\theoremstyle{plain}
\newtheorem{existadj}{Lemma}
\newtheorem{mult1}[existadj]{Lemma}
\newtheorem{mult2}[existadj]{Lemma}
\newtheorem{polyhedricity}[existadj]{Lemma}
\newtheorem{absecond}[existadj]{Lemma}
\newtheorem{density}[existadj]{Lemma}
\newtheorem{total}[existadj]{Lemma}
\newtheorem{totalbis}[existadj]{Lemma}
\newtheorem{derivative}[existadj]{Lemma}
\newtheorem{gron}[existadj]{Lemma}
\newtheorem{first}[existadj]{Theorem}
\newtheorem{second}[existadj]{Theorem}
\newtheorem{firsteasy}[existadj]{Theorem}
\newtheorem{muep}[existadj]{Lemma}
\newtheorem{firstreduced}[existadj]{Lemma}
\newtheorem{inj}[existadj]{Lemma}
\newtheorem{sufficient}[existadj]{Theorem}
\newtheorem{invers}[existadj]{Lemma}
\newtheorem{interest}[existadj]{Lemma}
\newtheorem{coupe}[existadj]{Lemma}
\newtheorem{bigresult}[existadj]{Lemma}
\newtheorem{raccords}[existadj]{Lemma}
\newtheorem{resol}[existadj]{Lemma}
\newtheorem{rempl}[existadj]{Lemma}
\newtheorem{rempl2}[existadj]{Lemma}
\newtheorem{out}[existadj]{Lemma}
\newtheorem{isombv}[existadj]{Lemma}
\newtheorem{ipp}[existadj]{Lemma}
\newtheorem{nogap}[existadj]{Corollary}
\theoremstyle{definition}
\newtheorem{lagrangemult}[existadj]{Definition}
\newtheorem{order}[existadj]{Definition}
\newtheorem{constraints}[existadj]{Definition}
\newtheorem{qualif}[existadj]{Definition}
\newtheorem{legendre}[existadj]{Definition}
\newtheorem{truncation}[existadj]{Definition}
\newtheorem{link}[existadj]{Remark}
\newtheorem{assump}[existadj]{Remark}
\newtheorem{exlemma}[existadj]{Remark}
\newtheorem{huu}[existadj]{Remark}
\newtheorem{reducible}[existadj]{Remark}
\newtheorem{muepsuite}[existadj]{Remark}
\newtheorem{finalrmk}[existadj]{Remark}
\newtheorem{qualifrem}[existadj]{Remark}
\numberwithin{equation}{section}
\thanks{Inria Saclay \& CMAP, Ecole Polytechnique, 91128 Palaiseau, France (frederic.bonnans@inria.fr)}%
\thanks{IMAS-CONICET \& Departamento de Matem\'atica, UBA, Buenos Aires, Argentina (csfvega@dm.uba.ar)}%
\thanks{Inria Saclay \& CMAP, Ecole Polytechnique, 91128 Palaiseau, France (xavier.dupuis@cmap.polytechnique.fr)}
\newcommand{\ud}{\mathrm{d}}
\newcommand{\e}{\varepsilon}
\newcommand{\p}{\varphi}
\newcommand{\R}{\mathbb R}
\def\D{\widehat{D}}
\def\itr{\item[$\triangleright$]}
\DeclareMathOperator*{\dist}{dist}
\DeclareMathOperator*{\supp}{supp}
\DeclareMathOperator*{\ri}{ri}
\def\uh{\hat{u}}
\def\xh{\hat{x}}
\def\bb{\bar{b}}
\def\hb{\bar{h}}
\def\ub{\bar{u}}
\def\vb{\bar{v}}
\def\wb{\bar{w}}
\def\yb{\bar{y}}
\def\zb{\bar{z}} 
\def\bt{\tilde{b}}
\def\ut{\tilde{u}}
\def\vt{\tilde{v}}
\def\yt{\tilde{y}}
\def\zt{\tilde{z}}
\def\cP{{\cal P}}
\def\A{\mathcal{A}}
\def\I{\mathcal{I}}
\def\J{\mathcal{J}}
\def\M{\mathcal{M}}
\def\SS{\mathcal{S}}
\def\T{\mathcal{T}}
\def\U{\mathcal{U}}
\def\V{\mathcal{V}}
\def\Y{\mathcal{Y}}
\def\Z{\mathcal{Z}}
\def\dist{\mathop{\rm dist}}
\def\supp{\mathop{\rm supp}}
\def\1B{{\bf  1}}
\newcommand{\NN}{\mathbb{N}}
\newcommand\be{\begin{equation}}
\newcommand\ee{\end{equation}}
\newcommand\ba{\begin{array}}
\newcommand\ea{\end{array}}
\newcommand{\bean}{\begin{eqnarray*}}
\newcommand{\eean}{\end{eqnarray*}}
\newcommand\bal{\begin{align}}
\newcommand\eal{\end{align}}
\begin{document}
%%
% \RRNo{7961}
\makeRR   % cas d'un rapport de recherche
%% \makeRT % cas d'un rapport technique.
%% a partir d'ici, chacun fait comme il le souhaite

\section{Introduction}

The dynamics in the optimal control problems we consider in this paper is given by an integral equation.
Such equations, sometimes called nonlinear Volterra integral equations, belong to the family of equations with memory and thus are found in many models.
Among the fields of application of these equations are population dynamics in biology and growth theory in economy:
see \cite{citeulike:8268303} or its translation in \cite{MR521933} for one of the first use of integral equations in ecology in 1927 by Volterra,
who contributed earlier to their theoretical study \cite{Volterra13};
in 1976, Kamien and Muller model the capital replacement problem by an optimal control problem with an integral state equation \cite{Kamien}.
First-order optimality conditions for such problems were known under the form of a maximum principle since Vinokurov's paper \cite{MR0218951} in 1967,
translated in 1969 \cite{MR0247559} and whose proof has been questionned by Neustadt and Warga \cite{MR0273489} in 1970.
Maximum principles have then been provided by Bakke \cite{MR0333893}, Carlson \cite{MR896645}, or more recently de la Vega \cite{MR2275355} for an optimal terminal time control problem.
First-order optimality conditions for control problems of the more general family of equations with memory are obtained by Carlier and Tahraoui \cite{MR2451792}.

None of the previously cited articles consider what we will call 'running state constraints'.
That is what Bonnans and de la Vega did in \cite{MR2739581}, where they provide Pontryagin's principle, i.e. first-order optimality conditions.
In this work we are particularly interested in second-order necessary conditions, in presence of running state constraints.
Such constraints drive to optimization problems with inequality constraints in the infinite-dimensional space of continuous functions.
Thus second-order necessary conditions on a so-called \textit{critical cone} will contain an extra term,
as it has been discovered in 1988 by Kawasaki \cite{MR941318} and generalized in 1990 by Cominetti \cite{MR1036588}, in an abstract setting.
It is possible to compute this extra term in the case of state constrained optimal control problems;
this is what is done by P\'ales and Zeidan \cite{MR2048167} or Bonnans and Hermant \cite{MR2421298,MR2504044} in the framework of ODEs.

Our strategy here is different and follows \cite{MR2779110}, with the differences that we work with integral equations
and that we add initial-final state constraints which lead to nonunique Lagrange multipliers.
The idea was already present in \cite{MR941318} and is closely related to the concept of extended polyhedricity \cite{MR1756264}:
the extra term mentioned above vanishes if we write second-order necessary conditions on a subset of the critical cone, the so-called \textit{radial critical cone}.
This motivates to introduce an auxiliary optimization problem, the \textit{reduced problem}, for which under some assumptions
the radial critical cone is dense in the critical cone.
Optimality conditions for the reduced problem are relevant for the original problem and the extra term now appears as the derivative of a new constraint in the reduced problem.
We will devote a lot of effort to the proof of the density result and we will mention a flaw in \cite{MR2779110} concerning this proof.
% Prior to the proof, we have to detail the notion of order of a running state constraint in the setting of integral equations,
% already defined in \cite{MR2739581}, and to investigate the related properties.

The paper is organized as follows.
We set the optimal control problem, define Lagrange multipliers and work on the notion of order of a running state constraint in our setting in section~\ref{ocp}.
The reduced problem is introduced in section~\ref{weak}, followed by first-order necessary conditions and second-order necessary conditions on the radial critical cone.
The main results are presented in section~\ref{strong}.
After some specific assumptions, we state and prove the technical lemma~\ref{bigresult} which is then used to strengthen the first-order necessary conditions already obtained
and to get the density result that we need. With this density result, we obtain second-order necessary conditions on the critical cone.
Second-order suficient conditions are also given in this section.
Some of the technical aspects are postponed in the appendix.

\paragraph{Notations}
We denote by $h_t$ the value of a function $h$ at time $t$ if $h$ depends only on $t$, and by $h_{i,t}$ its $i$th component if $h$ is vector-valued.
To avoid confusion we denote partial derivatives of a function $h$ of $(t,x)$ by $D_t h$ and $D_x h$.
We identify the dual space of $\mathbb R^n$ with the space $\mathbb R^{n*}$ of $n$-dimensional horizontal vectors.
Generally, we denote by $X^*$ the dual space of a topological vector space $X$.
We use $| \cdot |$ for both the Euclidean norm on finite-dimensional vector spaces and for the cardinal of finite sets,
$\| \cdot \|_s$ and $\| \cdot \|_{q,s}$ for the standard norms on the Lesbesgue spaces $L^s$ and the Sobolev spaces $W^{q,s}$, respectively.

\section{Optimal control of state constrained integral equations} \label{ocp}
\subsection{Setting}

We consider an optimal control problem with running and initial-final state constraints, of the following type:
\begin{align}
  &(P)&& \displaystyle \min_{(u,y)\in \U \times \Y} \int _0^T \ell(u_t,y_t) \ud t +\phi(y_0,y_T) \label{opt}\\
  &\text{subject to }&& \displaystyle y_t=y_0+\int_0^t f(t,s,u_s,y_s)\ud s, \quad t \in [0,T], \label{dyn} \\
  &&& g(y_t)\le 0 ,\quad t \in [0,T], \label{stateconst} \\
  &&& \Phi^E(y_0,y_T) =0 ,\label{ifcons} \\
  &&& \Phi^I(y_0,y_T)  \le 0 , \label{ifconst}
\end{align}
where
\begin{equation*}
  \U :=  L^\infty ([0,T]; \R^m), \quad \Y := W^{1,\infty}([0,T];\R^n)
\end{equation*}
are the \emph{control space} and the \emph{state space}, respectively.

% \paragraph{}
The data are $\ell \colon \R^m \times \R^n \rightarrow \R$,
$\phi \colon \R^n \times \R^n \rightarrow \R$,
$f \colon \R \times \R \times \R^m \times \R^n \rightarrow \R^n$,
$g \colon \R^n \rightarrow \R^r$,
$\Phi^E \colon \R^n \times \R^n \rightarrow \R^{s_E}$,
$\Phi^I \colon \R^n \times \R^n \rightarrow \R^{s_I}$ and $T>0$.
We set $\tau$ as the symbol for the first variable of $f$. Observe that if $D_\tau f = 0$, we recover an optimal control problem of a state constrained ODE.
We make the following assumption:
\begin{description}
 \item[(A0)] $\ell, \phi, f, g,\Phi^E, \Phi^I$ are of class $C^\infty$ and $f$ is Lipschitz.
\end{description}

We call \emph{trajectory} a pair $(u,y)\in \U \times \Y$ which satisfies the \emph{state equation}~\eqref{dyn}.
Under assumption \textbf{(A0)} it can be shown by standard contraction arguments that for any $(u,y_0) \in \U \times \R^n$,
the state equation~\eqref{dyn} has a unique solution $y$ in $\Y$, denoted by $y[u,y_0]$.
Moreover, the map $\Gamma \colon \U \times \R^n \rightarrow \Y$ defined by $\Gamma(u,y_0):=y[u,y_0]$ is of class $C^\infty$.

\subsection{Lagrange multipliers} \label{Lagrange multipliers}

The dual space of the space of vector-valued continuous functions $C\left( [0,T]; \R^r \right)$ is the space of finite vector-valued Radon measures
$\M\left([0,T]; \R^{r*} \right)$,
under the pairing
\begin{equation*}
 \left\langle \mu,h \right\rangle := \int_{[0,T]} \ud \mu_t h_t= \sum_{1\le i \le r} \int_{[0,T]}   h_{i,t}\ud \mu_{i,t.}
\end{equation*}
We define $BV\left([0,T];\R^{n*}\right)$, the space of vector-valued functions of bounded variations, as follows:
let $I$ be an open set which contains $[0,T]$; then
\begin{equation*} 
  BV\left([0,T];\R^{n*}\right) : = \left\{ h \in L^1(I;\R^{n*}) \, : \, Dh \in \M\left( I; \R^{n*}\right) , \, \supp(Dh) \subset [0,T] \right\} ,
\end{equation*}
 where $Dh$ is the distributional derivative of $h$; if $h$ is of bounded variations, we denote it by $\ud h$.
For $h \in  BV\left([0,T];\R^{n*}\right)$, there exists $h_{0_-},h_{T_+}\in \R^{n*}$ such that
\begin{equation} \label{moinsplus}
 \begin{array}{ll}
  h = h_{0_-} & \text{a.e. on } (- \infty,0) \cap I, \\
  h = h_{T_+} & \text{a.e. on } (T,+\infty) \cap I .
 \end{array}
\end{equation}
Conversely, we can identify any measure $\mu \in \M\left([0,T]; \R^{r*} \right)$ with the derivative of a function of bounded variations, denoted again by $\mu$,
such that $\mu_{T_+}=0$.
This motivates the notation $\ud \mu$ for any measure in the sequel, setting implicitly $\mu_{T_+}=0$.
See appendix~\ref{BVfunc} for more details.

\paragraph{}
Let
\begin{equation*}
 \M : = \M\left([0,T]; \R^{r*} \right) , \quad
 \cP : =  BV\left([0,T];\R^{n*}\right). 
\end{equation*}
We define the \emph{Hamiltonian} $H \colon \left[ \cP \right] \times \R \times \R^m \times \R^n \rightarrow \R$ by
\begin{equation} \label{Ham}
 H[p](t,u,y):= \ell(u,y)+p_t f(t,t,u,y) + \int_t^T p_s D_\tau f(s,t,u,y) \ud s 
\end{equation}
and the \emph{end points Lagrangian} $\Phi \colon \left[ \R^{s*} \right] \times \R^n \times \R^n \rightarrow \R$ by
\begin{equation} \label{endLag}
 \Phi[\Psi](y_1,y_2):= \phi(y_1,y_2)+\Psi \Phi(y_1,y_2)
\end{equation}
where $s:=s_E + s_I$ and $\Phi:=(\Phi^E,\Phi^I)$. We also denote $K:=\{ 0 \}_{s_E}\times \left(\R_-\right)^{s_I}$,
so that \eqref{ifcons}-\eqref{ifconst} can be rewritten as $\Phi(y_0,y_T) \in K$.
Given a trajectory $(u,y)$ and $(\ud \eta,\Psi) \in \M \times \R^{s*}$, the \emph{adjoint state} $p$,
whenever it exists, is defined as the solution in $\cP$ of
\begin{equation} \label{adjdyn}
 \left\{ \begin{array}{l}
          -\ud p_t = D_y H[p](t,u_t,y_t)\ud t +  \ud \eta_t g'(y_t),\\
          (-p_{0_-},p_{T_+}) = D\Phi[\Psi](y_0,y_T).
         \end{array} \right.
\end{equation}
Note that $\ud \eta_t g'(y_t) = \sum_{i=1}^r \ud \eta_{i,t} g_i'(y_t)$.
The adjoint state does not exist in general, but when it does it is unique.
More precisely, we have:

\begin{existadj} \label{existadj}
 There exists  a unique solution in $\cP$ of the adjoint state equation with final condition only (i.e. without initial condition):
 \begin{equation} \label{adjfinal}
 \left\{ \begin{array}{l}
          -\ud p_t = D_y H[p](t,u_t,y_t)\ud t + \ud \eta_t g'(y_t) ,\\
          \ \, p_{T_+} = D_{y_2}\Phi[\Psi](y_0,y_T)  .
         \end{array} \right.
\end{equation}
\end{existadj}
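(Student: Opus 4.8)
\noindent The plan is to recast the final-condition adjoint equation \eqref{adjfinal} as a \emph{linear Volterra integral equation run backwards in time} and to solve it by a contraction argument; the only feature absent in the ODE case is the memory term $\int_t^T p_s D_\tau f(s,t,u_t,y_t)\,\ud s$ carried by $D_y H[p]$.

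First I would pass to integrated form. With the $BV$/measure conventions recalled above and detailed in appendix~\ref{BVfunc}, a function $p\in\cP$ solves \eqref{adjfinal} if and only if $\supp(Dp)\subset[0,T]$ and, for a.e.\ $t\in[0,T]$,
\[
 p_t = D_{y_2}\Phi[\Psi](y_0,y_T) + \int_{(t,T]} D_y H[p](\sigma,u_\sigma,y_\sigma)\,\ud\sigma + \int_{(t,T]} \ud\eta_\sigma\, g'(y_\sigma).
\]
Expanding $D_y H[p]$ from \eqref{Ham} (and noting that pointwise values of $p$ do not matter under a Lebesgue integral), the $p$-free terms $D_{y_2}\Phi[\Psi](y_0,y_T)$, $\int_{(t,T]} D_y\ell(u_\sigma,y_\sigma)\,\ud\sigma$ and $\int_{(t,T]}\ud\eta_\sigma\, g'(y_\sigma)$ assemble into a fixed element $\bar p\in\cP$ (respectively: constant; Lipschitz, since $D_y\ell$ is bounded along the bounded trajectory; and $BV$, since $\eta\in\M$ and $g'(y_\cdot)$ is bounded and continuous). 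The $p$-dependent terms are $\int_{(t,T]} p_\sigma\, D_y f(\sigma,\sigma,u_\sigma,y_\sigma)\,\ud\sigma$ and $\int_{(t,T]}\big(\int_\sigma^T p_s\, D_yD_\tau f(s,\sigma,u_\sigma,y_\sigma)\,\ud s\big)\,\ud\sigma$; by Fubini the second equals $\int_{(t,T]} p_s\big(\int_t^s D_yD_\tau f(s,\sigma,u_\sigma,y_\sigma)\,\ud\sigma\big)\ud s$. Hence the equation takes the form $p=\T p$ with
\[
 (\T p)_t := \bar p_t + \int_{(t,T]} p_s\,\Theta(s,t)\,\ud s ,
\]
where $\Theta$ is measurable on $\{0\le t\le s\le T\}$ and $\|\Theta\|_\infty\le C_0$, $C_0$ depending only on $T$ and on the sup-norms of $D_y f$ and $D_yD_\tau f$ along the trajectory, both finite by \textbf{(A0)}.

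Next I would solve $p=\T p$ in $L^\infty([0,T];\R^{n*})$, a space containing $\cP$. Endowing it with the weighted norm $\|p\|_\lambda:=\esssup_{t\in[0,T]} e^{-\lambda(T-t)}|p_t|$, the affine map $\T$ satisfies $\|\T p-\T q\|_\lambda\le (C_0/\lambda)\,\|p-q\|_\lambda$, hence is a contraction once $\lambda>C_0$ (equivalently, $\T^k$ has Lipschitz constant $(C_0T)^k/k!\to 0$, a Gronwall-type estimate). By Banach's fixed point theorem $\T$ has a unique fixed point $p^\ast\in L^\infty$. Applying $\T$ once more, $p^\ast=\bar p+\big(t\mapsto\int_{(t,T]}p^\ast_s\Theta(s,t)\,\ud s\big)$; the second summand is Lipschitz in $t$ --- the moving endpoint contributes $|p^\ast_t\,\Theta(t,t)|$ and the $t$-dependence of $\Theta(s,t)=D_yf(s,s,u_s,y_s)+\int_t^sD_yD_\tau f(s,\sigma,u_\sigma,y_\sigma)\,\ud\sigma$ contributes $\int_t^T|p^\ast_s|\,|D_yD_\tau f|\,\ud s$, both bounded --- so it lies in $W^{1,\infty}([0,T];\R^{n*})$ and, extended by its endpoint limits, in $\cP$. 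Since $\bar p\in\cP$ as well, $p^\ast\in\cP$; it solves \eqref{adjfinal} by construction, with $p^\ast_{T_+}=D_{y_2}\Phi[\Psi](y_0,y_T)$, and uniqueness in $\cP$ is immediate because any solution there is in particular a fixed point of $\T$ in $L^\infty$.

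The one genuinely delicate point is the $BV$/measure bookkeeping: the precise equivalence between the distributional equation \eqref{adjfinal} and its integrated form, the treatment of a possible atom of $\eta$ at $T$, and the endpoint values $p_{0_-},p_{T_+}$. Once these are settled through the conventions of appendix~\ref{BVfunc}, the memory term collapses by Fubini to an ordinary bounded Volterra kernel and everything else is the standard linear fixed-point argument.
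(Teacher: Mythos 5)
Your proposal is correct and takes essentially the same route as the paper: both pass to the integrated (backward Volterra) form of \eqref{adjfinal}, absorb the memory term by a Fubini-type exchange of the iterated integrals, and conclude with Banach's fixed point theorem for an exponentially weighted norm. The only differences are minor: the paper contracts in $L^1(0,T;\R^{n*})$ with the norm $\|v\|_{1,K}=\|t\mapsto e^{-K(T-t)}v(t)\|_1$ rather than in $L^\infty$ with a weighted sup norm, and it leaves implicit the regularity bootstrap showing the fixed point actually lies in $\cP$, which you spell out explicitly.
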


\begin{proof}
The contraction argument is given in appendix~\ref{BVfunc}.
\end{proof}

We can now define Lagrange multipliers for optimal control problems in our setting:
\begin{lagrangemult} \label{lagrangemult}
 $(\ud \eta, \Psi,p) \in \M \times \R^{s*} \times \cP$ is a \emph{Lagrange multiplier} associated with $(\bar u,\bar y)$ if
\begin{align}
 & p \text{ is the adjoint state associated with } (\bar u,\bar y,\ud \eta,\Psi) ,&& \label{multadj}\\
 & \ud \eta \ge 0 , \quad g(\bar y) \le 0, \quad \int_{[0,T]} \ud \eta_t g(\bar y_t)=0 , \label{multeta}\\
 & \Psi \in N_K\left( \Phi(\bar y_0,\bar y_T)\right) ,\label{multpsi}\\
 & D_u H[p](t,\bar u_t,\bar y_t)=0 \text{ for a.a. } t \in [0,T] .\label{multstation}
\end{align}
\end{lagrangemult}

\subsection{Linearized state equation} \label{linearized state equation}
For $s \in [1,\infty]$, let
\begin{equation*}
 \V_s :=  L^s([0,T];\R^m) , \quad
 \Z_s:= W^{1,s}([0,T];\R^n) .
\end{equation*}
Given a trajectory $(u,y)$ and $(v,z_0) \in \V_s \times \R^n$, we consider the \emph{linearized state equation} in $\Z_s$:
\begin{equation} \label{lindyn}
 z_t =  z_0 + \int_0^t D_{(u,y)} f(t,s,u_s,y_s)(v_s,z_s) \ud s.
\end{equation}
It is easily shown that there exists a unique solution $z \in \Z_s$ of~\eqref{lindyn},
called the \emph{linearized state} associated with the trajectory $(u,y)$ and the direction $(v,z_0)$,
and denoted by $z[v,z_0]$ (keeping in mind the nominal trajectory).

\begin{gron} \label{estimsobo}
There exists $C>0$ and $C_s > 0$ for any $s \in [1,\infty ]$
(depending on $(u,y)$) such that, for all $(v,z_0) \in \V_s \times \R^n$ and all $t \in [0,T]$,
 \begin{align}
& | z[v,z_0]_t | \le C  \left(  |z_0 |+ \int_0^t |v_s| \ud s \right), \label{ll1}\\
&  \| z[v,z_0] \|_{1,s} \le C_s  \left(|z_0 |+  \|v\|_s   \right) . \label{ll2}
 \end{align}
\end{gron}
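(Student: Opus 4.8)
The plan is a standard Grönwall argument combined with Hölder's inequality, the only preliminary being a boundedness observation for the derivatives of $f$ along the fixed nominal trajectory $(u,y)$. Since $(u,y)\in\U\times\Y=L^\infty\times W^{1,\infty}$, the pair $(u_s,y_s)$ lies, for a.a.\ $s$, in a fixed compact set $Q\subset\R^m\times\R^n$ (a closed ball times the image $y([0,T])$), while $(\tau,s)$ ranges over the compact square $[0,T]^2$; hence by \textbf{(A0)} there is a constant $M=M(u,y)>0$ bounding $|D_uf|$, $|D_yf|$, $|D_\tau D_uf|$ and $|D_\tau D_yf|$ on $[0,T]^2\times Q$. (For \eqref{ll1} alone the Lipschitz part of \textbf{(A0)} already bounds $|D_{(u,y)}f|$ independently of the trajectory, but this refinement is not needed.)

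For \eqref{ll1}: writing $D_{(u,y)}f(t,s,u_s,y_s)(v_s,z_s)=D_uf\,v_s+D_yf\,z_s$ and taking absolute values in \eqref{lindyn} gives
\[
 |z_t|\le\Bigl(|z_0|+M\!\int_0^t|v_s|\,\ud s\Bigr)+M\!\int_0^t|z_s|\,\ud s .
\]
The bracketed term is nondecreasing in $t$, so Grönwall's lemma yields $|z_t|\le e^{Mt}\bigl(|z_0|+M\int_0^t|v_s|\,\ud s\bigr)$, i.e.\ \eqref{ll1} with $C:=e^{MT}\max(1,M)$, which does not depend on $s$.

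For \eqref{ll2}: from \eqref{ll1} and Hölder's inequality, $\sup_t|z_t|\le C\bigl(|z_0|+T^{1-1/s}\|v\|_s\bigr)$, hence $\|z\|_s\le T^{1/s}\sup_t|z_t|\le C_s'\bigl(|z_0|+\|v\|_s\bigr)$. Differentiating \eqref{lindyn} in $t$ (the outer $t$ appears only as the first argument $\tau$ of $f$, and the integrand and its $\tau$-derivative are bounded on $[0,T]^2\times Q$, so the Leibniz rule applies for a.a.\ $t$) gives
\[
 \dot z_t=D_{(u,y)}f(t,t,u_t,y_t)(v_t,z_t)+\int_0^t D_\tau\bigl[D_{(u,y)}f\bigr](t,s,u_s,y_s)(v_s,z_s)\,\ud s ,
\]
whence $|\dot z_t|\le M\bigl(|v_t|+|z_t|\bigr)+M\int_0^T\bigl(|v_s|+|z_s|\bigr)\,\ud s$. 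Taking $L^s$-norms and using the bound on $\|z\|_s$ together with Hölder's inequality on the integral term gives $\|\dot z\|_s\le C_s''\bigl(|z_0|+\|v\|_s\bigr)$; adding this to the estimate on $\|z\|_s$ yields \eqref{ll2} (with $C_s$ depending also on the equivalent norm chosen on $W^{1,s}$). The case $s=\infty$ is the same, with $T^{1-1/s}=T$ and $T^{1/s}=1$.

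The manipulations are routine; the only points deserving care are the uniform boundedness of the derivatives of $f$ along the trajectory — where compactness of $[0,T]^2\times Q$ and the $C^\infty$ regularity of \textbf{(A0)} are used — and, for \eqref{ll2}, the legitimacy of differentiating under the integral sign, which rests on the same boundedness.
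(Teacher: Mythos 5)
Your proof is correct and follows exactly the route the paper intends: \eqref{ll1} by bounding $D_{(u,y)}f$ along the (essentially bounded) nominal trajectory and applying Gronwall, then \eqref{ll2} by combining \eqref{ll1} with H\"older and the expression for $\dot z_t$ obtained by differentiating \eqref{lindyn} in $t$. The paper states this in one line ("\eqref{ll1} is an application of Gronwall's lemma and \eqref{ll2} is a consequence of \eqref{ll1}"), and your write-up simply supplies the routine details, including the correct observation that the mixed derivatives $D_\tau D_{(u,y)}f$ need the compactness argument rather than the global Lipschitz bound.
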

\begin{proof}
 \eqref{ll1} is an application of Gronwall's lemma and \eqref{ll2} is a consequence of \eqref{ll1}.
\end{proof}
Observe that for $s=\infty$, the linearized state equation arises naturally:
let $(u,y_0) \in \U \times \R^n$, $y:=\Gamma(u,y_0) \in \Y$.
We consider the linearized state associated with the trajectory $(u,y)$ and a direction $(v,z_0) \in \U \times \R^n$. Then
\begin{equation}
 z[v,z_0 ] = D\Gamma(u,y_0)(v,z_0) .
\end{equation}
Similarly we can define the \emph{second-order linearized state}:
\begin{equation} \label{secondlin}
 z^2[v,z_0] :=  D^2\Gamma(u,y_0)(v,z_0)^2 .
\end{equation}
Note that $z^2[v,z_0]$ is the unique solution in $\Y$ of
\begin{equation}
 z^2_t = \int_0^t \left( D_y f(t,s,u_s,y_s)z^2_s + D^2_{(u,y)^2}f(t,s,u_s,y_s)(v_s,z[v,z_0]_s)^2 \right) \ud s .
\end{equation}

\subsection{Running state constraints} \label{running state constraints}
The running state constraints $g_i$, $i=1,\ldots,r$, are considered along trajectories $(u,y)$.
They produce functions of one variable, $t \mapsto g_i(y_t) $, which belong \textit{a priori} to $W^{1,\infty}([0,T])$ and satisfy
\begin{equation} \label{example}
 \frac{\ud}{\ud t}g_i(y_t) = g_i'(y_t) \left( f(t,t,u_t,y_t) + \int_0^t D_\tau f(t,s,u_s,y_s) \ud s \right).
\end{equation}
There are two parts in this derivative:
\begin{itemize}
 \itr $t \mapsto g_i'(y_t) f(t,t,u_t,y_t)$, where $u$ appears pointwisely.
 \itr $t \mapsto g_i'(y_t) \int_0^t D_\tau f(t,s,u_s,y_s) \ud s$, where $u$ appears in an integral.
\end{itemize}
Below we will distinguish these two behaviors and set $\tilde u$ as the symbol for the pointwise variable, $u$ for the integral variable
(similarly for $y$). If there is no dependance on $\tilde u$, one can differentiate again \eqref{example} w.r.t. $t$.
This motivates the definition of a notion of total derivative that always ``forget'' the dependence on $\tilde u$.
Let us do that formally.

\paragraph{}
First we need a set which is stable by operations such as in \eqref{example}, so that it will contain the derivatives of any order.
It is also of interest to know how the functions we consider depend on $(u,y) \in \U \times \Y$.
To answer this double issue, we define the following commutative ring:
\begin{equation}
\mathcal S:= \left\lbrace h \, :\, h(t,\tilde u,\tilde y,u,y)=    \label{s} 
\sum_\alpha  a_\alpha(t,\tilde u,\tilde y) \prod_\beta \int_0^t b_{\alpha,\beta}(t,s,u_s,y_s) \ud s  \right\rbrace , 
\end{equation}
where $(t,\tilde u,\tilde y,u,y) \in \R \times \R^m \times \R^n \times \U \times \Y$,
the $a_\alpha$, $b_{\alpha,\beta}$ are real functions of class $C^\infty$,
the sum and the products are finite and an empty product is equal to $1$.
% \begin{flalign*}
%  \text{where: }& (t,\tilde u,\tilde y,u,y) \in \R \times \R^m \times \R^n \times \U \times \Y , &\\
% & a_\alpha, b_{\alpha,\beta} \text{ are real functions of class } C^\infty, \\
% & \text{the sum and the products are finite}, \\
% & \text{an empty product is equal to } 1.
% \end{flalign*}
The following is straightforward:
\begin{interest} \label{interest}
 Let $h \in \SS$, $(u,y) \in \U \times \Y$.
There exists $C>0$ such that, for a.a. $t \in [0,T]$ and for all $(\vt,\zt,v,z)\in \R^m \times \R^n \times \U \times \Y$,
\begin{equation}
 \left| D_{(\ut,\yt,u,y)} h(t,u_t,y_t,u,y) (\vt,\zt,v,z) \right| \le C 
 \left( |\vt| +|\zt| + \int_0^t \left( |v_s|+|z_s|\right) \ud s \right) .
\end{equation}

\end{interest}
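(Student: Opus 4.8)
The statement is an exercise in the chain and product rules, so the plan is to differentiate the explicit representation \eqref{s} of $h \in \SS$ term by term and to bound each resulting factor uniformly in $t$. The one structural fact to record first is that, since $u \in \U = L^\infty([0,T];\R^m)$ and $y \in \Y = W^{1,\infty}([0,T];\R^n) \hookrightarrow C([0,T];\R^n)$, the set $\{(s,u_s,y_s) : s \in [0,T]\}$ is, up to a null set, contained in a fixed compact subset of $\R \times \R^m \times \R^n$ (namely $[0,T] \times \overline{B_{\R^m}(0,\|u\|_\infty)} \times y([0,T])$). Consequently each of the finitely many $C^\infty$ data $a_\alpha$, $b_{\alpha,\beta}$ appearing in the representation of $h$, together with their first-order partial derivatives $D_{(\ut,\yt)} a_\alpha$ and $D_{(u,y)} b_{\alpha,\beta}$, is bounded on the relevant compact set; call $C_0$ a common bound. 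This $C_0$, and hence the final constant $C$, depends only on $(u,y)$ and on $h$.

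Next I would write out $D_{(\ut,\yt,u,y)} h(t,u_t,y_t,u,y)(\vt,\zt,v,z)$. By the Leibniz rule it equals a finite sum over $\alpha$ of terms of two kinds: first, $D_{(\ut,\yt)} a_\alpha(t,u_t,y_t)(\vt,\zt)\cdot \prod_\beta \int_0^t b_{\alpha,\beta}(t,s,u_s,y_s)\,\ud s$, coming from differentiating the pointwise factor $a_\alpha$; and second, $a_\alpha(t,u_t,y_t)\cdot \sum_{\beta_0}\big(\prod_{\beta\neq\beta_0}\int_0^t b_{\alpha,\beta}(t,s,u_s,y_s)\,\ud s\big)\cdot \int_0^t D_{(u,y)} b_{\alpha,\beta_0}(t,s,u_s,y_s)(v_s,z_s)\,\ud s$, coming from differentiating one of the integral factors. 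The differentiability of these integral functionals on $\U \times \Y$ under \textbf{(A0)}, and the validity of differentiating under the integral sign, are standard (this is essentially the same computation used to show that $\Gamma$ is $C^\infty$).

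Finally I would bound each factor using $C_0$: every factor $\int_0^t b_{\alpha,\beta}(t,s,u_s,y_s)\,\ud s$ is dominated in absolute value by $T C_0$, and $|a_\alpha(t,u_t,y_t)| \le C_0$; the pointwise-derivative factor satisfies $|D_{(\ut,\yt)} a_\alpha(t,u_t,y_t)(\vt,\zt)| \le C_0(|\vt| + |\zt|)$; and the integral-derivative factor satisfies $\big|\int_0^t D_{(u,y)} b_{\alpha,\beta_0}(t,s,u_s,y_s)(v_s,z_s)\,\ud s\big| \le C_0\int_0^t(|v_s| + |z_s|)\,\ud s$. Since the number of indices $\alpha$ and $\beta_0$ is finite, summing these estimates yields $|D_{(\ut,\yt,u,y)} h(t,u_t,y_t,u,y)(\vt,\zt,v,z)| \le C\big(|\vt| + |\zt| + \int_0^t(|v_s| + |z_s|)\,\ud s\big)$ for a.a. $t$, with $C$ depending only on $(u,y)$.

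There is no genuine obstacle here — the result is flagged as straightforward — and the only points deserving care are that the bounds hold only for a.a. $t$ (since $u$ is merely an $L^\infty$ class) but uniformly in $t$, which is exactly what the boundedness of $\{(s,u_s,y_s)\}$ delivers, and that $C$ is independent of the direction $(\vt,\zt,v,z)$, which is automatic because every bound above is linear in that direction.
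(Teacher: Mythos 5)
Your proof is correct and is exactly the ``straightforward'' argument the paper has in mind (the paper states the lemma without proof): differentiate the representation \eqref{s} by the Leibniz rule and bound $a_\alpha$, $b_{\alpha,\beta}$ and their first derivatives on the compact set containing $\{(s,u_s,y_s)\}$ up to a null set, which gives the stated estimate with $C$ depending only on $h$ and $(u,y)$. The points you single out for care --- the a.e.\ qualification coming from $u\in L^\infty$ and the linearity in the direction $(\vt,\zt,v,z)$ --- are precisely the right ones.
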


% \paragraph{}
Next we define the derivation $D^{(1)} \colon \mathcal S \longrightarrow \mathcal S$ as follows
(recall that we set $\tau$ as the symbol for the first variable of $f$ or $b$):
\begin{enumerate}
 \item for $h \colon (t,\tilde u,\tilde y) \in \R \times \R^m \times \R^n \mapsto a(t,\tilde u,\tilde y) \in \R$,
 \begin{multline*}
 \left( D^{(1)}h\right) (t,\tilde u,\tilde y,u,y) := D_t a(t,\tilde u,\tilde y)\\
 + D_{\tilde y}a(t,\tilde u,\tilde y)\left(f(t,t,\tilde u,\tilde y)+\int_0^t D_\tau f (t,s,u_s,y_s) \ud s \right).
 \end{multline*}
 
 \item for $h \colon (t,u,y) \in \R \times \U \times \Y \mapsto \int_0^t b(t,s,u_s,y_s) \ud s \in \R$,
 \begin{equation*} 
 \left( D^{(1)}h\right) (t,\tilde u,\tilde y,u,y):= b(t,t,\tilde u,\tilde y) + \int_0^t D_\tau b(t,s,u_s,y_s) \ud s.
 \end{equation*} 
 
 \item for any $h_1,h_2 \in \mathcal S$,
 \begin{gather*}
  \left( D^{(1)}(h_1+h_2)\right) =\left( D^{(1)}h_1 \right)+\left( D^{(1)}h_2\right) ,\\
  \left( D^{(1)}(h_1 h_2)\right) =\left( D^{(1)}h_1 \right)h_2+h_1\left( D^{(1)}h_2\right).
 \end{gather*}
\end{enumerate}
It is clear that $D^{(1)}h \in \SS$ for any $h \in \SS$.
The following formula, which is easily checked on $h=a(t,\tilde u,\tilde y)$ and $h=\int_0^t b(t,s,u_s,y_s) \ud s$, will be used for any $h \in \SS$:
\begin{multline} \label{total}
 \left(D^{(1)}h\right)(t,u_t,y_t,u,y) = D_t h(t,u_t,y_t,u,y)+D_{\tilde y}h(t,u_t,y_t,u,y)f(t,t,u_t,y_t) \\
 + D_{\tilde y}h(t,u_t,y_t,u,y) \int_0^t D_\tau f (t,s,u_s,y_s)\ud s .
\end{multline}

\paragraph{}
Let us now highlight two important properties of $D^{(1)}$. First, it is a notion of total derivative:
\begin{total} \label{lemma:total}
Let $h \in \mathcal S$ be such that $D_{\tilde u}h \equiv 0$, $(u,y) \in \U \times \Y$ be a trajectory and
\begin{equation}
  \p \colon t  \mapsto h(t,u_t,y_t,u,y).
\end{equation}
 Then $\varphi \in W^{1,\infty}([0,T])$
% , $\psi \in W^{1,s}([0,T])$ and:
and
\begin{equation}
 \frac{\ud \varphi}{\ud t}(t)=\left( D^{(1)}h\right) (t,u_t,y_t,u,y).\label{form1}
\end{equation}
\end{total}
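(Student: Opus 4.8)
The plan is to reduce $h$ to the two generators of the ring $\mathcal S$ and to exploit that $D^{(1)}$ obeys the same linearity and Leibniz rules as ordinary differentiation of products.

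First I would eliminate the pointwise control variable. Since $D_{\tilde u}h\equiv 0$, the smooth function $h$ does not depend on its $\tilde u$ argument, so in a representation as in \eqref{s} one may replace each $a_\alpha(t,\tilde u,\tilde y)$ by $a_\alpha(t,0,\tilde y)$; this leaves $h$ unchanged as a function, hence by \eqref{total} it leaves $D^{(1)}h$ unchanged along trajectories. This step is essential: a term $a_\alpha(t,u_t,y_t)$ with genuine $\tilde u$-dependence need not even be continuous, because $u$ is merely in $L^\infty$. After the reduction $\varphi(t)=\sum_\alpha a_\alpha(t,y_t)\prod_\beta j_{\alpha,\beta}(t)$ with $j_{\alpha,\beta}(t):=\int_0^t b_{\alpha,\beta}(t,s,u_s,y_s)\,\ud s$, and it suffices to treat each type of factor and then recombine.

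The delicate point, and the step I expect to be the main obstacle, is $j(t)=\int_0^t b(t,s,u_s,y_s)\,\ud s$, where $t$ occurs both in the upper limit and inside $b$ while $u$ is only in $L^\infty$, which rules out any pointwise Lebesgue-point argument. I would handle it with a Fubini trick: writing $b(t,s,u_s,y_s)=b(s,s,u_s,y_s)+\int_s^t D_\tau b(\sigma,s,u_s,y_s)\,\ud\sigma$, substituting into $j(t)$, and exchanging the order of integration over $\{0\le s\le\sigma\le t\}$, one gets $j(t)=\int_0^t\bigl[b(s,s,u_s,y_s)+\int_0^s D_\tau b(s,\sigma,u_\sigma,y_\sigma)\,\ud\sigma\bigr]\ud s$. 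The bracket is bounded on $[0,T]$ (as $b$ and $D_\tau b$ are $C^\infty$ and $u,y$ are bounded), hence lies in $L^\infty\subset L^1$, so $j\in W^{1,\infty}([0,T])$ with $\dot j(t)$ equal a.e. to that bracket, which is precisely $(D^{(1)}j)(t,u_t,y_t,u,y)$ by the second item in the definition of $D^{(1)}$. Applying the same computation to $b=f$ in \eqref{dyn} also gives $\dot y_t=f(t,t,u_t,y_t)+\int_0^t D_\tau f(t,s,u_s,y_s)\,\ud s$ for a.a. $t$.

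For the other type of factor, $t\mapsto a(t,y_t)$ with $a$ smooth is Lipschitz because $y\in\Y=W^{1,\infty}$, hence in $W^{1,\infty}([0,T])$, with $\frac{\ud}{\ud t}a(t,y_t)=D_t a(t,y_t)+D_{\tilde y}a(t,y_t)\dot y_t$ a.e.; inserting the expression for $\dot y_t$ identifies this with $(D^{(1)}a)(t,u_t,y_t,u,y)$ by the first item in the definition of $D^{(1)}$. Finally, since products of bounded Lipschitz functions on $[0,T]$ are bounded Lipschitz, the finite sums and products forming $\varphi$ stay in $W^{1,\infty}([0,T])$ and $\dot\varphi$ is given a.e. by the usual sum and product rules; because $D^{(1)}$ is defined on $\mathcal S$ by exactly those rules (third item), comparing the two expressions term by term and using the two building-block identities above to match each factor with the corresponding $D^{(1)}$ along the trajectory yields $\dot\varphi(t)=(D^{(1)}h)(t,u_t,y_t,u,y)$ a.e., which is \eqref{form1}.
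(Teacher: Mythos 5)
Your proof is correct and follows essentially the same route as the paper: use $D_{\tilde u}h\equiv 0$ to freeze the pointwise control argument at a constant, then differentiate the representation \eqref{s} along the trajectory and insert the expression of $\dot y_t$ to identify the result with $\left(D^{(1)}h\right)(t,u_t,y_t,u,y)$. The only difference is that you make explicit, via the Fubini rewriting, why each factor $t\mapsto\int_0^t b(t,s,u_s,y_s)\,\ud s$ lies in $W^{1,\infty}$ and what its a.e.\ derivative is, a step the paper's proof leaves implicit.
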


\begin{proof}
We write $h$ as in~\eqref{s}. If $D_{\tilde u} h \equiv 0$, then for any $u_0 \in \R^m$,
\begin{align}
 \p(t) & = h(t,u_0,y_t,u,y) \label{l1} \\
 & = \sum_\alpha  a_\alpha(t,u_0,y_t) \prod_\beta \int_0^t b_{\alpha,\beta}(t,s,u_s,y_s) \ud s .\label{l2}
\end{align}
By~\eqref{l2}, $\p \in W^{1,\infty}([0,T])$. And by~\eqref{l1},
\begin{align*}
 \frac{\ud \p}{\ud t}(t) & = D_t h(t,u_0,y_t,u,y)+D_{\tilde y} h(t,u_0,y_t,u,y) \dot y_t \\
 & = D_t h(t,u_t,y_t,u,y)+D_{\tilde y} h(t,u_t,y_t,u,y) \dot y_t
\end{align*}
since $D_{\tilde u} D_t h \equiv D_t D_{\tilde u} h \equiv 0$ and $D_{\tilde u} D_{\tilde y}h \equiv 0$.
Using the expression of $\dot y_t$ and \eqref{total}, we recognize \eqref{form1}.
\end{proof}

Second, it satisfies a principle of commutation with the linearization:
\begin{totalbis} \label{lemma:totalbis}
 Let $h, (u,y)$ be as in lemma~\ref{lemma:total}, $(v,z_0) \in \V_s \times \R^n$, $z:=z[v,z_0] \in \Z_s$ for some $s \in [1,\infty]$ and
\begin{equation}
 \psi \colon t  \mapsto D_{(\tilde y,u,y)}h (t,u_t,y_t,u,y)(z_t,v,z).
\end{equation}
Then  $\psi \in W^{1,s}([0,T])$ and
\begin{equation}
 \frac{\ud \psi}{\ud t}(t) = D_{(\tilde u,\tilde y,u,y)} \left[ \left(D^{(1)}h\right)(t,u_t, y_t,u,y)\right] (v_t,z_t,v,z) . \label{form2}
\end{equation}
\end{totalbis}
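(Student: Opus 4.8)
I would prove the formula first in the case $s=\infty$, where it follows from Lemma~\ref{lemma:total} by differentiating a one--parameter family of perturbed trajectories at the nominal one, and then deduce the general case $s\in[1,\infty)$ by a density argument based on Lemma~\ref{estimsobo} and Lemma~\ref{interest}.

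\textbf{Case $s=\infty$.}
Since $v\in\V_\infty=\U$, I would introduce the perturbed trajectories $(u^\theta,y^\theta):=\bigl(u+\theta v,\ \Gamma(u+\theta v,y_0+\theta z_0)\bigr)$ for $\theta\in\R$, so that $(u^0,y^0)=(u,y)$. As $\Gamma$ is of class $C^\infty$ and $z=z[v,z_0]=D\Gamma(u,y_0)(v,z_0)$, for each fixed $t\in[0,T]$ the scalar function $\theta\mapsto h(t,u^\theta_t,y^\theta_t,u^\theta,y^\theta)$ is differentiable at $0$ with derivative $D_{\tilde u}h(t,u_t,y_t,u,y)v_t+\psi(t)$, the first term vanishing by the hypothesis $D_{\tilde u}h\equiv0$; likewise the derivative at $0$ of $\theta\mapsto(D^{(1)}h)(t,u^\theta_t,y^\theta_t,u^\theta,y^\theta)$ equals the right-hand side of~\eqref{form2}, which I denote by $\chi(t)$. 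Applying Lemma~\ref{lemma:total} to $h$ along the trajectory $(u^\theta,y^\theta)$ (which is still admissible, as $D_{\tilde u}h\equiv0$) and integrating~\eqref{form1} gives, for all $\theta$ and all $t$,
\begin{equation*}
 h(t,u^\theta_t,y^\theta_t,u^\theta,y^\theta)=h(0,u^\theta_0,y^\theta_0,u^\theta,y^\theta)+\int_0^t (D^{(1)}h)(r,u^\theta_r,y^\theta_r,u^\theta,y^\theta)\,\ud r .
\end{equation*}
Forming the difference of this identity at $\theta$ and at $0$, dividing by $\theta$ and letting $\theta\to0$, the two boundary terms tend to $\psi(t)$ and $\psi(0)$, while the integrand quotient tends pointwise in $r$ to $\chi(r)$; by the mean value theorem this quotient equals $D_{(\tilde u,\tilde y,u,y)}\bigl[(D^{(1)}h)(r,u^{\theta'}_r,y^{\theta'}_r,u^{\theta'},y^{\theta'})\bigr](v_r,z^{\theta'}_r,v,z^{\theta'})$ for some $\theta'$ between $0$ and $\theta$, with $z^{\theta'}:=D\Gamma(u^{\theta'},y_0+\theta'z_0)(v,z_0)$, and Lemma~\ref{interest} applied to $D^{(1)}h\in\SS$ bounds it by $C\bigl(|v_r|+|z^{\theta'}_r|+\int_0^r(|v_s|+|z^{\theta'}_s|)\,\ud s\bigr)$. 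Since for $|\theta|\le1$ the trajectories $(u^{\theta'},y^{\theta'})$ remain in a bounded set and hence, by Lemma~\ref{estimsobo}, the $z^{\theta'}$ remain bounded in $\Z_\infty$ (the constants being uniform over such a bounded family of trajectories), this quotient is dominated by a constant and dominated convergence applies. Thus $\psi(t)=\psi(0)+\int_0^t\chi(r)\,\ud r$ for all $t$; as $\chi\in L^\infty$ by Lemma~\ref{interest} applied to $D^{(1)}h$ along $(u,y)$, we conclude $\psi\in W^{1,\infty}([0,T])$ with $\dot\psi=\chi$, which is~\eqref{form2}.

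\textbf{Case $1\le s<\infty$.}
I would approximate $v$ in $\V_s$ by a sequence $(v^k)\subset\V_\infty$ and set $z^k:=z[v^k,z_0]\in\Z_\infty$. By linearity of~\eqref{lindyn} and Lemma~\ref{estimsobo}, $\|z^k-z\|_{1,s}\le C_s\|v^k-v\|_s\to0$, hence also $z^k\to z$ in $C([0,T];\R^n)$. Writing $\psi^k,\chi^k$ for the two sides of~\eqref{form2} built from $(v^k,z^k)$, the case $s=\infty$ gives $\psi^k\in W^{1,\infty}\subset W^{1,s}$ with $\dot\psi^k=\chi^k$. Both sides of~\eqref{form2} depend linearly on the direction, so $\psi^k-\psi$ and $\chi^k-\chi$ are the same expressions associated with the direction $(v^k-v,z^k-z)$, and Lemma~\ref{interest} (applied to $h$ and to $D^{(1)}h$ respectively) together with Hölder's inequality yields $\|\psi^k-\psi\|_\infty\to0$ and $\|\chi^k-\chi\|_s\to0$. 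Passing to the limit in $\int_0^T\psi^k\dot\phi=-\int_0^T\chi^k\phi$ for every $\phi\in C^\infty_c((0,T))$ identifies $\chi$ as the distributional derivative of $\psi$; since $\psi,\chi\in L^s$, this means $\psi\in W^{1,s}([0,T])$ and $\dot\psi=\chi$, i.e.~\eqref{form2}.

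\textbf{Main obstacle.}
I expect the only genuine difficulty to be the interchange of the $\theta$-derivative at $0$ with the integral $\int_0^t$ in the case $s=\infty$: it rests on bounds for the perturbed linearized states $z^{\theta'}$ that are uniform for $\theta$ near $0$, which is precisely what Lemmas~\ref{estimsobo} and~\ref{interest} supply once they are used along the whole family of trajectories; the remaining steps are routine. A purely algebraic alternative is also available — one checks~\eqref{form2} directly on the generators $a(t,\tilde y)$ and $\int_0^t b(t,s,u_s,y_s)\,\ud s$ of $\SS$ (after using $D_{\tilde u}h\equiv0$ to freeze $\tilde u$ at a constant, so that every factor is a $W^{1,\infty}$ function of $t$) and extends the identity by linearity and the Leibniz rule — but it is more computational, so I would keep it only as a cross-check.
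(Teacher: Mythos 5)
Your argument is correct, but it takes a genuinely different route from the paper's. The paper proves the lemma by the direct computation you only mention as a cross-check: using $D_{\tilde u}D_{(\tilde y,u,y)}h\equiv 0$ to freeze $\tilde u$ at a constant value, it expands $\psi$ explicitly from the representation \eqref{s} of $h\in\SS$ as a finite sum of products of smooth factors and integral factors, reads off $\psi\in W^{1,s}([0,T])$ directly from that expression, differentiates it, and matches the result with the $(\tilde u,\tilde y,u,y)$-derivative of formula \eqref{total} after substituting the expressions of $\dot y_t$ and $\dot z_t$; this handles all $s\in[1,\infty]$ at once. You instead differentiate, with respect to the perturbation parameter $\theta$, the integrated identity provided by lemma~\ref{lemma:total} along the family of trajectories $\bigl(u+\theta v,\Gamma(u+\theta v,y_0+\theta z_0)\bigr)$, and then pass from $s=\infty$ to general $s$ by linearity and density. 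This is more conceptual---it exhibits \eqref{form2} as the commutation of linearization with the integrated form of \eqref{form1}---and it avoids re-expanding $h$, but it requires two ingredients the paper's computation does not: uniformity in $\theta$ of the constants of lemmas~\ref{estimsobo} and~\ref{interest} over a bounded family of trajectories (true, since those constants only depend on bounds of the smooth data along trajectories with values in a fixed bounded set, but not stated in the paper, so the one-line justification you give is genuinely needed), and the final density/distributional-derivative step for $s<\infty$. Both proofs are sound; the paper's is shorter and uniform in $s$, while yours isolates the analytic content (differentiation under the integral sign, dominated convergence) at the price of these auxiliary verifications.
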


\begin{proof}
Using $D_{\tilde u} D_{(\tilde y,u,y)} h \equiv 0$, we have
\begin{align*}
 \psi(t) & = D_{(\tilde y,u,y)}h(t,u_0,y_t,u,y)(z_t,v,z) \\
 & =  \sum_\alpha  D_{\tilde y}a_\alpha(t,u_0,y_t)z_t \prod_\beta \int_0^t b_{\alpha,\beta} \ud s \\
 & \hspace{0.4cm}+ \sum_{\alpha,\beta}  a_\alpha(t,u_0,y_t) \int_0^t D_{(u,y)} b_{\alpha,\beta}(t,s,u_s,y_s)(v_s,z_s) \ud s 
  \prod_{\beta' \ne \beta} \int_0^t b_{\alpha,\beta'} \ud s. \nonumber
\end{align*}
It implies that $\psi \in W^{1,s}([0,T])$ and that
\begin{multline*}
 \frac{\ud \psi}{\ud t}(t)  = D^2_{t,(\tilde y,u,y)}h(t,u_t,y_t,u,y)(z_t,v,z) \\
  +D^2_{\tilde y,(\tilde y,u,y)}h(t,u_t,y_t,u,y)\left(\dot y_t,(z_t,v,z)\right) +D_{\tilde y}h(t,u_t,y_t,u,y) \dot z_t .
\end{multline*}
On the other hand, we differentiate $D^{(1)}h$ w.r.t. $(\tilde u,\tilde y,u,y)$ using \eqref{total}.
Then with the expressions of $\dot y_t$ and $\dot z_t$, we get the relation \eqref{form2}.
\end{proof}

\paragraph{}
Finally we define the order of a running state constraint $g_i$.
We denote $g_i^{(j+1)}:= D^{(1)}g_i^{(j)}$ (with $g_i^{(0)}:=g_i$).
Note that $g_i \in \mathcal S$, so $g_i^{(j)} \in \mathcal S$ for all $j\ge 0$.
Moreover, if we write $g_i^{(j)}$ as in~\eqref{s}, the $a_\alpha$ and $b_{\alpha,\beta}$ are combinations of derivatives of $f$ and $g_i$.

\begin{order} \label{order}
The \emph{order} of the constraint $g_i$ is the greatest positive integer $q_i$ such that
\begin{equation*}
 D_{\tilde u}g_i^{(j)} \equiv 0 \quad \text{for all } j=0,\ldots,q_i-1 .
\end{equation*}
\end{order}

\paragraph{}
We have a result similar to Lemma~9 in \cite{MR2421298}, but now for integral dynamics.
Let $(u,y) \in \U \times \Y $ be a trajectory, $(v,z_0) \in \V_s \times \R^n$, and $z:=z[v,z_0] \in \Z_s$ for some $s \in [1,\infty]$.
\begin{derivative} \label{derivative}
Let $g_i$ be of order at least $q_i \in \mathbb N^*$. Then
\[
 \begin{array}{ll}
  t \mapsto g_i(y_t) &  \in W^{q_i,\infty}([0,T]) ,\\
  t \mapsto g_i'(y_t)z_t & \in W^{q_i,s}([0,T]) ,
 \end{array}
\]
and
\begin{align}
&\frac{\ud^j}{\ud t^j}g_i(y)|_t =  g_i^{(j)}(t,y_t,u,y), \quad j=1,\ldots,q_i-1 \label{1q-1}, \\
&\frac{\ud^{q_i}}{\ud t^{q_i}}g_i(y)|_t  =  g_i^{(q_i)}(t,u_t,y_t,u,y)  \label{qs}, \\
&\frac{\ud^j}{\ud t^j}g_i'(y)z|_t  = \widehat D g_i^{(j)}(t,y_t,u,y)(z_t,v,z ), \quad j=1,\ldots,q_i-1 \label{ji},\\
&\frac{\ud^{q_i}}{\ud t^{q_i}}g_i'(y)z|_t  = D_{\tilde u}g_i^{(q_i)}(t,u_t,y_t,u,y)v_t + \widehat D g_i^{(q_i)}(t,u_t,y_t,u,y)(z_t,v,z ), \label{qu}
\end{align}
where we denote by $\D$ the differentiation w.r.t. $(\tilde y,u,y)$. 
\end{derivative}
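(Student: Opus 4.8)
The plan is to establish all four identities together with the claimed regularity by a single finite induction on the order of differentiation, feeding the functions $h=g_i^{(j)}$ into Lemma~\ref{lemma:total} and Lemma~\ref{lemma:totalbis}. This is legitimate precisely because $g_i$ has order at least $q_i$, so $D_{\tilde u}g_i^{(j)}\equiv 0$ for $j=0,\ldots,q_i-1$, and because $g_i^{(j)}\in\SS$ for every $j$ (as noted before Definition~\ref{order}), so that $D^{(1)}g_i^{(j)}=g_i^{(j+1)}$ stays in $\SS$.

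For the constraint itself I would start from $t\mapsto g_i(y_t)=g_i^{(0)}(t,u_t,y_t,u,y)$ and apply Lemma~\ref{lemma:total} successively to $h=g_i^{(0)},g_i^{(1)},\ldots,g_i^{(q_i-1)}$. The $j$-th application is valid since $D_{\tilde u}g_i^{(j)}\equiv 0$ for $j\le q_i-1$, and it shows that $t\mapsto g_i^{(j)}(t,u_t,y_t,u,y)$ lies in $W^{1,\infty}([0,T])$ with derivative $g_i^{(j+1)}(t,u_t,y_t,u,y)$. Cascading these identities gives $\frac{\ud^j}{\ud t^j}g_i(y)|_t=g_i^{(j)}(t,u_t,y_t,u,y)$ for $j=1,\ldots,q_i$; for $j\le q_i-1$ one may drop the $u_t$-argument since $g_i^{(j)}$ does not depend on $\tilde u$, which is \eqref{1q-1}, while at $j=q_i$ the function $g_i^{(q_i)}$ may genuinely depend on $\tilde u$ and one keeps $u_t$, which is \eqref{qs}. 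Since the last application ($j=q_i-1$) puts $g_i^{(q_i-1)}(t,u_t,y_t,u,y)$ in $W^{1,\infty}$, its derivative $g_i^{(q_i)}(t,u_t,y_t,u,y)$ is in $L^\infty$, hence $t\mapsto g_i(y_t)\in W^{q_i,\infty}([0,T])$.

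For $t\mapsto g_i'(y_t)z_t$, which equals $\widehat D g_i^{(0)}(t,y_t,u,y)(z_t,v,z)$, I would run the same induction with Lemma~\ref{lemma:totalbis} in place of Lemma~\ref{lemma:total}. Applying it to $h=g_i^{(j)}$, $j=0,\ldots,q_i-1$ (valid since $D_{\tilde u}g_i^{(j)}\equiv 0$), shows that $t\mapsto D_{(\tilde y,u,y)}g_i^{(j)}(t,u_t,y_t,u,y)(z_t,v,z)$ is in $W^{1,s}([0,T])$ with derivative $D_{(\tilde u,\tilde y,u,y)}g_i^{(j+1)}(t,u_t,y_t,u,y)(v_t,z_t,v,z)=D_{\tilde u}g_i^{(j+1)}(t,u_t,y_t,u,y)v_t+\widehat D g_i^{(j+1)}(t,u_t,y_t,u,y)(z_t,v,z)$. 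When $j+1\le q_i-1$ the first term vanishes and the right-hand side is $\widehat D g_i^{(j+1)}(t,y_t,u,y)(z_t,v,z)$, which is both formula \eqref{ji} for index $j+1$ and the input of the next step; when $j+1=q_i$ one obtains \eqref{qu}. The regularity $t\mapsto g_i'(y_t)z_t\in W^{q_i,s}([0,T])$ comes from the last application of Lemma~\ref{lemma:totalbis}, which already delivers the $(q_i-1)$-th derivative in $W^{1,s}$; one can also read it off \eqref{qu}, bounding $\widehat D g_i^{(q_i)}(t,u_t,y_t,u,y)(z_t,v,z)$ by $C(|z_t|+\int_0^t(|v_s|+|z_s|)\,\ud s)\in L^s$ via Lemma~\ref{interest}, and using the boundedness of $t\mapsto D_{\tilde u}g_i^{(q_i)}(t,u_t,y_t,u,y)$ together with $v\in\V_s$.

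The only point requiring care is checking, at each step, the hypothesis of Lemma~\ref{lemma:total}/Lemma~\ref{lemma:totalbis} that the argument $h$ carries no $\tilde u$-dependence; this is exactly the meaning of ``order at least $q_i$'' and is the reason the induction stops at level $q_i$, where $g_i^{(q_i)}$ is allowed to depend on $\tilde u$, producing the extra term $D_{\tilde u}g_i^{(q_i)}(\cdot)v_t$ in \eqref{qu} and the drop of regularity from $W^{q_i,\infty}$ to $W^{q_i,s}$, consistent with $v\in\V_s$. Everything else is bookkeeping: matching $D^{(1)}g_i^{(j)}$ with $g_i^{(j+1)}$ and splitting $D_{(\tilde u,\tilde y,u,y)}$ into its $\tilde u$-part and its $\widehat D$-part.
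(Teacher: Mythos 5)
Your proposal is correct and follows exactly the route the paper indicates: its proof of this lemma is just the remark that the result is straightforward from Lemma~\ref{lemma:total}, Lemma~\ref{lemma:totalbis}, Definition~\ref{order} and an induction on $j$, which is precisely the cascade you carry out (including the correct handling of the boundary case $j=q_i$, where the $D_{\tilde u}g_i^{(q_i)}v_t$ term appears and the regularity drops to $W^{q_i,s}$). Your write-up simply makes explicit the bookkeeping the paper leaves to the reader.
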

\begin{proof}
 It is straightforward with lemmas~\ref{lemma:total} and~\ref{lemma:totalbis}, definition~\ref{order} and an induction on $j$.
\end{proof}

\section{Weak results} \label{weak}
\subsection{A first abstract formulation} \label{a first abstract formulation}

The optimal control problem ($P$) can be rewritten as an abstract optimization problem on $(u,y_0)$.
The most naive way to do that is the following equivalent formulation:
\begin{align}
  &(P)&& \displaystyle \min_{(u,y_0)\in \U \times \R^n} J(u,y_0) \label{abs1} \\
  &\text{subject to }&& g(y[u,y_0]) \in  C_- \left( [0,T]; \R^r \right) , \label{abs2} \\
  &&& \Phi(y_0,y[u,y_0]_T) \in  K , \label{abs3} 
\end{align}
where
\begin{equation} \label{cost}
  J(u,y_0):= \int _0^T \ell(u_t,y[u,y_0]_t) \ud t +\phi(y_0,y[u,y_0]_T) 
\end{equation}
and $\Phi = (\Phi^E, \Phi^I)$, $K=\{0\}_{s_E} \times \left(\R_-\right)^{s_I}$.
In order to write optimality conditions for this problem, we first compute its Lagrangian
\begin{equation}
  L (u,y_0, \ud \eta, \Psi):= J(u,y_0)  + \int_{[0,T]} \ud \eta_t g(y[u,y_0]_t) + \Psi \Phi(y_0,y[u,y_0]_T) 
\end{equation}
where $(u,y_0,\ud \eta,\Psi) \in \U \times \R^n \times \M \times \R^{s*}$ (see the beginning of section~\ref{Lagrange multipliers}).
A \emph{Lagrange multiplier} at $(u,y_0)$ in this setting is any $(\ud \eta,\Psi)$ such that
\begin{align}
& D_{(u,y_0)} L (u,y_0, \ud \eta, \Psi) \equiv 0, \label{abst1}\\
& (\ud \eta,\Psi) \in N_{C_- \left( [0,T]; \R^r \right) \times K} \left( g(y),\Phi(y_0,y_T) \right). \label{abst2}
\end{align}
This definition has to be compared to definition~\ref{lagrangemult}:
\begin{coupe} \label{coupe}
We have that
 $(\ud \eta,\Psi)$ is a Lagrange multiplier of the abstract problem \eqref{abs1}-\eqref{abs3} at $(\ub,\yb_0)$ \textnormal{iff}
$(\ud \eta,\Psi,p)$ is a Lagrange multiplier of the optimal control problem \eqref{opt}-\eqref{ifconst} associated with $(\ub,y[\ub,\yb_0])$,
where $p$ is the unique solution of \eqref{adjfinal}.
\end{coupe}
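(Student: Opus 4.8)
The plan is to verify the equivalence piece by piece, treating the normal-cone conditions and the stationarity condition separately. Since the normal cone to a product is the product of normal cones, condition~\eqref{abst2} splits into $\ud\eta\in N_{C_-([0,T];\R^r)}(g(\yb))$ and $\Psi\in N_K(\Phi(\yb_0,\yb_T))$; the latter is literally~\eqref{multpsi}, and the former is the classical characterization of the normal cone to the cone of nonpositive continuous functions: it holds exactly when $\ud\eta$ is a nonnegative Radon measure, $g(\yb)\le 0$, and $\int_{[0,T]}\ud\eta_t\,g(\yb_t)=0$, i.e.~\eqref{multeta} (both sides tacitly requiring $g(\yb)\le0$ for the normal cones to be nonempty). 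So it remains to show that, with $p$ fixed as the unique solution of~\eqref{adjfinal} provided by Lemma~\ref{existadj}, the stationarity condition~\eqref{abst1} is equivalent to the conjunction of~\eqref{multstation} and the ``initial'' relation $-p_{0_-}=D_{y_1}\Phi[\Psi](\yb_0,\yb_T)$ that~\eqref{adjfinal} omits — equivalently, to~\eqref{multadj} together with~\eqref{multstation}.

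The heart of the argument is an explicit computation of $D_{(u,y_0)}L$ at $(\ub,\yb_0,\ud\eta,\Psi)$. Fixing a direction $(v,z_0)\in\U\times\R^n$ and writing $z:=z[v,z_0]=D\Gamma(\ub,\yb_0)(v,z_0)$ for the corresponding linearized state, differentiation of $L$ through $y=\Gamma(u,y_0)$ gives
\[
 D_{(u,y_0)}L(v,z_0)=\int_0^T\!\big(D_u\ell\,v_t+D_y\ell\,z_t\big)\ud t+\int_{[0,T]}\ud\eta_t\,g'(\yb_t)z_t+D_{y_1}\Phi[\Psi]\,z_0+D_{y_2}\Phi[\Psi]\,z_T,
\]
all derivatives being taken at $(\ub_t,\yb_t)$ or $(\yb_0,\yb_T)$. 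Then I would use~\eqref{adjfinal} to substitute $\ud\eta_t\,g'(\yb_t)=-\ud p_t-D_yH[p](t,\ub_t,\yb_t)\ud t$, integrate by parts the term $-\int_{[0,T]}z_t\,\ud p_t$ (a continuous $z$ against a $BV$ function $p$, using the $0_-/T_+$ conventions of~\eqref{moinsplus} and that $\supp(\ud p)\subset[0,T]$), and differentiate~\eqref{lindyn} in $t$ to express $\dot z_t$; applying Fubini's theorem to the resulting double integral over $\{0\le s\le t\le T\}$ makes the local and nonlocal parts of $f$ reassemble exactly into the two $p$-terms of the Hamiltonian~\eqref{Ham}. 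After all $z$-dependent contributions cancel, one is left with
\[
 D_{(u,y_0)}L(v,z_0)=\int_0^T D_uH[p](t,\ub_t,\yb_t)\,v_t\,\ud t+\big(p_{0_-}+D_{y_1}\Phi[\Psi]\big)z_0+\big(D_{y_2}\Phi[\Psi]-p_{T_+}\big)z_T,
\]
and the last bracket vanishes because $p$ solves~\eqref{adjfinal}.

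To conclude, note that in this last formula $v$ and $z_0$ are free and independent (the linearized state has disappeared), so $D_{(u,y_0)}L\equiv 0$ holds iff $\int_0^T D_uH[p](t,\ub_t,\yb_t)v_t\,\ud t=0$ for every $v\in\U$ and $\big(p_{0_-}+D_{y_1}\Phi[\Psi]\big)z_0=0$ for every $z_0\in\R^n$. By the fundamental lemma of the calculus of variations the first is~\eqref{multstation}, and the second is $-p_{0_-}=D_{y_1}\Phi[\Psi](\yb_0,\yb_T)$, which combined with~\eqref{adjfinal} is precisely the statement that $p$ solves the full adjoint system~\eqref{adjdyn}, i.e.~\eqref{multadj} (note that, conversely, any multiplier in the sense of Definition~\ref{lagrangemult} has $p$ equal to the solution of~\eqref{adjfinal} by uniqueness in Lemma~\ref{existadj}, so the hypothesis on $p$ in the statement is not restrictive). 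Both implications of the lemma follow. I expect the only genuinely delicate step to be the integration-by-parts-plus-Fubini manipulation: it must respect the $0_-/T_+$ endpoint conventions for $BV$ functions and the support of $\ud p$, and it is exactly here — not in the ODE situation of \cite{MR2779110,MR2421298} — that the memory structure of the dynamics enters and has to be matched, term by term, against the nonlocal contribution $\int_t^T p_s\,D_\tau f(s,t,\cdot)\,\ud s$ in $H[p]$.
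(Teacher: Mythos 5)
Your proof is correct and takes essentially the same route as the paper: both rest on fixing $p$ as the unique solution of \eqref{adjfinal} (lemma~\ref{existadj}), using the BV integration-by-parts formula \eqref{ippform} together with Fubini to match the memory term against the nonlocal part of $H[p]$, and arriving at the identity $D_{(u,y_0)}L(v,z_0)=\int_0^T D_uH[p](t,\ub_t,\yb_t)v_t\,\ud t+\left(p_{0_-}+D_{y_1}\Phi[\Psi](\yb_0,\yb_T)\right)z_0$, from which \eqref{abst1} is equivalent to \eqref{multadj} and \eqref{multstation}, while the equivalence of \eqref{abst2} with \eqref{multeta}--\eqref{multpsi} is immediate. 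The only (immaterial) difference is the order of operations: the paper rewrites $L$ via the Hamiltonian for an arbitrary $p\in\cP$ before differentiating, whereas you differentiate first and do the integration by parts and Fubini at the level of the linearized state $z[v,z_0]$.
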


\begin{proof}
Using the Hamiltonian \eqref{Ham}, the end points Lagrangian \eqref{endLag} and the formula \eqref{ippform} of integration by parts for functions of bounded variations
(see appendix~\ref{BVfunc}), we get
\begin{multline*}
 L (u,y_0, \ud \eta, \Psi)= \int _0^T H[p](t,u_t,y_t) \ud t + \int_{[0,T]}  \left( \ud p_t y_t + \ud \eta_t g(y_t) \right)\\
  + p_{0_-}y_0 - p_{T_+}y_T  + \Phi[\Psi](y_0,y_T)  
\end{multline*}
for any $p\in \cP$ and $y=y[u,y_0]$. We fix $(\bar u,\bar y_0, \ud \eta, \Psi)$, we differentiate $L$ w.r.t. $(u, y_0)$ at this point,
and we choose $p$ as the unique solution of
% the adjoint state equation
% (for $(\bar u,\bar y, \eta, \Psi)$, with $\bar y= y[\bar u, \bar y_0]$)
% with final condition only
\eqref{adjfinal}. Then
% We derive that: given $(\bar u,\bar y_0, \eta, \Psi)$,  for $\bar y= y[\bar u, \bar y_0]$ and $p$ the unique solution of the adjoint state equation with final condition only
% (see lemma~\ref{existadj}),
\begin{multline*}
 D_{(u,y_0)} L (\bar u,\bar y_0, \ud \eta, \Psi)(v,z_0)= \int _0^T D_u H[p](t,\bar u_t,\bar y_t)v_t \ud t  \\+ \left( p_{0_-} + D_{y_1} \Phi[\Psi](\bar y_0,\bar y_T)  \right)z_0
\end{multline*}
for all $(v,z_0) \in \U \times \R^n$. It follows that \eqref{abst1} is equivalent to \eqref{multadj} and \eqref{multstation}.
And it is obvious that \eqref{abst2} is equivalent to \eqref{multeta}-\eqref{multpsi}.
\end{proof}

Second we need a qualification condition.
% 
% the initial-final state constraints are:
% \begin{equation} \label{iflabel}
%  \Phi(y_0,y_T) \in  K,\ \text{with } K=\{0\}_{s_E} \times \R_-^{s_I} \ .
% \end{equation}
% We write $\Phi = (\Phi^E, \Phi^I)$, so that \eqref{iflabel} is equivalent to:
% \begin{equation}
%  \left\{ \ba{l}
%  \Phi^E(y_0,y_T) = 0 , \\ \Phi^I(y_0,y_T) \le 0. 
%  \ea \right.
% \end{equation}
\begin{qualif} \label{qualif}
We say that $(\bar u,\bar y)$ is \emph{qualified} if
\begin{enumerate}[(i)]
	    \item $\left\{ \ba{ccl}
		    (v,z_0)& \mapsto &D\Phi^E(\bar y_0,\bar y_T)(z_0,z[v,z_0]_T) \\
		    \U \times \R^n &\rightarrow &\R^{s_E}
		    \ea \right.$ is onto,
%             \item $(v,z_0)\in  \U \times \R^n \mapsto D\Phi^E(\bar y_0,\bar y_T)(z_0,z[v,z_0]_T) \in  \R^{s_E}$ is onto,
	    \item there exists $(\bar v,\bar z_0)  \in \U \times \R^n$ such that, with $\bar z= z[\bar v,\bar z_0]$,
% 	    \begin{align}
% 	     & D\Phi^E(\bar y_0,\bar y_T)(\bar z_0,\bar z_T)=0, \\
% 	     & D\Phi^I_i(\bar y_0,\bar y_T)(\bar z_0,\bar z_T) < 0 &&\text{for } i \text{ such that }  \Phi^I_i(\bar y_0,\bar y_T) = 0, \\
% 	     & g_i'(\bar y_t)\bar z_t < 0 && \text{for } (i,t)  \text{ such that } g_i(\bar y_t) =0 .
% 	    \end{align}
	    \begin{equation*}
	      \left\{ \ba{ll}
	     D\Phi^E(\bar y_0,\bar y_T)(\bar z_0,\bar z_T)=0,& \\
	     D\Phi^I_i(\bar y_0,\bar y_T)(\bar z_0,\bar z_T) < 0 ,& i \in \left\{i\,:\,\Phi^I_i(\bar y_0,\bar y_T) = 0\right\}, \\
	     g_i'(\bar y_t)\bar z_t < 0 \ \text{on}\ \left\{t \,:\, g_i(\bar y_t) =0\right\},& i=1,\ldots,r .
		\ea \right.
	    \end{equation*}
\end{enumerate}
\end{qualif}

\begin{qualifrem}
 \begin{enumerate}
  \item This condition is equivalent to Robinson's constraint qualification (introduced in \cite{MR0410522}, Definition~2)
for the abstract problem \eqref{abs1}-\eqref{abs3} at $(\bar u,\bar y_0)$;
see the discussion that follows Definition~3.4 and Definition~3.5 in \cite{MR941318} for a proof of the equivalence.
 \item It is sometimes possible to give optimality conditions without qualification condition by considering an auxiliary optimization problem
(see e.g. the proof of Theorem~3.50 in \cite{MR1756264}).
Nevertheless, observe that if $(\ub,\yb)$ is feasible but not qualified because (i) does not hold, then there exists a \emph{singular Langrange multiplier}
of the form $(0,\Phi^E,0)$. One can see that second-order necessary conditions become pointless since $-(0,\Phi^E,0)$ is a singular Lagrange multiplier too.
\end{enumerate}

\end{qualifrem}

Finally we derive the following first-order necessary optimality conditions:
\begin{firsteasy} \label{firsteasy}
 Let $(\bar u,\bar y)$ be a qualified local solution of ($P$).
 Then the set of associated Lagrange multipliers is nonempty, convex, bounded and weakly~$*$ compact.
 \end{firsteasy}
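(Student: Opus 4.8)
The plan is to recognize Theorem~\ref{firsteasy} as an instance of the classical first-order theory for abstract optimization problems under Robinson's constraint qualification, applied to the reformulation \eqref{abs1}-\eqref{abs3}. First I would invoke Remark~(1) after Definition~\ref{qualif}, which identifies the notion of "qualified" with Robinson's constraint qualification for \eqref{abs1}-\eqref{abs3} at $(\bar u,\bar y_0)$. The data of that abstract problem are smooth: $J$ is $C^\infty$ because $\ell$, $\phi$ are $C^\infty$ and $\Gamma$ is $C^\infty$ by \textbf{(A0)}; the constraint maps $(u,y_0)\mapsto g(y[u,y_0])\in C([0,T];\R^r)$ and $(u,y_0)\mapsto\Phi(y_0,y[u,y_0]_T)\in\R^s$ are likewise $C^\infty$ (again using smoothness of $g$, $\Phi$ and $\Gamma$); and the cones $C_-([0,T];\R^r)$ and $K$ are closed convex with nonempty interior. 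Hence the general theory (e.g. \cite{MR1756264}, Theorem~3.9 and the surrounding results on Robinson's condition) applies and yields that, at a local solution, the set of generalized Lagrange multipliers $(\ud\eta,\Psi)$ satisfying \eqref{abst1}-\eqref{abst2} is nonempty.

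Next I would transfer this conclusion to the optimal control formulation via Lemma~\ref{coupe}: for every abstract multiplier $(\ud\eta,\Psi)$, setting $p$ to be the unique solution of \eqref{adjfinal} (which exists and is unique by Lemma~\ref{existadj}) produces a Lagrange multiplier $(\ud\eta,\Psi,p)$ in the sense of Definition~\ref{lagrangemult}, and conversely. Since the correspondence $(\ud\eta,\Psi)\mapsto(\ud\eta,\Psi,p)$ is affine and continuous (indeed $p$ depends linearly and boundedly on $(\ud\eta,\Psi)$ through the linear adjoint equation \eqref{adjfinal}), it carries the nonempty, convex, bounded, weak-$*$ compact structure back and forth. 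So it suffices to establish those four properties for the set of abstract multipliers.

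For the remaining properties I would argue as follows. Convexity is immediate: the conditions \eqref{abst1}-\eqref{abst2} are, respectively, a linear equation in $(\ud\eta,\Psi)$ and the requirement that $(\ud\eta,\Psi)$ lie in a normal cone to a convex set, which is itself a convex cone; their intersection is convex. For boundedness and weak-$*$ compactness, this is exactly the standard consequence of Robinson's constraint qualification: the qualification condition implies that the multiplier set is bounded in $\M\times\R^{s*}=\left(C([0,T];\R^r)\times\R^s\right)^*$; being also weak-$*$ closed (the defining conditions \eqref{abst1}-\eqref{abst2} pass to weak-$*$ limits, using that $N_{C_-\times K}$ has weak-$*$ closed graph and that $D_{(u,y_0)}L$ is weak-$*$ continuous in $(\ud\eta,\Psi)$), it is weak-$*$ compact by Banach--Alaoglu. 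I would spell out the boundedness estimate: if $(\bar v,\bar z_0)$ is the direction from Definition~\ref{qualif}(ii) and $(\ud\eta,\Psi)$ is a multiplier, evaluating \eqref{abst1} at $(\bar v,\bar z_0)$ together with \eqref{abst2} gives, via the formula for $D_{(u,y_0)}L$ obtained in the proof of Lemma~\ref{coupe}, a bound on $\int_{[0,T]}\ud\eta_t\,(-g_i'(\bar y_t)\bar z_t)$ and on the active components of $\Psi$ in terms of $DJ$; since $-g_i'(\bar y_t)\bar z_t$ is bounded below by a positive constant on the (compact) contact set and $\ud\eta\ge0$ is supported there, this controls $\|\ud\eta\|$, and surjectivity in (i) then controls the equality part of $\Psi$.

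The main obstacle, and the only point requiring genuine care, is verifying that the abstract problem \eqref{abs1}-\eqref{abs3} genuinely fits the hypotheses of the cited abstract theorem — in particular that $\Gamma\colon\U\times\R^n\to\Y$, hence the constraint maps into $C([0,T];\R^r)$ and $\R^s$, are $C^1$ (indeed $C^\infty$) with the derivatives given by the linearized state equation of \S\ref{linearized state equation}, and that the equivalence asserted in Remark~(1) after Definition~\ref{qualif} is correctly in force. Both are already supplied by the paper (the smoothness of $\Gamma$ is asserted right after \textbf{(A0)}, and the Robinson-equivalence is attributed to \cite{MR941318}), so the proof reduces to assembling these ingredients; I would keep the write-up short and cite \cite{MR1756264} for the abstract first-order result.
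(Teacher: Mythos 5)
Your proposal follows essentially the same route as the paper: the paper likewise applies the abstract first-order theory under Robinson's constraint qualification to the formulation \eqref{abs1}--\eqref{abs3} (citing Theorem~4.1 of \cite{MR526427} rather than \cite{MR1756264}) and then transfers the conclusion through lemma~\ref{coupe} and the affine continuous map $(\ud\eta,\Psi)\mapsto(\ud\eta,\Psi,p)$, so your extra details on convexity, boundedness and weak-$*$ compactness are just the content of the cited theorem spelled out. One small inaccuracy: $K=\{0\}_{s_E}\times(\R_-)^{s_I}$ has empty interior when $s_E\ge 1$, but this is harmless since the Robinson-qualification framework you invoke does not require it.
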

 
\begin{proof}
 Since the abstract problem \eqref{abs1}-\eqref{abs3} is qualified, we get the result for the set $\{(\ud \eta,\Psi)\}$ of Lagrange multipliers in this setting
(Theorem~4.1 in \cite{MR526427}).
 We conclude with lemma~\ref{coupe} and the fact that
\begin{align*}
 \M \times \R^{s*} & \longrightarrow \M \times \R^{s*} \times \cP  \nonumber \\
(\ud \eta,\Psi)& \longmapsto (\ud \eta,\Psi,p)
\end{align*}
is affine continuous (it is obvious from the proof of lemma~\ref{existadj}).
\end{proof}

We will prove a stronger result in section~\ref{strong}, relying on another abstract formulation, the so-called \emph{reduced problem}.
The main motivation for the reduced problem, as mentioned in the introduction, is actually to satisfy an \emph{extended polyhedricity condition}
(see Definition~3.52 in \cite{MR1756264}), in order to easily get second-order necessary conditions (see Remark~3.47 in the same reference).

\subsection{The reduced problem} \label{reducedpbdef}
In the sequel \underline{we fix a feasible trajectory $(\bar u, \bar y)$},
i.e. which satisfies \eqref{dyn}-\eqref{ifconst},
and denote by $\Lambda$ the set of associated Lagrange multipliers (definition~\ref{lagrangemult}).
We need some definitions:

\begin{constraints}
 An \emph{arc} is a maximal interval, relatively open in $[0,T]$, denoted by $(\tau_1,\tau_2)$,
such that the set of active running state constraints at time $t$ is constant for all $t \in (\tau_1,\tau_2)$.
It includes intervals of the form $[0,\tau)$ or $(\tau,T]$.
 If $\tau$ does not belong to any arc, we say that $\tau$ is a \emph{junction time}.
 
 Consider an arc $(\tau_1,\tau_2)$. It is a \emph{boundary arc} for the constraint $g_i$ if the latter is active on $(\tau_1,\tau_2)$;
 otherwise it is an \emph{interior arc} for $g_i$.
 
 Consider an interior arc $(\tau_1,\tau_2)$ for $g_i$. If $g_i(\tau_2)=0$, then $\tau_2$ is an \emph{entry point} for $g_i$;
 if $g_i(\tau_1)=0$, then $\tau_1$ is an \emph{exit point} for $g_i$.
 If $\tau$ is an entry point and an exit point, then it is a \emph{touch point} for $g_i$. 
 
 Consider a touch point $\tau$ for $g_i$. We say that $\tau$ is \emph{reducible}
 if $\frac{\ud^2}{\ud t^2}g_i(\bar y_t)$, defined in a weak sense, is a function for $t$ close to $\tau$, continuous at $\tau$, and 
 \begin{equation*}
%    \frac{d}{dt}g_i(\bar y_t) |_{t=\tau} = 0, \quad
 \frac{\ud^2}{\ud t^2}g_i(\bar y_t) |_{t=\tau} < 0 .
 \end{equation*}

\end{constraints}

\begin{reducible}
 Let $\tau$ be a touch point for $g_i$. By lemma~\ref{derivative}, if $g_i$ is of order at least $2$, then
$\tau$ is reducible if $t \mapsto g_i^{(2)}(t,\bar u_t,\bar y_t,\bar u,\bar y)$
is continuous at $\tau$ and $g_i^{(2)}(\tau,\bar u_\tau,\bar y_\tau,\bar u,\bar y)< 0$.
Note that the continuity holds either if $u$ is continuous at $\tau$ or if $g_i$ is of order at least $3$.
% If $g_i$ of order at least $3$, then $\tau$ is reducible if and only if $g_i^{(2)}(\tau,\bar y_\tau,\bar u,\bar y)< 0$.
\end{reducible}

\paragraph{}
The interest of reducibility will appear with the next lemma.
For $\tau \in [0,T]$ and $\e > 0$ (to be fixed), we define $\mu_{\tau} \colon W^{2,\infty}([0,T]) \rightarrow \R$ by
\begin{equation} \label{mudef}
  \mu_{\tau}(x) := \max\left\{ x_t \, : \, t \in [\tau- \e,\tau + \e] \cap [0,T] \right\}.
\end{equation}
\begin{muep}\label{muep} 
 Let $g_i$ not be of order $1$ (i.e. $ D_{\tilde u}g_i^{(1)} \equiv 0$) and $\tau$ be a reducible touch point for $g_i$.
 Then for $\e > 0$ small enough, $\mu_\tau$ is $C^1$ in a neighbourhood of $g_i(\yb) \in W^{2,\infty}([0,T])$
 and twice Fr\'echet differentiable at $g_i(\yb)$, with first and second derivatives at $g_i(\yb)$ given by
  \begin{align}
 & D\mu_{\tau}(g_i(\bar y))x =x_\tau , \label{mu1}\\
 & D^2\mu_{\tau}(g_i(\bar y))(x)^2 = - \dfrac{\left(\dfrac{\ud}{\ud t}x_t |_\tau \right)^2}{\dfrac{\ud^2}{\ud t^2}g_i(\yb_t)|_\tau},\label{mu2}
 \end{align}
for any $x \in W^{2,\infty}([0,T])$.
\end{muep}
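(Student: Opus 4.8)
The plan is to reduce the statement to a well-understood result about the max functional and the reducibility of a touch point. First I would observe that since $g_i$ is not of order $1$, Lemma~\ref{derivative} (with $q_i \geq 2$) gives $t \mapsto g_i(\bar y_t) \in W^{2,\infty}([0,T])$, and more generally that for $x = g_i(\bar y)$ the map $t \mapsto \ddt x_t = g_i^{(1)}(t,\bar y_t,\bar u,\bar y)$ is itself in $W^{1,\infty}$ with weak derivative $g_i^{(2)}$; the reducibility hypothesis then says exactly that $g_i^{(2)}(\cdot)$ is continuous at $\tau$ with a strictly negative value there, i.e.\ $\ddt x_t$ is $C^1$ near $\tau$ with $\frac{\ud^2}{\ud t^2}g_i(\bar y_t)|_\tau < 0$. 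So $x$ has a nondegenerate maximum behaviour near $\tau$: since $\tau$ is a touch point, $x_\tau = 0$, $g_i(\bar y_t) \leq 0$ on $[0,T]$, hence $\ddt x_t|_\tau = 0$ as well (an interior local max; if $\tau \in \{0,T\}$ one argues with the one-sided derivative, but a touch point is by definition an interior entry-and-exit point so $\tau \in (0,T)$).

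Next I would make the change of variables that turns $\mu_\tau$ into the standard ``max over a compact interval'' functional restricted to a small window. For $\e$ small enough that $[\tau - \e, \tau + \e] \subset (0,T)$ and that $\ddt x_t$ is $C^1$ and $\frac{\ud^2}{\ud t^2}x_t < 0$ on all of $[\tau-\e,\tau+\e]$, the point $\tau$ is the unique maximizer of $x$ on $[\tau-\e,\tau+\e]$, it lies in the interior, and it is a strict, nondegenerate maximizer (negative second derivative). For $\wt x$ in a $W^{2,\infty}$-neighbourhood of $x$ the same quantitative picture persists by a standard implicit-function / perturbation argument: the equation $\ddt \wt x_t = 0$ has a unique solution $t(\wt x) \in (\tau-\e,\tau+\e)$, depending smoothly ($C^1$) on $\wt x \in W^{2,\infty}$ because $(\wt x,t) \mapsto \ddt \wt x_t$ is $C^1$ from $W^{2,\infty}\times\R$ to $\R$ with nonvanishing $t$-partial derivative $\frac{\ud^2}{\ud t^2}\wt x_t|_{t(\wt x)} < 0$, and $t(\wt x)$ is the argmax. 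Hence $\mu_\tau(\wt x) = \wt x_{t(\wt x)}$ is a composition of $C^1$ maps, so $\mu_\tau$ is $C^1$ near $x$. Its derivative is computed by the envelope (Danskin-type) formula: since $t(\wt x)$ is a critical point of $t \mapsto \wt x_t$, the partial derivative through the argmax drops out, giving $D\mu_\tau(\wt x)\,h = h_{t(\wt x)}$; at $\wt x = x$ this is $h_\tau$, which is \eqref{mu1}.

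For the second derivative at $x$, I would differentiate $D\mu_\tau(\wt x)h = h_{t(\wt x)}$ in $\wt x$ along a direction $x$ itself (or rather a perturbation), using the chain rule: $D^2\mu_\tau(x)(h,k) = \left(\ddt h_t\big|_{t=\tau}\right) \cdot Dt(x)k$, where $Dt(x)k$ is obtained by implicitly differentiating $\ddt \wt x_{t(\wt x)} = 0$, namely $\frac{\ud^2}{\ud t^2}x_t|_\tau \cdot Dt(x)k + \ddt k_t|_\tau = 0$, so $Dt(x)k = - \dfrac{\ddt k_t|_\tau}{\frac{\ud^2}{\ud t^2}x_t|_\tau}$. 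Taking $h = k = x$ and recalling $x = g_i(\bar y)$ gives exactly \eqref{mu2}. Finally I would note that twice Fr\'echet differentiability \emph{at} $x$ (as opposed to $C^2$ near $x$) is all that is claimed and all that the regularity of $t \mapsto \ddt \wt x_t$ affords — one only needs the first derivative map $\wt x \mapsto D\mu_\tau(\wt x)$ to be differentiable at the single point $x$, which follows from the $C^1$ dependence of $t(\wt x)$.

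The main obstacle I anticipate is the regularity bookkeeping: one must check that the map $(\wt x,t)\mapsto \ddt \wt x_t$ is genuinely $C^1$ on $W^{2,\infty}([0,T])\times(\tau-\e,\tau+\e)$ — it is linear (hence smooth) in $\wt x$ but evaluation-in-$t$ of $\ddt\wt x$ requires $\ddt\wt x$ to be continuous, which is exactly why we sit in $W^{2,\infty}$ and why the hypothesis ``$g_i$ not of order $1$'' (so $x\in W^{2,\infty}$, $\ddt x$ continuous) is essential. The implicit function theorem then applies on the open set where $\frac{\ud^2}{\ud t^2}\wt x_t \ne 0$, but one must verify this set is a neighbourhood of $x$ in $W^{2,\infty}$, which uses the continuity of $g_i^{(2)}$ at $\tau$ from reducibility together with the fact that $W^{2,\infty}$-convergence implies uniform convergence of second derivatives near $\tau$. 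Shrinking $\e$ first, then the neighbourhood of $x$, in the right order is the delicate point; everything else is routine calculus on Banach spaces.
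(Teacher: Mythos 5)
Your overall route --- localize around $\tau$, characterize the maximizer $t(\xt)$ of a nearby $\xt$ by $\frac{\ud}{\ud t}\xt_t|_{t(\xt)}=0$, obtain \eqref{mu1} by the envelope formula and \eqref{mu2} by implicitly differentiating this relation at $g_i(\yb)$ --- is the standard one; it is essentially the content of Lemma~23 of \cite{MR2421298}, which the paper's proof simply invokes after checking, via lemma~\ref{derivative} and the definition of a reducible touch point, that $g_i(\yb)\in W^{2,\infty}([0,T])$ satisfies its hypotheses. However, the step you use to make this rigorous fails as stated. You assert that reducibility makes $t\mapsto\frac{\ud}{\ud t}g_i(\yb_t)$ of class $C^1$ near $\tau$, and that $(\xt,t)\mapsto \frac{\ud}{\ud t}\xt_t$ is ``genuinely $C^1$'' on $W^{2,\infty}\times(\tau-\e,\tau+\e)$, so that the classical implicit function theorem produces a $C^1$ map $\xt\mapsto t(\xt)$. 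Neither regularity claim is available: reducibility only says that the weak second derivative of $g_i(\yb)$ is a function near $\tau$, continuous \emph{at the single point} $\tau$ and negative there, not continuous on a neighbourhood; and a generic $\xt\in W^{2,\infty}$ has merely an $L^\infty$ second derivative, so $t\mapsto\frac{\ud}{\ud t}\xt_t$ is Lipschitz but in general not differentiable in $t$, its $t$-partial derivative is not continuous, and the IFT in the form you invoke does not apply. For the same reason its advertised conclusion --- $C^1$ dependence of $t(\xt)$ on $\xt$ throughout a neighbourhood --- is false in general in this setting.

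The gap is repairable, and the repair shows which weaker facts suffice. Continuity at $\tau$ and negativity of $\frac{\ud^2}{\ud t^2}g_i(\yb_t)$ give $c>0$ and $\e>0$ with $\frac{\ud^2}{\ud t^2}g_i(\yb_t)\le -c$ a.e.\ on $[\tau-\e,\tau+\e]$; hence every $\xt$ in a small $W^{2,\infty}$ ball around $g_i(\yb)$ has $\frac{\ud^2}{\ud t^2}\xt_t\le -c/2$ a.e.\ there, so $\frac{\ud}{\ud t}\xt$ is continuous, strictly decreasing and changes sign inside $(\tau-\e,\tau+\e)$. This yields existence and uniqueness of the maximizer $t(\xt)$ together with the Lipschitz estimate $|t(\xt)-t(\xt')|\le \frac{2}{c}\,\|\frac{\ud}{\ud t}(\xt-\xt')\|_\infty$, with no implicit function theorem. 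Continuity of $t(\cdot)$ is all that is needed for $\mu_\tau$ to be $C^1$ with $D\mu_\tau(\xt)h=h_{t(\xt)}$ (sandwich $\mu_\tau(\xt+h)$ between $(\xt+h)_{t(\xt)}$ and $(\xt+h)_{t(\xt+h)}$); and differentiability of $t(\cdot)$ at the single point $g_i(\yb)$ --- which does hold, precisely because the second derivative is continuous at $\tau$, by expanding $0=\frac{\ud}{\ud t}(g_i(\yb)+h)_t|_{t(g_i(\yb)+h)}$ to first order around $\tau$ --- is what justifies your chain-rule computation of \eqref{mu2}. That computation is then valid, but only at $g_i(\yb)$ itself, which is indeed all the lemma claims; twice differentiability in a neighbourhood is neither claimed nor true under these hypotheses.
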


\begin{proof}
We apply Lemma~23 of \cite{MR2421298} to $g_i(\yb)$, which belongs to $W^{2,\infty}([0,T])$ by lemma~\ref{derivative}
and satisfies the required hypotheses at $\tau$ by definition of a reducible touch point.
\end{proof}

\begin{muepsuite} \label{muepsuite}
We can write \eqref{mu1} and \eqref{mu2} for $x= g_i'(\yb)z[v,z_0]$ (since $g_i$ is not of order $1$).
By lemma~\ref{derivative}, \eqref{mu2} becomes
\begin{align}
% & D\mu_{\tau}(g_i(\bar y))g_i'(\yb)z=g_i'(\yb_\tau)z_\tau ,  \label{mu3}\\
& D^2\mu_{\tau}(g_i(\bar y))(g_i'(\yb)z)^2 = - \dfrac{\left( \D g_i^{(1)}(\tau,\bar y_\tau,\bar u,\bar y)(z_\tau,v,z)\right)^2}
{g_i^{(2)}(\tau,\bar u_\tau,\bar y_\tau,\bar u,\bar y)}  \label{mu4}
\end{align}
where $z=z[v,z_0]$, $(v,z_0) \in \U \times \R^n$ and $\D$ is the differentiation w.r.t. $(\tilde y,u,y)$.
We will also use \eqref{mu1} for $x= g_i''(\yb)(z[v,z_0])^2 + g_i'(\yb)z^2[v,z_0]$, $z^2[v,z_0]$ being defined by \eqref{secondlin}.
It can indeed be shown that it belongs to $W^{2,\infty}([0,T])$.
\end{muepsuite}

In view of these results we distinguish running state constraints of order $1$.
Without loss of generality, we suppose that 
\begin{itemize}
\itr $g_i$ is of order $1$ for $i= 1 ,\ldots, r_1$,
\itr $g_i$ is not of order $1$ for $i=r_1 +1 ,\ldots,r$,
\end{itemize}
where $0 \le r_1 \le r$.
We make now the following assumption:
\begin{description}
 \item[(A1)] There are finitely many junction times, and for $i=r_1+1 ,\ldots,r$ all touch points for $g_i$ are reducible.
\end{description}

\paragraph{}
For $i=1 , \ldots,r_1$ we consider the contact sets of the constraints
\begin{equation} \label{contact}
 \I_i  := \{t\in [0,T]\, : \, g_i(\bar y_t)=0 \} .
\end{equation}
For $i=r_1+1 , \ldots,r$ we remove the touch points from the contact sets:
 \begin{align}
 \mathcal T_i & := \text{the set of (reducible) touch points for } g_i, \label{touch}\\
 \I_i & := \{t\in [0,T]\, : \, g_i(\bar y_t)=0 \}\setminus \mathcal T_i.\label{bound}
 \end{align}
For $i=1 , \ldots,r$ and $\e \ge 0$ we denote
\begin{equation} \label{vois}
  \I_i^\e:= \{t\in [0,T]\, :\, \dist(t,\I_i) \le \e \} .
\end{equation}
Assumption \textbf{(A1)} implies that $\I_i^\e$ has finitely many connected components for any $\e \ge 0$ ($1\le i \le r$) and that $\T_i$ is finite ($1 \le i \le r_1 $).
Let $N:= \sum_{r_1 < i \le r} | \mathcal T_i |$.

\paragraph{}
Now \underline{we fix $\e > 0$} small enough (so that lemma~\ref{muep} holds) and we define
\begin{align}
%  & J(u,y_0):= \int _0^T \ell(u_t,y[u,y_0]_t) \ud t +\phi(y_0,y[u,y_0]_T) ,\\
 & G_1(u,y_0):= \big( g_i\left(y[u,y_0]\right)|_{\I_i^\e}\big)_{1\le i \le r} ,
 && K_1 := \prod_{i=1}^r C_{-}\left( \I_i^\e\right), \\
  & G_2(u,y_0):= \big( \mu_{\tau}\left(g_i\left(y[u,y_0]\right)\right) \big)_{\tau \in \mathcal T_i ,\  r_1< i \le r} ,
 && K_2 := \big( \R_{-}\big)^{N} , \\
 & G_3(u,y_0):=\Phi \left(y_0,y[u,y_0]_T\right), && K_3:= K .
\end{align}
Recall that $J$ has been defined by \eqref{cost}.

\paragraph{}
The \textbf{reduced problem} is the following abstract optimization problem:
\begin{align*}
  (P_R) \quad \displaystyle \min_{(u,y_0)\in \mathcal U \times \R^n} J(u,y_0), \quad \text{subject to} \quad 
  \left\{ \begin{array}{l}
   G_1(u,y_0) \in K_1 \\
   G_2(u,y_0) \in K_2 \\
   G_3(u,y_0) \in K_3  
  \end{array} \right. .
\end{align*}

\begin{link} \label{link}
%  \begin{enumerate}
%  \item 
We had fixed $(\ub,\yb)$ as a feasible trajectory; then $(\ub,\yb_0)$ is feasible for $(P_R)$.
Moreover, $(\bar u,\bar y)$ is a local solution of ($P$) \textit{iff}
$(\bar u,\bar y_0)$ is a local solution of ($P_R$),
%  \item 
% 
and the qualification condition at $(\bar u,\bar y)$ (definition~\ref{qualif})
is equivalent to Robinson's constraints qualification for ($P_R$) at $(\bar u,\bar y_0)$
(using lemma~\ref{muep}).
% \end{enumerate}
\end{link}

Thus it is of interest for us to write optimality conditions for ($P_R$).

\subsection{Optimality conditions for the reduced problem}

The \emph{Lagrangian} of ($P_R$) is 
\begin{multline} \label{lagclass}
 L_R(u,y_0,\ud \rho,\nu,\Psi):= J(u,y_0) +\sum_{1 \le i \le r} \int_{\I_i^{\e}} g_i(y[u,y_0]_t)\ud \rho_{i,t}  \\
 + \sum_{\substack{\tau \in \mathcal T_i \\ r_1< i \le r}} \nu_{i,\tau}\mu_{\tau}\left(g_i\left(y[u,y_0]\right)\right) +\Psi \Phi(y_0,y[u,y_0]_T)
\end{multline}
\begin{flalign*}
&\text{where} \quad u \in \U, \quad y_0 \in \R^n, \quad \ud \rho \in \prod_{i=1}^r \M\left( \I_i^{\e}\right) , \quad \nu \in \R^{N*},\quad \Psi \in \R^{s*}.&&
\end{flalign*}
As before, a measure on a closed interval is denoted by $\ud \mu$
and is identified with the derivative of a function of bounded variations which is null on the right of the interval.
% As before we denote a measure on $\I_i^\e$ by $\ud \rho_i \in \M\left( \I_i^{\e}\right)$,
% identifying it with the derivative of function of bounded variations which is null on the right side of each component of $\I_i^\e$.

A \emph{Lagrange multiplier} of ($P_R$) at $(\bar u,\bar y_0)$ is any $(\ud \rho,\nu,\Psi)$ such that
\begin{align}
 & D_{(u,y_0)} L_R(\bar u,\bar y_0,\ud \rho,\nu,\Psi) = 0, \label{normalstation}\\
 & \ud \rho_i \ge 0 ,\ g_i(\bar y)|_{\I_i^\e} \le 0,\ \int_{\I_i^{\e}} g_i(\bar y_t) \ud \rho_{i,t} = 0, \quad i=1,\ldots,r,\label{normalrho} \\
 & \nu_{i,\tau} \ge 0, \ \mu_{\tau}\left(g_i(\yb )\right)\le 0,
 \ \nu_{i,\tau}\mu_{\tau}\left(g_i(\yb )\right)= 0,\quad \tau \in \mathcal T_i,\ i=r_1+1,\ldots,r, \label{normalnu} \\
 & \Psi \in N_K\left( \Phi(\bar y_0,\bar y_T) \right). \label{normalpsi}
\end{align}
We denote by $\Lambda_R$ the set of Lagrange multipliers of ($P_R$) at $(\bar u,\bar y_0)$.
The first-order necessary conditions for ($P_R$) are the same as in theorem~\ref{firsteasy}:

\begin{firstreduced} \label{firstreduced}
Let $(\bar u,\bar y_0)$ be a qualified local solution of ($P_R$).
Then $\Lambda _R$ is nonempty, convex, bounded and weakly~$*$ compact.
\end{firstreduced}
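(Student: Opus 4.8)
The plan is to recognize $(P_R)$ as an instance of the abstract optimization problem $\min J(u,y_0)$ subject to $G(u,y_0) \in \mathcal K$, where $G := (G_1,G_2,G_3)$ maps the Banach space $\U \times \R^n$ into the Banach space $\big(\prod_{i=1}^r C(\I_i^\e)\big) \times \R^N \times \R^s$ and $\mathcal K := K_1 \times K_2 \times K_3$ is a closed convex subset of it. The statement then follows from the classical first-order theory for abstract problems under Robinson's constraint qualification (Theorem~4.1 in \cite{MR526427}, or Theorem~3.9 in \cite{MR1756264}), exactly as in the proof of theorem~\ref{firsteasy}.

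First I would check that $J$ and $G$ have the $C^1$ regularity near $(\bar u,\bar y_0)$ required to apply that theory. Since $\Gamma$ is $C^\infty$ by assumption \textbf{(A0)}, the maps $J$, $G_1$, $G_3$ are $C^\infty$, being compositions of $\Gamma$ with smooth maps and with the (continuous, linear) restriction and terminal-value operators. For $G_2$ the only non-smooth ingredient is $\mu_\tau$; but by lemma~\ref{muep}, which applies because $\e$ has been chosen small enough and, for $r_1 < i \le r$, every touch point of $g_i$ is reducible by \textbf{(A1)}, the map $\mu_\tau$ is $C^1$ in a neighbourhood of $g_i(\bar y)$. Hence $G_2$, and therefore $G$, is of class $C^1$ near $(\bar u,\bar y_0)$.

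Next I would invoke remark~\ref{link}: the qualification of $(\bar u,\bar y_0)$ for $(P_R)$ is Robinson's constraint qualification, so the abstract theorem gives that the set of abstract Lagrange multipliers, a subset of $N_{\mathcal K}(G(\bar u,\bar y_0)) \subset \big(\prod_{i=1}^r C(\I_i^\e)\big)^* \times \R^{N*} \times \R^{s*}$, is nonempty, convex, bounded and weakly~$*$ compact. It then remains to identify this set with $\Lambda_R$: since the dual of $C(\I_i^\e)$ is $\M(\I_i^\e)$ and the normal cone $N_{\mathcal K}$ splits as the product of the normal cones to $K_1$, $K_2$, $K_3$, the condition $(\lambda_1,\lambda_2,\lambda_3) \in N_{\mathcal K}(G(\bar u,\bar y_0))$ is precisely the sign and complementarity conditions \eqref{normalrho}--\eqref{normalpsi} satisfied by $(\ud\rho,\nu,\Psi)$, while stationarity of the abstract Lagrangian is \eqref{normalstation}; the correspondence is the identity, so the topological properties transfer verbatim. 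The only point that genuinely needs care is the $C^1$ regularity of $G_2$, i.e. the correct use of lemma~\ref{muep}; the rest is a transcription of the proof of theorem~\ref{firsteasy}. \hfill$\qed$
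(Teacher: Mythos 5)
Your proposal is correct and follows essentially the route the paper intends: $(P_R)$ is treated as an abstract problem under Robinson's constraint qualification (equivalent to the qualification of $(\bar u,\bar y)$ by remark~\ref{link}), and Theorem~4.1 in \cite{MR526427} gives nonemptiness, convexity, boundedness and weak~$*$ compactness of the multiplier set, which here coincides with $\Lambda_R$ by its very definition \eqref{normalstation}--\eqref{normalpsi}, so no analogue of lemma~\ref{coupe} is needed. Your explicit check of the $C^1$ regularity of $G_2$ via lemma~\ref{muep} is a detail the paper leaves implicit, but it is the same argument, not a different one.
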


\paragraph{}
Given $(\ud \rho,\nu) \in \prod_{i=1}^r \M\left( \I_i^{\e}\right) \times \R^{N*}$, we define $\ud \eta \in \M$ by
\begin{equation}  \label{eta_def}
\ud \eta_{i} := \left\{ \begin{array}{lll}
                            \ud \rho_{i} & \text{on } \I_i^{\e}, &i=1,\ldots,r, \\
                            \sum_{\tau \in \mathcal T_i} \nu_{i,\tau} \delta_\tau & \text{elsewhere},& i=r_1+1,\ldots,r.
                           \end{array} \right.
\end{equation} 
Conversely, given $\ud \eta \in \M$, we define $(\ud \rho,\nu) \in \prod_{i=1}^r \M\left( \I_i^{\e}\right) \times \R^{N*}$ by
% \begin{align}
%    \rho_i  & := {\eta_i}|_{\I_i^{\e}}&& i=1,\ldots,r ,\label{rho_def}\\
%    \nu_{i,\tau} & := [\eta_{i,\tau}]&& \tau \in \mathcal T_i , \quad i=r_1+1,\ldots,r .\label{nu_def}
% \end{align}
\begin{equation} \label{rhonu}
 \left\{ \ba{ll}
   \ud \rho_i  := {\ud \eta_i}|_{\I_i^{\e}}&  i=1,\ldots,r ,\\
   \nu_{i,\tau}  := \ud \eta_i( \{ \tau \}) &\tau \in \mathcal T_i , \quad i=r_1+1,\ldots,r .
\ea \right.
\end{equation}
In the sequel we use these definitions to identify $(\ud \rho,\nu)$ and $\ud \eta$, and we denote
\begin{equation}
 [\eta_{i,\tau}] := \ud \eta_i( \{ \tau \}).
\end{equation}
Recall that $\Lambda$ is the set of Lagrange multipliers associated with $(\bar u,\bar y)$ (definition~\ref{lagrangemult}).
We have a result similar to lemma~\ref{coupe}:

\begin{mult1} \label{mult1}
We have that $(\ud \rho,\nu,\Psi) \in \Lambda_R$ \textnormal{iff} $(\ud \eta,\Psi,p) \in \Lambda$,
with $p$ the unique solution of \eqref{adjfinal}.
\end{mult1}

\begin{proof}
With the identification between $(\ud \rho,\nu)$ and $\ud \eta$ given by \eqref{eta_def} and \eqref{rhonu},
it is clear that \eqref{normalrho}-\eqref{normalnu} are equivalent to \eqref{multeta}.
Let these relations be satisfied by $(\ud \rho,\nu,\Psi)$ and $(\ud \eta,\Psi)$. Then in particular
\begin{equation} \label{support}
\begin{array}{ll}
\supp(\ud \eta_i) =\supp(\ud \rho_i) \subset \I_i & i= 1,\ldots,r_1, \\
\supp(\ud \eta_i) =\supp(\ud \rho_i)\cup \supp(\sum \nu_{i,\tau} \delta_\tau) \subset \I_i \cup \mathcal T_i & i= r_1 + 1,\ldots,r.
\end{array}
\end{equation}
We claim that in this case \eqref{normalstation} is equivalent to \eqref{multadj} and \eqref{multstation}.
Indeed, using $H[p]$ defined by \eqref{Ham}, $\Phi[\Psi]$ by \eqref{endLag},
the integration by parts formula \eqref{ippform} and \eqref{support}, we have
\begin{multline} \label{formulatodiff}
 L_R (u,y_0, \ud \rho,\nu, \Psi)= \int _{[0,T]} \left( H[p](t,u_t,y_t) \ud t +  \ud p_t y_t \right) + p_{0_-}y_0 - p_{T_+}y_T   \\
 +\sum_{1 \le i \le r} \int_{\I_i}  g_i(y_t) \ud \eta_{i,t}
+ \sum_{\substack{\tau \in \mathcal T_i \\ r_1< i \le r}} [\eta_{i,\tau}] \mu_{\tau}\left(g_i(y )\right)  + \Phi[\Psi](y_0,y_T)
\end{multline}
for any $p \in \cP$ and $y=y[u,y_0]$.
Let us differentiate (say for $i>r_1$)
\begin{equation} \label{contrib}
\int_{\I_i}  g_i(y_t) \ud \eta_{i,t}
+ \sum_{\tau \in \mathcal T_i}[\eta_{i,\tau}] \mu_{\tau}\left(g_i(y )\right)
\end{equation}
w.r.t. $(u,y_0)$ at $(\bar u,\bar y_0)$ in the direction $(v,z_0)$ and use \eqref{mu1} and \eqref{support}; we get
\begin{equation*}
\int_{\I_i} g_i'(\bar y_t)z_t \ud \eta_{i,t} 
 + \sum_{\tau \in \mathcal T_i}[\eta_{i,\tau}]D\mu_{\tau}\left(g_i(\yb )\right)(g_i'(\yb)z)
 =  \int_{[0,T]} g_i'(\bar y_t)z_t \ud \eta_{i,t} 
\end{equation*}
where $z=z[v,z_0]$.
Let us now differentiate similarly the whole expression \eqref{formulatodiff} of $L_R$; we get
\begin{multline}\label{previous}
 \int_0^T D_u H[p](t,\bar u_t,\bar y_t)v_t \ud t 
+\int_{[0,T]} \big( D_y H[p](t,\bar u_t,\bar y_t)\ud t +\ud p_t +\ud \eta_t g'(\yb_t) \big)z_t \\
+\big( p_{0_-} + D_{y_1}\Phi[\Psi](\yb_0,\yb_T) \big)z_0
+\big( -p_{T_+} + D_{y_2}\Phi[\Psi](\yb_0,\yb_T) \big)z_T .
\end{multline}
Fixing $p$ as the unique solution of \eqref{adjfinal} in \eqref{previous} gives
\begin{multline*} 
 D_{(u,y_0)}L_R(\bar u,\bar y_0,\ud \rho,\nu,\Psi)(v,z_0) = \int_0^T D_u H[p](t,\bar u_t,\bar y_t)v_t \ud t\\ 
+ \big(p_{0_-} + D_{y_1}\Phi[\Psi](\bar y_0,\bar y_T) \big) z_0 .
\end{multline*}
It is now clear that \eqref{normalstation} is equivalent to \eqref{multadj} and \eqref{multstation}.
\end{proof}

\paragraph{}

For the second-order optimality conditions, we need to evaluate the Hessian of $L_R$.
For $\lambda =( \ud \eta,\Psi,p) \in \Lambda$,
$(v,z_0) \in \U \times \R^n$ and $z=z[v,z_0] \in \Y$, we denote
\begin{multline} \label{new}
 \mathcal J[\lambda](v,z_0)   :=  \int_0^T D^2_{(u,y)^2}H[p](t,\bar u_t,\bar y_t)(v_t,z_t)^2\ud t + D^2\Phi[\Psi](\bar y_0,\bar y_T)(z_0,z_T)^2 \\
 + \sum_{1 \le i \le r} \int_{\I_i} g_i''(\bar y_t)(z_t)^2  \ud \eta_{i,t} \\
+ \sum_{\substack{\tau \in \mathcal T_i \\ r_1< i \le r}}
[\eta_{i,\tau}]\left[ g_i''(\yb_\tau)(z_\tau)^2+D^2\mu_{\tau}\left(g_i(\yb )\right)(g_i'(\yb)z)^2 \right].
 \end{multline}
In view of \eqref{mu4} and \eqref{support}, we could also write
 \begin{multline} \label{old}
 \mathcal J[\lambda](v,z_0) =  \int_0^T D^2_{(u,y)^2}H[p](t,\bar u_t,\bar y_t)(v_t,z_t)^2 \ud t + D^2\Phi[\Psi](\bar y_0,\bar y_T)(z_0,z_T)^2 \\
 + \int_{[0,T]} \ud \eta_t g''(\bar y_t)(z_t)^2 
 - \sum_{\substack{\tau \in \mathcal T_i \\ r_1< i \le r}}
 [\eta_{i,\tau}]\frac{\left( \D g_i^{(1)}(\tau,\bar y_\tau,\bar u,\bar y)(z_\tau,v,z)\right)^2}
 {g_i^{(2)}(\tau,\bar u_\tau,\bar y_\tau,\bar u,\bar y)} .
 \end{multline}

\begin{mult2} \label{mult2}
 Let $(\ud \rho,\nu,\Psi) \in \Lambda_R$. Let $\lambda=(\ud \eta,\Psi,p) \in \Lambda$ be as in lemma~\ref{mult1}.
Then for all $(v,z_0) \in \U \times \R^n$,
\begin{equation}
   D^2_{(u,y_0)^2}L_R(\bar u,\bar y_0,\ud \rho,\nu,\Psi)(v,z_0)^2  = \mathcal J[\lambda](v,z_0).
\end{equation}

\end{mult2}

\begin{proof}
We will use \eqref{formulatodiff} and \eqref{contrib} from the previous proof.
First we differentiate \eqref{contrib} twice w.r.t. $(u,y_0)$ at $(\bar u,\bar y_0)$ in the direction $(v,z_0)$.
Denoting $z=z[v,z_0]$ and $z^2=z^2[v,z_0]$, defined by \eqref{secondlin}, we get
%  We go back to \eqref{formulatodiff} and we differentiate it twice
% with respect to $(u,y_0)$, at $(\bar u,\bar y_0)$, in the direction $(v,z_0)$.
\begin{multline*}
\int_{\I_i}\left( g_i''(\bar y_t)(z_t)^2 + g_i'(\bar y_t)z^2_t \right) \ud \eta_{i,t} \\
 +\sum_{\tau \in \mathcal T_i}[\eta_{i,\tau}] \left[ 
 D^2\mu_{\tau}\left(g_i(\yb )\right)(g_i'(\yb)z)^2 
 +   D\mu_{\tau}\left(g_i(\yb )\right) \big( g_i''(\yb)(z)^2 + g_i'(\yb)z^2 \big)  \right]\\
 = \int_{\I_i} g_i''(\bar y_t)(z_t)^2  \ud \eta_{i,t} +  \int_{[0,T]} g_i'(\bar y_t)z^2_t \ud \eta_{i,t} \\
+\sum_{\tau \in \mathcal T_i} [\eta_{i,\tau}]\left[D^2\mu_{\tau}\left(g_i(\yb )\right)(g_i'(\yb)z)^2 +g_i''(\yb_\tau)(z_\tau)^2 \right] ,
\end{multline*}
where we have used remark~\ref{muepsuite}, \eqref{mu1} and \eqref{support}.
Second we differentiate $L_R$ twice using \eqref{formulatodiff}
and then we fix $p$ as the unique solution of \eqref{adjfinal}.
The result follows as in the proof of lemma~\ref{mult1}.
\end{proof}

\paragraph{}
Suppose that $\Lambda \ne \emptyset$  
and let $\bar \lambda=(\ud \bar \eta,\bar \Psi,\bar p) \in  \Lambda$.
We define the \emph{critical $L^2$ cone} as the set $C_2$ of $(v,z_0)\in \V_2 \times  \R^n$ such that
\begin{align}
 & \left\{ \begin{array}{ll}
             g_i'(\bar y)z \le 0 &  \text{on } \I_i,\\
             g_i'(\bar y)z =0 & \text{on } \supp\left(\ud \bar \eta_i \right)\cap \I_i, 
           \end{array} \right.
\ i=1,\ldots,r, \label{critical1}\\
&  \left\{ \begin{array}{l}
            g_i'(\bar y_\tau)z_\tau \le 0, \\ 
	    \left[\bar \eta_{i,\tau} \right] g_i'(\bar y_\tau)z_\tau= 0,
           \end{array}\right.
\ \tau \in \mathcal T_i, \ i=r_1+1, \ldots,r , \label{critical2}\\
 & \left\{ \begin{array}{l}
            D\Phi(\bar y_0,\bar y_T)(z_0,z_T) \in T_K \left(\Phi(\bar y_0,\bar y_T)\right), \\
            \bar \Psi D\Phi(\bar y_0,\bar y_T)(z_0,z_T)=0,
           \end{array} \right. \label{critical3}
\end{align}
where $z=z[v,z_0] \in \Z_2$.
Then the \emph{critical cone} for ($P_R$) 
(see Proposition~3.10 in~\cite{MR1756264}) is the set
\begin{equation*}
 C_\infty := C_2 \cap \left(\U \times \R^n \right),
\end{equation*}
and the \emph{cone of radial critical directions} for ($P_R$)
(see Definition~3.52 in \cite{MR1756264}) is the set
\begin{equation*}
  C^R_\infty := \left\{ (v,z_0)\in C_\infty \, : \, \exists \bar \sigma > 0 \, : \, 
g_i(\bar y) + \bar \sigma g_i'(\bar y)z \le 0 \ \text{on } \I_i^{\e}, \ i=1,\ldots,r \right\},
\end{equation*}
where $z=z[v,z_0] \in \Y$.
These three cones do not depend on the choice of $\bar \lambda$.
In view of lemma~\ref{mult2}, the second-order necessary conditions for ($P_R$) can be written as follows:
\begin{absecond} \label{absecond}
Let $(\bar u,\bar y_0)$ be a qualified local solution of ($P_R$).
Then for any $(v,z_0) \in C^R_\infty$, there exists $\lambda \in \Lambda$ such that
\begin{equation}
  \mathcal J[\lambda](v,z_0) \ge 0 .
\end{equation}
\end{absecond}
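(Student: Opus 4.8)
The plan is to derive Lemma~\ref{absecond} from the standard abstract second-order necessary conditions for optimization problems in Banach spaces under Robinson's constraint qualification, applied to the reduced problem $(P_R)$. By Remark~\ref{link}, $(\bar u,\bar y_0)$ being a qualified local solution of $(P_R)$ means that Robinson's constraint qualification holds there, and by Lemma~\ref{firstreduced} the set $\Lambda_R$ is nonempty, convex, bounded and weakly-$*$ compact. The key structural point, which was the whole motivation for introducing the reduced problem, is that $(P_R)$ satisfies an \emph{extended polyhedricity condition} (Definition~3.52 in \cite{MR1756264}): the cone $C^R_\infty$ of radial critical directions is a ``dense enough'' subset of the critical cone $C_\infty$ in the sense required there. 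This is because the constraint sets $K_1 = \prod_i C_-(\I_i^\e)$, $K_2 = (\R_-)^N$ and $K_3 = K$ are, respectively, polyhedric-type cones in $C(\I_i^\e)$, polyhedral in finite dimension, and polyhedral in finite dimension; the only ``non-polyhedric'' behaviour in the original problem came from the touch points, and those have been reduced away via the functionals $\mu_\tau$ of Lemma~\ref{muep}.

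Concretely, I would proceed as follows. First, recall the abstract result (Remark~3.47 / Theorem~3.45 and Theorem~3.109 in \cite{MR1756264}): if $x_0$ is a local solution of $\min\{J(x) : G(x) \in K\}$ with $G$ of class $C^2$, $K$ closed convex, Robinson's CQ holding at $x_0$, and if the problem is ``extended-polyhedric'' at $x_0$, then for every critical direction $d$ in the radial critical cone there exists a Lagrange multiplier $\lambda$ such that $D^2_{xx}L(x_0,\lambda)(d)^2 \ge 0$ (the usual ``sigma-term'' $\sigma(\lambda, \cdot)$ vanishing on radial directions under extended polyhedricity). Second, identify the pieces: here $x_0 = (\bar u,\bar y_0)$, the space is $\U \times \R^n$, the map is $G = (G_1,G_2,G_3)$ into the product of $K_1 \times K_2 \times K_3$, the Lagrangian is $L_R$, and the critical cone and radial critical cone are exactly $C_\infty$ and $C^R_\infty$ as defined in the excerpt. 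Third, invoke Lemma~\ref{mult2}: for $(\ud\rho,\nu,\Psi) \in \Lambda_R$ and the corresponding $\lambda = (\ud\eta,\Psi,p) \in \Lambda$ (via Lemma~\ref{mult1}), we have $D^2_{(u,y_0)^2}L_R(\bar u,\bar y_0,\ud\rho,\nu,\Psi)(v,z_0)^2 = \mathcal J[\lambda](v,z_0)$. Combining, for $(v,z_0) \in C^R_\infty$ we obtain $\lambda \in \Lambda$ with $\mathcal J[\lambda](v,z_0) \ge 0$, which is the claim.

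The step that needs the most care is verifying the extended polyhedricity of $(P_R)$, i.e. that the curvature ($\sigma$-) term of \cite{MR1756264} indeed vanishes on $C^R_\infty$. For the $K_1$-block one uses that $C(\I_i^\e)$ with the cone of nonpositive functions is polyhedric in the sense that the radial directions (those $x$ with $g_i(\bar y) + \bar\sigma g_i'(\bar y)z \le 0$ on $\I_i^\e$ for some $\bar\sigma>0$) are dense in the critical cone and carry no second-order curvature contribution — this is the classical fact underlying, e.g., Bonnans--Hermant, and it is precisely why order-one constraints were separated out and why the sets $\I_i^\e$ (enlarged contact sets) were used. For the $K_2$-block, since $K_2$ is a polyhedral cone in $\R^N$ the $\sigma$-term is automatically zero; the only subtlety is that $G_2$ involves the $\mu_\tau$, which are merely $C^1$ near $g_i(\bar y)$ and twice Fréchet differentiable \emph{at} $g_i(\bar y)$ (Lemma~\ref{muep}) — one checks that this regularity, together with the chain rule through $\Gamma$ (which is $C^\infty$), suffices to apply the abstract theorem, the second derivative of $\mu_\tau \circ g_i \circ y[\cdot]$ being exactly the bracketed term in \eqref{new}. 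The $K_3$-block is finite-dimensional and polyhedral, hence harmless. Once these verifications are in place, the proof is a direct citation of the abstract second-order necessary conditions followed by Lemmas~\ref{mult1} and~\ref{mult2}.
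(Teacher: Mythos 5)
Your overall route is the paper's: apply the abstract second-order necessary conditions to $(P_R)$ under Robinson's constraint qualification and translate the Hessian of $L_R$ into $\mathcal J[\lambda]$ via Lemmas~\ref{mult1} and~\ref{mult2}; the paper compresses the abstract step into a citation of Corollary~5.1 of \cite{MR941318}, which is precisely the statement that on \emph{radial} critical directions the curvature (envelope) term can be dropped. The problem is your third paragraph, where you conflate two distinct things: (i) the fact that for a radial critical direction the $\sigma$-term is harmless, and (ii) extended polyhedricity, i.e.\ density of $C^R_\infty$ in the critical cone. Only (i) is needed for Lemma~\ref{absecond}, and it requires no verification of any polyhedricity: if $(v,z_0)\in C^R_\infty$, then $G_1(\bar u,\bar y_0)+\bar\sigma\, DG_1(\bar u,\bar y_0)(v,z_0)\in K_1$ for some $\bar\sigma>0$, so by convexity $0$ lies in the (inner) second-order tangent set to $K_1$ and $\sigma(\lambda,\cdot)\ge 0$ for every multiplier; the $K_2$ and $K_3$ blocks are finite-dimensional polyhedral, so they contribute no curvature term either (that part of your paragraph is fine). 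One then gets a multiplier achieving $D^2_{(u,y_0)^2}L_R\ge 0$ using the weak-$*$ compactness of $\Lambda_R$ (Lemma~\ref{firstreduced}) and the continuity of $\lambda\mapsto D^2L_R$.

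Point (ii), by contrast, is needed only later, to pass from $C^R_\infty$ to $C_2^S$ (Theorem~\ref{second}), and your claimed verification of it for the $K_1$-block is wrong: the cone of nonpositive functions is \emph{not} polyhedric in $C(\I_i^\e)$ --- that failure is exactly the origin of Kawasaki's envelope term --- and the density of radial critical directions is not a ``classical fact'' but the hard content of Lemma~\ref{polyhedricity}, which requires the extra assumptions \textbf{(A2)}--\textbf{(A4)} and the technical Lemma~\ref{bigresult} (the paper even points out that the proof of the analogous ODE statement in \cite{MR2779110} is flawed). Lemma~\ref{absecond} is stated in the weak-results section under \textbf{(A0)}--\textbf{(A1)} only, so no such density is available there, nor is it needed. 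If you delete the ``extended polyhedricity of $(P_R)$'' detour and keep only the observation that radial directions annihilate the curvature term, your argument (abstract second-order necessary conditions for $(P_R)$, then Lemmas~\ref{mult1} and~\ref{mult2}) coincides with the paper's proof.
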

\begin{proof}
Corollary~5.1 in \cite{MR941318}.
\end{proof}

\section{Strong results} \label{strong}

Recall that $(\ub,\yb)$ is a feasible trajectory that has been fixed to define the reduced problem at the beginning of section~\ref{reducedpbdef}.

\subsection{Extra assumptions and consequences}

We were so far under the assumptions \textbf{(A0)}-\textbf{(A1)}.
We make now some extra assumptions, which will imply a partial qualification of the running state constraints,
as well as the density of $C^R_\infty$ in a larger critical cone.
\begin{description}
 \item[(A2)] Each running state constraint $g_i, i=1 , \ldots,r$ is of finite order $q_i$.
 \item[Notations]
Given a subset $J\subset \{1,\ldots,r\}$, say $J=\{i_1<\cdots<i_l\}$, we define
$G^{(q)}_J \colon \R \times \R^m \times \R^n \times \U \times \Y \rightarrow \R^{|J|}$ by
\begin{equation} \label{nota}
 G^{(q)}_J(t,\tilde u,\tilde y,u,y):=
\begin{pmatrix}
 \bar g_{i_1}^{(q_{i_1})}(t,\tilde u,\tilde y,u,y) \\
 \vdots\\
 \bar g_{i_l}^{(q_{i_l})}(t,\tilde u,\tilde y,u,y)
\end{pmatrix}.
\end{equation}
For $\e_0 \ge 0$ and $t \in [0,T]$, let
\begin{align}
 I^{\e_0}_t & := \left\{ 1 \le i \le r \, : \, t \in \I^{\e_0}_i \right\}, \\
 M_t^{\e_0} & := D_{\tilde u}G^{(q)}_{I^{\e_0}_t}(t,\bar u_t,\bar y_t,\bar u,\bar y) \in \R^{|I^{\e_0}_t|} \times \R^{m*}.
\end{align}
 \item[(A3)] There exists $\e_0, \gamma >0$ such that, for all $t\in [0,T]$,
 \begin{equation}
\left|  (M_t^{\e_0})^T \xi \right| \ge \gamma \left|\xi\right| 
\quad \forall \xi \in \R^{|I^{\e_0}_t|} . \label{linindep}
 \end{equation} 
 \item[(A4)] The initial condition satisfies $g(\bar y_0) <0 $ and the final time $T$ is not an entry point
(i.e. there exists $\tau <T$ such that the set $I_t^0$ of active constraints at time $t$ is constant for $t \in (\tau, T]$).
\end{description}

\begin{assump} \label{assump}
 \begin{enumerate}
 \item We do not assume that $\ub$ is continuous, as was done in \cite{MR2779110}.
%  \item \textbf{A4} implies that each junction time is the end point of two arcs.
 \item Recall that $\e$ has been fixed to define the reduced problem. Without loss of generality we suppose that $\e_0 > \e$,
$\e_0 < \min\{ \tau \, : \, \tau \text{ junction times} \}$ and $2\e_0< \min \{ |\tau-\tau'| \, : \, \tau,\tau' \text{ distinct junction times} \}$.
We omit it in the notation $M_t^{\e_0}$.
 \item In some cases, we can treat the case where $ T $ is an entry point, say for the constraint $g_i$:
\begin{itemize}
 \itr if $1\le i \le r_1$ (i.e. if $q_i=1$), then what follows works similarly.
 \itr if $r_1 < i \le r$ (i.e. if $q_i >1$) and $\frac{\ud}{\ud t}g_i(\bar y_t) |_{t=T} > 0$,
then we can replace in the reduced problem $g_i(y[u,y_0])|_{[T-\e,T]} \le 0$ by the final constraint $g_i(y[u,y_0]_T) \le 0$.
\end{itemize}
\item By \textbf{(A1)}, we can write 
\begin{equation} \label{decompoT}
 [0,T] =  J_0 \cup \cdots \cup J_\kappa
\end{equation}
where $J_l$ ($l=0,\ldots, \kappa$) are the maximal intervals in $[0,T]$
such that $I^{\e_0}_t$ is constant (say equal to $I_l$) for $t \in J_l$.
We order $J_0,\ldots,J_\kappa$ in $[0,T]$.
Observe that for any $l \ge 1$, $ \overline{J_{l-1}} \cap \overline{J_l} = \{\tau \pm \e_0 \}$ with $\tau$ a junction time.
\end{enumerate}
\end{assump}

\paragraph{}
For $s \in [1, \infty]$, we denote
\begin{equation}
  W^{(q),s}([0,T]):= \prod_{i=1}^r W^{q_i,s}([0,T]), \quad  W^{(q),s}(\I^\e):=\prod_{i=1}^r W^{q_i,s}(\I_i^\e),
\end{equation}
and for $\varphi = \begin{pmatrix}
\varphi_1\\
\vdots\\
\varphi_r
\end{pmatrix} \in  W^{(q),s}([0,T])$,
$\varphi|_{\I^\e} := \begin{pmatrix}
\varphi_1|_{\I_1^\e}\\
\vdots\\
\varphi_r|_{\I_r^\e}
\end{pmatrix} \in W^{(q),s}(\I^\e)$.

Using lemma~\ref{derivative} we define, for $s \in [1, \infty]$ and $z_0 \in \R^n$,
 \begin{align}
\A_{s,z_0} \colon \V_s & \longrightarrow  W^{(q),s}([0,T]) \nonumber \\
v& \longmapsto g'(\bar y)z[v,z_0]  . 
 \end{align}

\paragraph{}
We give now the statement of a lemma in two parts, which will be of great interest for us
(particularly in section~\ref{densityresult}).
The proof is technical and can be skipped at a first reading. It is given in the next section.

\begin{bigresult} \label{bigresult}

\begin{enumerate}[a)]
 \item  Let $s \in [1, \infty]$ and $z_0 \in \R^n$.
Let $\bb \in W^{(q),s}(\I^\e)$.
Then there exists $v \in \V_s$ such that
\begin{equation} 
 \left( \A_{s,z_0}v\right) |_{\I^\e} =  \bb  .
\end{equation}
\item Let $z_0 \in \R^n$.
Let $(\bb,\vb) \in W^{(q),2}(\I^\e)\times \V_2$ be such that
\begin{equation} \label{hypb}
\left( \A_{2,z_0}\vb \right) |_{\I^\e} =  \bb.
\end{equation}
Let $b^k \in W^{(q),\infty}(\I^\e)$, $k\in \NN$, be such that $b^k \xrightarrow[]{W^{(q),2}(\I^\e)} \bb$.
Then there exists $v^k \in \U$, $k \in \NN$, such that $v^k\xrightarrow[]{L^2}\vb$ and
\begin{equation}
 \left( \A_{\infty,z_0}v^k\right) |_{\I^\e} =  b^k.
\end{equation}
\end{enumerate}

\end{bigresult}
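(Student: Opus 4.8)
The plan is to reduce b) to a); the core is a), which I would prove by constructing an explicit \emph{bounded linear right inverse} $R_{s,z_0}$ of the operator $v\mapsto(\A_{s,z_0}v)|_{\I^\e}$, by an arc‑by‑arc construction using the finiteness of junction times (\textbf{(A1)}) and the partial surjectivity (\textbf{(A3)}). Write $\varphi_i:=g_i'(\bar y)z[v,z_0]$. By lemma~\ref{derivative}, for $j<q_i$ the derivative $\varphi_i^{(j)}$ depends on $v$ only through $\int_0^{\cdot}v$ (hence is continuous and ``causal‑smoothing''), while $\varphi_i^{(q_i)}(t)=M_{i,t}v_t+\Theta_{i,t}[v]$, with $M_{i,t}:=D_{\tilde u}g_i^{(q_i)}(t,\bar u_t,\bar y_t,\bar u,\bar y)$ and $\Theta_{i,t}[v]:=\D g_i^{(q_i)}(t,\bar u_t,\bar y_t,\bar u,\bar y)(z_t,v,z)$, where $z=z[v,z_0]$ and $\Theta_{i,t}[v]$ depends on $v$ only through $z$ and $\int_0^{\cdot}v$. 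Using the decomposition $[0,T]=J_0\cup\cdots\cup J_\kappa$ of remark~\ref{assump}, on each $J_l$ the $\e_0$‑active set $I_l=I^{\e_0}_t$ is constant and, by \textbf{(A3)}, the matrix $M_t=(M_{i,t})_{i\in I_l}=M^{\e_0}_t$ has a right inverse $M_t^+$ with $\|M_t^+\|\le1/\gamma$. On $J_l$, for each $i\in I_l$ I prescribe a target $c_{i,\cdot}$ for $\varphi_i^{(q_i)}$: namely $c_{i,t}:=\bb_i^{(q_i)}(t)$ on the tracking region $\I_i^\e$, and a smooth (polynomial) steering function on the complementary ``buffer'' portion of $J_l$; then $(v,z)$ is defined on $J_l$ as the unique solution of the linear Volterra system $v_t=M_t^+\big((c_{i,t})_{i\in I_l}-(\Theta_{i,t}[v])_{i\in I_l}\big)$, $z=z[v,z_0]$, with $z$ and $v$ on $[0,a_l]$ inherited from the preceding arcs (on $J_0$ the active set is empty by \textbf{(A4)}, so one takes $v=0$ there; well‑posedness by contraction, cf. lemma~\ref{estimsobo}).

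The steering functions are chosen recursively from left to right so that at the left endpoint of each connected component of $\I_i^\e$ the Cauchy data $(\varphi_i,\dot\varphi_i,\ldots,\varphi_i^{(q_i-1)})$ equals $(\bb_i,\dot\bb_i,\ldots,\bb_i^{(q_i-1)})$. This is possible because (i) each such entry point is preceded by a buffer of length $\ge\e_0-\e>0$, thanks to $\e_0>\e$ and $\e_0$ being smaller than the spacing of junction times (remark~\ref{assump}); (ii) on that buffer $(\varphi_i,\dot\varphi_i,\ldots,\varphi_i^{(q_i-1)})$ solves a controllable linear system with input $\varphi_i^{(q_i)}$ (a chain of $q_i$ integrators), hence is steerable to an arbitrary value; (iii) \textbf{(A3)} lets the $\varphi_i^{(q_i)}$, $i\in I_l$, be assigned independently at the same instant, so steering one constraint does not disturb tracking of another. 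By \textbf{(A4)} no component of $\I_i^\e$ reaches $t=0$ (so the recursion is correctly initialized on $J_0$) and the active set is constant near $T$ (so the construction runs all the way to $T$). On each component of $\I_i^\e$, $\psi_i:=\varphi_i-\bb_i$ then satisfies $\psi_i^{(q_i)}\equiv0$ with vanishing Cauchy data at the left endpoint, hence $\psi_i\equiv0$, i.e. $(\A_{s,z_0}v)|_{\I^\e}=\bb$. The map $(\bb,z_0)\mapsto v$ so defined is linear, and the Gronwall estimates of lemma~\ref{estimsobo} together with $\|M_t^+\|\le1/\gamma$ give $\|v\|_{\V_s}\le C\big(\|\bb\|_{W^{(q),s}(\I^\e)}+|z_0|\big)$ with $C$ independent of $s\in[1,\infty]$; in particular $\bb\in W^{(q),\infty}(\I^\e)$ yields $v\in\U$. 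Denote this right inverse by $R_{s,z_0}$.

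For b), apply a) with $z_0=0$. Take truncations $\hat v^k:=\vb\,\mathbf 1_{\{|\vb|\le k\}}\in\U$, so $\hat v^k\to\vb$ in $\V_2$; put $\hat b^k:=(\A_{\infty,z_0}\hat v^k)|_{\I^\e}$, which lies in $W^{(q),\infty}(\I^\e)$ by lemma~\ref{derivative} and, by continuity of the affine map $\A_{2,z_0}$ together with \eqref{hypb}, satisfies $\hat b^k\to(\A_{2,z_0}\vb)|_{\I^\e}=\bb$ in $W^{(q),2}(\I^\e)$. Hence $b^k-\hat b^k\in W^{(q),\infty}(\I^\e)$ tends to $0$ in $W^{(q),2}(\I^\e)$, so $w^k:=R_{\infty,0}(b^k-\hat b^k)\in\U$ obeys $(\A_{\infty,0}w^k)|_{\I^\e}=b^k-\hat b^k$ and $\|w^k\|_{\V_2}\le C\|b^k-\hat b^k\|_{W^{(q),2}(\I^\e)}\to0$. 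Since $z[\cdot,\cdot]$ is affine in $(v,z_0)$, one has $\A_{\infty,z_0}(\hat v^k+w^k)=\A_{\infty,z_0}\hat v^k+\A_{\infty,0}w^k$, so $v^k:=\hat v^k+w^k\in\U$ satisfies $v^k\to\vb$ in $L^2$ and $(\A_{\infty,z_0}v^k)|_{\I^\e}=\hat b^k+(b^k-\hat b^k)=b^k$, as required.

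The main obstacle is the arc‑by‑arc construction in a): matching the full Cauchy data $(\varphi_i,\ldots,\varphi_i^{(q_i-1)})$ at every entry of a tracking region, simultaneously for all active constraints, while keeping the right inverse bounded with a constant uniform in $s\in[1,\infty]$. This uniform bound is precisely what makes the density step b) go through, and is the point at which the corresponding argument in \cite{MR2779110} is incomplete.
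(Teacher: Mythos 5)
Your part a) is essentially the paper's argument in a different packaging: you use the same decomposition $[0,T]=J_0\cup\cdots\cup J_\kappa$, the same pointwise right inverse $M_t^T(M_tM_t^T)^{-1}$ furnished by \textbf{(A3)}, and a causal (Volterra) fixed point to assign the $q_i$-th derivatives on each $J_l$; your ``steering on buffers of length $\ge\e_0-\e$ via an integrator chain'' is exactly what the paper's connection lemmas (lemmas~\ref{raccords} and~\ref{rempl}) do, only you steer in the input space instead of first extending the target $\bb$ to $\bt$ on all of $[0,T]$. Where you genuinely diverge is part b): the paper re-runs the whole arc-by-arc construction along the approximating sequence, using the sequential-continuity versions (lemmas~\ref{raccords}~b) and~\ref{resol}~b), the latter by a contraction-with-parameter argument and truncation), whereas you upgrade a) to a single \emph{linear} right inverse $R$ that is bounded $W^{(q),2}(\I^\e)\to\V_2$ and maps $W^{(q),\infty}(\I^\e)$ into $\U$, and then get b) in three lines from linearity of $(v,z_0)\mapsto z[v,z_0]$, truncation of $\vb$, and $w^k:=R(b^k-\hat b^k)$. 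This is cleaner and preserves the feature the paper insists on (the correctors, hence the $v^k$, are essentially bounded), but it shifts the burden onto a claim the paper never states or needs: that the construction in a) can be made into one $s$-independent linear operator with the two-norm estimate $\|Rb\|_2\le C\|b\|_{W^{(q),2}}$. Your sketch of why this holds (pseudo-inverse bound $1/\gamma$, Gronwall, linear polynomial steering determined by Cauchy data that are themselves linear and $W^{(q),2}$-continuous traces, finitely many breakpoints by \textbf{(A1)}) is convincing, but those points — in particular the linearity of the recursively chosen steering functions and the continuity of the adaptive targets with respect to $\bb$ — are exactly where your write-up would need the detail that the paper instead supplies through the parts b) of its two auxiliary lemmas.
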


\subsection{A technical proof}
In this section we prove lemma~\ref{bigresult}.
The proofs of a) and b) are very similar; in both cases we proceed in $\kappa+1$ steps using the decomposition
\eqref{decompoT} of $[0,T]$.
At each step, we will use the following two lemmas, proved in appendixes~\ref{noass} and \ref{ass3}, respectively.

The first one uses only \textbf{(A1)} and the definitions that follow.

\begin{raccords} \label{raccords}
Let $t_0 : = \tau \pm \e_0$ where $\tau$ is a junction time.
\begin{enumerate}[a)]
 \item  Let $s \in [1, \infty]$ and $z_0 \in \R^n$.
Let $(\bb,v) \in W^{(q),s}(\I^\e)\times \V_s$ be such that
\begin{equation}
\left( \A_{s,z_0}  v \right) |_{\I^\e } =  \bb \ \text{on } [0,t_0].
\end{equation}
Then we can extend $\bb$
% (defined on $\I^\e$)
to $\bt \in W^{(q),s}([0,T])$
in such a way that
\begin{equation} \label{hypa1}
 \bt =  \A_{s,z_0} v \ \text{on } [0,t_0] . 
\end{equation}
\item Let $z_0 \in \R^n$.
Let $(\bb,\vb) \in W^{(q),2}(\I^\e)\times \V_2$ be such that
\begin{equation}
 \left( \A_{2,z_0} \vb \right) |_{\I^\e} =  \bb .
\end{equation}
Let $(b^k,v^k)\in W^{(q),\infty}(\I^\e)\times \U $, $k \in \NN$, be such that
$(b^k,v^k)\xrightarrow[]{W^{(q),2}\times L^2} (\bb,\vb)$ and
\begin{equation}
\left( \A_{\infty,z_0}v^k \right) |_{\I^\e } =  b^k\ \text{on } [0,t_0].
\end{equation}
Then we can extend $b^k$ to $\bt^k \in W^{(q),\infty}([0,T])$, $k \in \NN$, in such a way that
$\bt^k \xrightarrow[]{W^{(q),2}([0,T])} \A_{2,z_0} \vb$ and
\begin{equation}
 \bt^k = \A_{\infty,z_0} v^k \ \text{on } [0,t_0] .
\end{equation}
\end{enumerate}
\end{raccords}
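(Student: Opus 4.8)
The plan is to prove both parts by an explicit extension argument near the single point $t_0 = \tau \pm \e_0$, exploiting the geometry fixed in Remark~\ref{assump}: by the choice of $\e_0$, the point $t_0$ lies at distance exactly $\e_0 - \e > 0$ from the contact set $\I_i$ of each active constraint, so $\I_i^\e$ does not reach $t_0$ and there is a small half-interval $(t_0 - \delta, t_0]$ (or $[t_0, t_0 + \delta)$) on which none of the running state constraints is active. First I would reduce to showing that $\A_{s,z_0}v$ and the given datum $\bb$ can be glued into a single function $\bt \in W^{(q),s}([0,T])$: on $[0,t_0]$ we are forced to take $\bt = \A_{s,z_0}v$, which by Lemma~\ref{derivative} and Lemma~\ref{estimsobo} already lies in $W^{(q),s}$; on $\I_i^\e$ we are forced to take $\bt_i = \bb_i$; the hypothesis $(\A_{s,z_0}v)|_{\I^\e} = \bb$ on $[0,t_0]$ says these prescriptions agree on the overlap $\I_i^\e \cap [0,t_0]$. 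The only issue is to fill in the ``gap'' between $t_0$ and the part of $\I_i^\e$ lying to the right of $t_0$, which by the spacing assumption $2\e_0 < \min|\tau - \tau'|$ is a genuine positive-length interval $(t_0, \sigma_i)$ with $\sigma_i > t_0$, on which nothing is prescribed.

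Next I would carry out the extension componentwise in $i$. For each $i$ with $q_i = q_i$, the function $\varphi_i := \A_{s,z_0}v$ restricted to a left-neighbourhood of $t_0$ has $W^{q_i,s}$ regularity, hence well-defined one-sided values $\varphi_i^{(j)}(t_0^-)$ for $j = 0,\dots,q_i-1$; similarly $\bb_i$ on $\I_i^\e$ has well-defined one-sided values $\bb_i^{(j)}(\sigma_i^+)$ at the left endpoint $\sigma_i$ of that component. I would then choose, on $[t_0,\sigma_i]$, a polynomial (or smooth) interpolant matching these $2q_i$ derivative data at the two endpoints — a standard Hermite-type construction — and set $\bt_i$ equal to $\varphi_i$ on $[0,t_0]$, to this interpolant on $[t_0,\sigma_i]$, and to $\bb_i$ further right; on the remaining (rightmost) part of $[0,T]$ past the support of $\I_i^\e$ just extend by anything of class $W^{q_i,s}$ (e.g. constant-after-Taylor). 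The matching of $q_i$ one-sided derivatives at each junction guarantees $\bt_i \in W^{q_i,s}([0,T])$, and by construction $\bt = \A_{s,z_0}v$ on $[0,t_0]$, giving \eqref{hypa1}. Part a) follows.

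For part b) I would run the same construction but track norms. Because the interpolant on $[t_0,\sigma_i]$ depends linearly and boundedly on the finitely many derivative values at the endpoints, and those endpoint values depend continuously (indeed boundedly, via Lemma~\ref{estimsobo} and the Sobolev trace/embedding $W^{q_i,s} \hookrightarrow C^{q_i-1}$) on $v^k$ in $L^2$ and on $b^k$ in $W^{(q),2}(\I^\e)$, the extensions $\bt^k$ are obtained from $(b^k,v^k)$ by a fixed bounded linear-plus-constant operator into $W^{(q),2}([0,T])$. Hence $(b^k,v^k) \to (\bb,\vb)$ in $W^{(q),2}\times L^2$ forces $\bt^k \to \bt^\infty$ in $W^{(q),2}([0,T])$, where $\bt^\infty$ is the extension built the same way from $(\bb,\vb)$; and since $(\A_{2,z_0}\vb)|_{\I^\e} = \bb$, that limiting extension is exactly $\A_{2,z_0}\vb$ on $[0,t_0]$ — and in fact, extending it consistently on all of $[0,T]$, one can take $\bt^\infty = \A_{2,z_0}\vb$ throughout, so $\bt^k \to \A_{2,z_0}\vb$ as required, while $\bt^k = \A_{\infty,z_0}v^k$ on $[0,t_0]$ by construction.

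I expect the main obstacle to be purely bookkeeping: verifying that the one-sided derivatives of $\A_{s,z_0}v$ at $t_0$ truly exist up to order $q_i - 1$ and that the glued function lands in $W^{q_i,s}$ and not merely $W^{q_i-1,s}$ — this is where \eqref{ji}--\eqref{qu} of Lemma~\ref{derivative} and the fact that $t_0$ is \emph{not} a junction time (so $I_t^{\e_0}$ is locally constant around $t_0$, keeping $\A_{s,z_0}v$ smooth there) are essential — together with making the dependence of the Hermite interpolant on the endpoint data explicitly bounded so that the convergence in part b) is automatic rather than requiring a separate estimate.
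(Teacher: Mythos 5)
Your overall strategy (glue $\A_{s,z_0}v$ on $[0,t_0]$ with $\bb$ on $\I^\e$ and fill the unprescribed intervals with $W^{q_i,\infty}$ interpolants matching $q_i$ one-sided derivatives at the endpoints) is the same as the paper's, but your geometric premise is false, and this breaks the construction as stated. It is not true that every $\I_i^\e$ stays away from $t_0$: the point $t_0=\tau\pm\e_0$ is at distance $\e_0>\e$ only from the contact set of the constraint whose junction at $\tau$ generates $t_0$, while a \emph{different} constraint $g_i$ may have a long boundary arc running through $\tau\pm\e_0$, so that $t_0$ lies in the \emph{interior} of a component $[t_1,t_2]$ of $\I_i^\e$; then there is no half-interval near $t_0$ free of active constraints and no positive-length gap $(t_0,\sigma_i)$ on which to put your Hermite interpolant. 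The paper treats this as a separate case ($i\in I^\e_{t_0}$): the hypothesis forces $\A_{s,z_0}v=\bb_i$ on $[t_1,t_0]$, so one simply continues with $\bb_i$ on $(t_0,t_2]$ and no connection at $t_0$ is needed. Your argument can be repaired along these lines, but as written the case is not covered. A further (minor) omission: since $\bb_i\in W^{q_i,s}(\I_i^\e)$ carries no smoothness across the gaps between the finitely many components of $\I_i^\e$, connections are also needed between \emph{successive} components to the right of $t_0$ and on the final piece up to $T$, not only on the first gap.

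The more serious gap is in part b). The statement requires $\bt^k\to\A_{2,z_0}\vb$ in $W^{(q),2}([0,T])$, i.e.\ convergence to $g'(\yb)z[\vb]$ \emph{also on the filler intervals and on the tail}. If, as you propose, the filler is a fixed (Hermite-type) interpolant depending only on the finitely many endpoint derivative values, then on each filler interval $\bt^k$ converges to the interpolant built from the limiting endpoint data -- a polynomial which in general differs from $g_i'(\yb)z[\vb]$ there. Your closing claim that ``one can take $\bt^\infty=\A_{2,z_0}\vb$ throughout'' is inconsistent with the extension being a fixed affine map of the endpoint data, because $\A_{2,z_0}\vb$ on a gap is not determined by those data; this is precisely the nontrivial point to prove. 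Nor can you take $g_i'(\yb)z[\vb]$ itself as the filler, since it is only of class $W^{q_i,2}$ while $\bt^k$ must lie in $W^{q_i,\infty}$. The paper resolves this with lemma~\ref{rempl}: on each connection interval one takes a $W^{q_i,\infty}$ approximation of $g_i'(\yb)z[\vb]$ (obtained by truncating its $q_i$-th derivative) plus a finite-dimensional correction, depending boundedly on the endpoint data, which restores exactly the prescribed (converging) boundary derivatives; these connections then converge to $g_i'(\yb)z[\vb]$ in $W^{q_i,2}$, which is the ingredient missing from your part b).
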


The second lemma relies on \textbf{(A3)}.

\begin{resol} \label{resol} 
Let $s \in [1, \infty]$ and $z_0 \in \R^n$.
 Let $l$ be such that $I_l \ne \emptyset$. For $t \in J_l$, we denote
(recall that $\D$ is the differentiation w.r.t. $(\yt,u,y)$)
\begin{equation}
\left\{ \begin{array}{l}
 M_t := D_{\tilde u}G^{(q)}_{I_l}(t,\bar u_t,\bar y_t,\bar u,\bar y) \in \R^{|I_l|} \times \R^{m*},\\
 N_t := \D G^{(q)}_{I_l}(t,\bar u_t,\bar y_t,\bar u,\bar y) \in \R^{|I_l|} \times \R^{n*}\times \U^* \times\Y^* .
\end{array} \right.
\end{equation}
\begin{enumerate}[a)]
 \item  Let $(\hb,v) \in L^s(J_l;\R^{|I_l|}) \times \V_s$. Then there exists $\vt \in \V_s$ such that
\begin{equation} \label{efb}
 \left\{ \ba{l}
\vt  = v \  \text{on } J_0 \cup \cdots \cup J_{l-1},\\
M_t \vt_t + N_t \left( z[\vt,z_0]_t,\vt,z[\vt,z_0]\right) = \hb_t \ \text{for a.a. } t \in J_l .
\ea \right.
\end{equation}
\item Let $(\hb,\vb) \in L^s(J_l;\R^{|I_l|}) \times \V_s$ be such that
\begin{equation}
M_t \vb_t + N_t \left( z[\vb,z_0]_t,\vb,z[\vb,z_0]\right) = \hb_t \ \text{for a.a. } t \in J_l .
\end{equation}
Let $(h^k,v^k) \in L^\infty(J_l; \R^{|I_l|}) \times \U$, $k \in \NN$, be such that
$(h^k,v^k)\xrightarrow[]{L^s\times L^s} (\hb,\vb)$.
Then there exists $\vt^k \in \U$, $k \in \NN$, such that
$\vt ^k \xrightarrow[]{L^s} \vb$ and
\begin{equation} \label{hfb}
 \left\{ \ba{l}
\vt^k = v^k  \  \text{on } J_0 \cup \cdots \cup J_{l-1},\\
M_t \vt^k_t + N_t \left( z[\vt^k,z_0]_t,\vt^k,z[\vt^k,z_0]\right) = h^k_t \ \text{for a.a. } t \in J_l .
\ea \right.
\end{equation}
\end{enumerate}
\end{resol}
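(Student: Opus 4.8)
\emph{Approach.} The plan is to treat both parts by recasting the pointwise identity on $J_l$ as a linear Volterra fixed-point equation for $\vt$, built on two observations. First, since the active set equals $I_l$ on $J_l$ (see~\eqref{decompoT}), one has $M_t=M^{\e_0}_t$ there, so \textbf{(A3)} gives $|(M_t)^T\xi|\ge\gamma|\xi|$ for $\xi\in\R^{|I_l|}$; hence $M_tM_t^T$ is symmetric with $M_tM_t^T\succeq\gamma^2I$, and $M_t^+:=M_t^T(M_tM_t^T)^{-1}$ is a right inverse of $M_t$ (that is, $M_tM_t^+=I_{|I_l|}$), measurable in $t$, with $|M_t^+|\le1/\gamma$; measurability and boundedness of $M_t$ itself follow from \textbf{(A0)}, $\bar u\in\U$, $\bar y\in\Y$ and the explicit form~\eqref{s} of $g_i^{(q_i)}\in\SS$. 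Second, by lemma~\ref{interest} applied to $h=g_i^{(q_i)}$ together with~\eqref{ll1}, the term $N_t(z[\vt,z_0]_t,\vt,z[\vt,z_0])$ is bounded by $C(|z_0|+\int_0^t|\vt_s|\,\ud s)$; moreover $N_t$ is linear in its direction and $v\mapsto z[v,z_0]$ is affine, so its dependence on $\vt$ is \emph{causal}: if $\vt',\vt''$ coincide on $[0,\inf J_l]$, the difference of the corresponding $N_t$ terms is bounded by $C\int_{\inf J_l}^t|\vt'_s-\vt''_s|\,\ud s$.

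\emph{Part a).} Set $\vt:=v$ on $[0,T]\setminus J_l$ and, on $J_l$, look for a fixed point of $\Psi(v')_t:=M_t^+\big(\hb_t-N_t(z[\vt,z_0]_t,\vt,z[\vt,z_0])\big)$, where $\vt$ denotes the function equal to $v$ off $J_l$ and to $v'$ on $J_l$. The two estimates above show $\Psi$ maps $L^s(J_l;\R^m)$ into itself (the $N_t$ term is in fact bounded on $J_l$, so $\Psi(v')\in L^s$ since $\hb\in L^s$ and $J_l$ is bounded) and that $|\Psi(v')_t-\Psi(v'')_t|\le(C/\gamma)\int_{\inf J_l}^t|v'_s-v''_s|\,\ud s$; a standard weighted-norm argument (or iteration of $\Psi$) then produces a fixed point $\bar v'\in L^s(J_l)$. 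For the resulting $\vt$ one has $M_t\vt_t=M_tM_t^+(\hb_t-N_t)=\hb_t-N_t$ on $J_l$, which is the second line of~\eqref{efb}; the first line holds by construction, and $\vt\in\V_s$ as a gluing of $\V_s$ pieces. For $s=\infty$ the fixed point lies in $\U$ automatically.

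\emph{Part b).} The construction of a) with $s=\infty$ applies verbatim to $(h^k,v^k)\in L^\infty(J_l)\times\U$, but one must choose the free component of the solution so that the $\vt^k$ converge to $\vb$, since the fixed point of $\Psi$ only determines $\Pi_t\vt_t$ (with $\Pi_t:=M_t^+M_t$ the orthogonal projection onto $\range M_t^T$), while $\vb_t$ may carry a merely $L^s$ component $(I-\Pi_t)\vb_t$. So pick $\vb^k\in\U$ with $\vb^k\to\vb$ in $\V_s$ (density of $L^\infty$ in $L^s$; $\vb^k=\vb$ if $s=\infty$), impose $(I-\Pi_t)\vt^k_t=(I-\Pi_t)\vb^k_t$ on $J_l$, and put $\vt^k:=v^k$ on $J_0\cup\cdots\cup J_{l-1}$ and $\vt^k:=\vb^k$ after $J_l$. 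This extra bounded term does not affect the contraction, so $\vt^k\in\U$ exists and satisfies~\eqref{hfb}. Writing $e^k:=\vt^k-\vb$ and subtracting on $J_l$ the equations satisfied by $\vt^k$ and by $\vb$, linearity of $N_t$ and of $v\mapsto z[v,z_0]$ gives
\begin{equation*}
 e^k_t=(I-\Pi_t)(\vb^k_t-\vb_t)+M_t^+(h^k_t-\hb_t)-M_t^+N_t\big(z[e^k,0]_t,e^k,z[e^k,0]\big),\qquad t\in J_l,
\end{equation*}
while $e^k=v^k-\vb$ on $[0,\inf J_l]$ and $e^k=\vb^k-\vb$ afterwards; a Gronwall/Volterra estimate in $L^s(J_l)$ then bounds $\|e^k\|_{L^s(J_l)}$ by a constant times $\|\vb^k-\vb\|_{\V_s}+\|h^k-\hb\|_{L^s(J_l)}+\|v^k-\vb\|_{L^s([0,\inf J_l])}$, which tends to $0$; hence $\vt^k\to\vb$ in $L^s([0,T])$.

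\emph{Main obstacle.} The delicate point is this kernel bookkeeping in b): one must keep $\vt^k$ bounded (to land in $\U$) while steering it toward a limit $\vb$ that need not be bounded, which works only because the equation pins down the $\range M_t^T$-component and leaves $\ker M_t$ free to absorb the unbounded part of $\vb$. The uniform lower bound in \textbf{(A3)} — providing a measurable, uniformly bounded right inverse $M_t^+$ on all of $J_l$ at once — is exactly what makes both the contraction of a) and the stability estimate of b) go through with no continuity assumption on $\bar u$.
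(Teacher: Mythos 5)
Your proposal is correct and follows essentially the same route as the paper: the uniform right inverse $M_t^T(M_tM_t^T)^{-1}$ supplied by \textbf{(A3)} (the paper's lemma~\ref{invers}), a causal Volterra fixed-point/contraction on $J_l$ with the kernel component of the solution left free, and in part~b) a bounded approximation of that kernel component of $\vb$ so that $\vt^k$ stays in $\U$ while converging to $\vb$. The only differences are cosmetic: the paper prescribes the kernel part as the truncation of $\wb=(I-M^T(MM^T)^{-1}M)\vb$ and gets $\vt^k\to\vb$ from a contraction-with-parameter theorem, whereas you prescribe it as $(I-\Pi_t)\vb^k_t$ and prove convergence by an explicit Gronwall estimate.
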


\paragraph{}
\begin{proof}[Proof of lemma~\ref{bigresult}]
In the sequel we omit $z_0$ in the notations.

\bigskip
\noindent a)
Let $\bb \in W^{(q),s}(\I^\e)$.
We need to find $v \in \V_s$ such that
\begin{equation} \label{but1}
  g_i'(\yb)z[v] =  \bb_i \ \text{on } \I_i^\e , \ i=1,\ldots,r .
\end{equation}
Since
\begin{equation*}
 v=v' \ \text{on } [0,t] \Longrightarrow z[v]=z[v'] \ \text{on } [0,t] ,
\end{equation*}
let us construct $v^0,\ldots,v^\kappa \in \V_s$ such that, for all $l$,
\begin{equation*} 
 \left\{ \ba{l}
v^l = v^{l-1} \ \text{on } J_0 \cup \cdots \cup J_{l-1}, \\
g_i'(\yb)z[v^l] =  \bb_i \ \text{on } \I_i^\e \cap J_l , \ i=1,\ldots,r 
\ea \right.
\end{equation*}
and $v := v^\kappa$ will satisfy \eqref{but1}.

By \textbf{(A4)}, $J_0 = [0,\tau_1 - \e_0)$ where $\tau_1$ is the first junction time
and $ \I_i^\e \cap J_0 = \emptyset$ for all $i$.
Then we can choose $v^0 :=0$.

Suppose we have $v^0,\ldots v^{l-1}$ for some $l \ge 1$ and let us construct $v^l$.
Applying lemma~\ref{raccords}~a) to $(\bb,v^{l-1})$ with $\{t_0\} = \overline{J_{l-1}} \cap \overline{J_l}$, we get $\bt \in W^{(q),s}([0,T])$.
Since $\I_i^\e \cap J_l = \emptyset$ if $i \not \in I_l$, it is now enough to find $v^l$ such that
\begin{equation} \label{but4}
 \left\{ \ba{l}
v^l = v^{l-1} \ \text{on } J_0 \cup \cdots \cup J_{l-1}, \\
g_i'(\yb)z[v^l] = \bt_i\ \text{on } J_l , \ i \in I_l.
\ea \right.
\end{equation}
Suppose that $v^l = v^{l-1}$ on $J_0 \cup \cdots \cup J_{l-1}$.
Then $g_i'(\yb)z[v^l]= \bt_i$ on $J_{l-1}$,
and it follows
that
\begin{gather}
  g_i'(\yb)z[v^l] =  \bt_i \ \text{on } J_l \\
\Updownarrow\nonumber \\
 \displaystyle \frac{d^{q_i}}{dt^{q_i}} g_i'(\yb)z[v^l] =\frac{d^{q_i}}{dt^{q_i}} \bt_i \ \text{on }  J_l .\label{enfin}
\end{gather}
And by lemma~\ref{derivative}, \eqref{enfin} is equivalent to
\begin{equation}
 D_{\ut}g_i^{(q_i)}(t,\ub_t,\yb_t,\ub,\yb)v^l_t
+ \D g_i^{(q_i)}(t,\ub_t,\yb_t,\ub,\yb)(z[v^l]_t,v^l,z[v^l]) = \bt_i^{(q_i)}(t)
\end{equation}
for a.a. $t \in J_l$.

If $I_l = \emptyset$, we choose $v^l:=v^{l-1}$.
Otherwise, say $I_l=\{ i_1 < \cdots < i_p \}$ and define on $J_l$
\begin{equation*} 
\hb:=
\begin{pmatrix}
 \bt_{i_1}^{(q_{i_1})} \\
\vdots\\
 \bt_{i_p}^{(q_{i_p})}
\end{pmatrix}
\in L^s(J_l;\R^{|I_l|}) .
\end{equation*}
Then \eqref{but4} is equivalent to
\begin{equation} \label{but5}
 \left\{ \ba{l}
v^l = v^{l-1} \ \text{on } J_0 \cup \cdots \cup J_{l-1}, \\
M_t v^l_t + N_t (z[v^l]_t,v^l,z[v^l]) =\hb_t \ \text {for a.a. } t \in J_l.
\ea \right.
\end{equation}
Applying lemma~\ref{resol}~a) to $(h,v^{l-1})$, we get $\vt$ such that \eqref{but5} holds; we choose $v^l:= \vt$.

\bigskip
\noindent b)
We follow a similar scheme to the one of the proof of a).

Let $(\bb,\vb) \in W^{(q),2}(\I^\e)\times \V_2$ be such that
\begin{equation*}
 g_i'(\yb)z[\vb] =  \bb_i \ \text{on } \I^\e, \ i=1,\ldots,r.
\end{equation*}
Let $b^k \in W^{(q),\infty}(\I^\e)$, $k\in \NN$, be such that $b^k \xrightarrow[]{W^{(q),2}} \bb$.
Let us construct $v^{k,0},\ldots,v^{k,\kappa} \in \U$, $k \in \NN$, such that for all $l$,
$v^{k,l} \xrightarrow[k \to \infty]{L^2} \vb $ and
\begin{equation*}
 \left\{ \ba{l}
v^{k,l} = v^{k,l-1} \ \text{on } J_0 \cup \cdots \cup J_{l-1},  \\
g_i'(\yb)z[v^{k,l}] =  b_i^k \ \text{on } \I_i^\e \cap J_l , \ i \in I_l .
\ea \right.
\end{equation*}
We will conclude the proof by defining $v^k := v^{k,\kappa}$, $k \in \NN$.

We choose for $v^{k,0}$ the truncation of $\vb$, $k \in \NN$ (see definition~\ref{truncation} in appendix~\ref{ass3}).

Suppose we have $v^{k,0},\ldots,v^{k,l-1}$, $k \in \NN$, for some $l \ge 1$ and let us construct $v^{k,l}$, $k \in \NN$.
Applying lemma~\ref{raccords}~b) to $(b^k,v^{k,l-1})$ with $\{t_0 \} = \overline{J_{l-1}} \cap \overline{J_l}$,
we get $\bt^k \in W^{(q),\infty}([0,T])$, $k \in \NN$. In particular,
\begin{equation} \label{limit1}
 \bt^k \xrightarrow[]{W^{(q),2}} \bt
\end{equation}
where $ \bt : = g'(\yb)z[\vb] \in W^{(q),2}([0,T])$.
And it is now enough to find $v^{k,l}$, $k \in \NN$, such that
$v^{k,l} \xrightarrow[k \to \infty]{L^2} \vb $ and
\begin{equation} \label{goal}
  \left\{ \ba{l}
v^{k,l} = v^{k,l-1} \ \text{on } J_0 \cup \cdots \cup J_{l-1},  \\
g_i'(\yb)z[v^{k,l}] =  \bt_i^k \ \text{on }  J_l , \ i \in I_l.
\ea \right.
\end{equation}

If $I_l = \emptyset$, we choose $v^{k,l}=v^{k,l-1}$, $k \in \NN$.
Otherwise,  say $I_l=\{ i_1 < \cdots < i_p \}$ and define on $J_l$
\begin{equation*} 
\hb :=
\begin{pmatrix}
 \bt_{i_1}^{(q_{i_1})} \\
\vdots\\
 \bt_{i_p}^{(q_{i_p})}
\end{pmatrix}
\in L^2(J_l;\R^{|I_l|}), \quad 
h^k :=
\begin{pmatrix}
 (\bt_{i_1}^k)^{(q_{i_1})} \\
\vdots\\
 (\bt_{i_p}^k)^{(q_{i_p})}
\end{pmatrix}
\in L^\infty(J_l;\R^{|I_l|}).
\end{equation*}
We have
\begin{equation*} 
 M_t \vb_t + N_t(z[\vb]_t,\vb,z[\vb]) = \hb_t  \ \text{for a.a } t \in J_l 
\end{equation*}
and \eqref{goal} is equivalent to
\begin{equation} \label{goal2}
 \left\{ \ba{l}
v^{k,l} = v^{k,l-1} \ \text{on } J_0 \cup \cdots \cup J_{l-1},  \\
M_t v^{k,l}_t + N_t (z[v^{k,l}]_t,v^{k,l},z[v^{k,l}]) = h^k_t \ \text {for a.a. } t \in J_l.
\ea \right.
\end{equation}
By \eqref{limit1}, $h^k \xrightarrow[]{L^2} \hb$, and by assumption, $v^{k,l-1} \xrightarrow[k \to \infty]{L^2} \vb $.
Applying lemma~\ref{resol}~b) to $(h^k,v^{k,l-1})$, we get $\vt^k$, $k \in \NN$, such that 
$\vt^k \xrightarrow[]{L^2} \vb $ and \eqref{goal2} holds; we choose $v^{k,l} =\vt^k$, $k \in \NN$.

\end{proof}

\subsection{Necessary conditions} \label{noc}
Recall that we are under the assumptions \textbf{(A0)}-\textbf{(A4)}.

\subsubsection{Structure of the set of Lagrange multipliers}
Recall that we denote by $\Lambda$ the set of Lagrange multipliers associated with $(\bar u,\bar y)$ (definition~\ref{lagrangemult}).
We consider the projection map 
\begin{equation*}
   \begin{array}{rccc}
\pi :& \M \times \R^{s*} \times \cP  & \longrightarrow & \R^{N*} \times \R^{s*} \\
 &(\ud \eta ,\Psi,p) & \longmapsto & \left( \left( [\eta_{i,\tau}]\right)_{\tau,i} , \Psi \right)
 \end{array}
\end{equation*}
where $\tau \in \mathcal T_i$, $i=r_1+1, \ldots,r$.
%  and we define 
% \begin{equation} 
% \Lambda_\pi := \pi \left( \Lambda \right) . 
% \end{equation}
A consequence of lemma~\ref{bigresult}~a) is the following:

\begin{inj} \label{inj}
 $\pi|_{\Lambda}$ is injective.
\end{inj}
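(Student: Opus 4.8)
The plan is to show that if two Lagrange multipliers $\lambda = (\ud \eta, \Psi, p)$ and $\lambda' = (\ud \eta', \Psi', p')$ in $\Lambda$ have the same image under $\pi$, then they coincide. So suppose $\pi(\lambda) = \pi(\lambda')$, i.e. $[\eta_{i,\tau}] = [\eta'_{i,\tau}]$ for all $\tau \in \mathcal T_i$, $r_1 < i \le r$, and $\Psi = \Psi'$. I want to conclude $\ud \eta = \ud \eta'$ and $p = p'$. Since by Lemma~\ref{existadj} the adjoint state is uniquely determined by the data $(\ub, \yb, \ud\eta, \Psi)$ through \eqref{adjfinal}, once I prove $\ud\eta = \ud\eta'$ (and we already have $\Psi = \Psi'$), the equality $p = p'$ follows immediately. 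So the real content is to prove $\ud\eta = \ud\eta'$.

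Set $\ud \delta := \ud \eta - \ud \eta' \in \M$. From $\pi(\lambda) = \pi(\lambda')$ we know $\ud\delta_i(\{\tau\}) = 0$ for every touch point $\tau \in \mathcal T_i$, $i > r_1$; combined with the support conditions \eqref{support} (valid for any Lagrange multiplier via \eqref{multeta}), this means $\supp(\ud\delta_i) \subset \I_i$ for all $i$, where $\I_i$ is the contact set (from which touch points have been removed when $i > r_1$). Let $p_\delta$ be the adjoint state solving \eqref{adjfinal} with data $(\ub, \yb, \ud\delta, 0)$; by linearity of \eqref{adjfinal} in $(\ud\eta, \Psi)$, $p_\delta = p - p'$. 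Now I use the stationarity conditions: both $\lambda$ and $\lambda'$ satisfy \eqref{multadj}, \eqref{multstation}, hence subtracting, for the direction $(v, z_0) \in \U \times \R^n$ with $z = z[v, z_0]$, the computation in the proof of Lemma~\ref{coupe} (or Lemma~\ref{mult1}) gives
\begin{equation*}
 0 = \int_0^T D_u H[p_\delta](t, \ub_t, \yb_t) v_t \, \ud t + \big(p_{\delta, 0_-} + D_{y_1}\Phi[0](\yb_0,\yb_T)\big) z_0,
\end{equation*}
but more usefully, going back one step before choosing $p$ as the solution of the adjoint equation, the full bilinear identity reads
\begin{equation*}
 \int_{[0,T]} g'(\yb_t) z_t \, \ud\delta_t = 0 \quad \text{for all } (v,z_0) \in \U \times \R^n,\ z = z[v,z_0],
\end{equation*}
after accounting that the $\Psi$-parts cancel and $D_{(u,y_0)}L$ vanishes for both multipliers; i.e. $\langle \ud\delta, g'(\yb) z[v,z_0]\rangle = 0$ for all $(v, z_0)$. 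Equivalently, writing $\ud\delta = (\ud\delta_i)_i$ with $\supp(\ud\delta_i) \subset \I_i$, this says $\sum_i \int_{\I_i} g_i'(\yb_t) z[v,z_0]_t \, \ud\delta_{i,t} = 0$ for all $v \in \U$, $z_0 \in \R^n$ (taking $z_0 = 0$ is enough).

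The final step invokes Lemma~\ref{bigresult}~a): for any target $\bb = (\bb_i)_i \in W^{(q),\infty}(\I^\e)$ there is $v \in \V_\infty = \U$ with $g_i'(\yb) z[v, 0] = \bb_i$ on $\I_i^\e \supset \I_i$ for each $i$. In particular, restricting to $\I_i$, the functions $\big(g_i'(\yb) z[v,0]|_{\I_i}\big)_i$ range over a set that is dense in $\prod_i C(\I_i; \R)$ — indeed already $W^{(q),\infty}(\I_i^\e)|_{\I_i}$ restricts onto a dense subspace of $C(\I_i)$ (it contains, e.g., restrictions of polynomials, or one argues via Stone--Weierstrass / the fact that $C^\infty$ is dense in $C$). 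Since $\ud\delta_i$ is a finite Radon measure on $[0,T]$ supported in $\I_i$, the vanishing of $\sum_i \int_{\I_i} \varphi_i \, \ud\delta_{i,t}$ on this dense set forces $\ud\delta_i = 0$ for every $i$, hence $\ud\eta = \ud\eta'$, and then $p = p'$ as noted. I expect the main obstacle to be the careful bookkeeping: deriving cleanly the identity $\langle \ud\delta, g'(\yb)z[v,z_0]\rangle = 0$ from the two stationarity conditions (handling the measure decomposition \eqref{eta_def}--\eqref{rhonu} and the touch-point terms correctly via \eqref{support}), and then rigorously passing from "surjectivity onto $W^{(q),\infty}$" in Lemma~\ref{bigresult}~a) to the density in $C(\I_i)$ needed to annihilate the measure — the surjectivity is onto a Sobolev space on $\I_i^\e$, and one must check its restriction to the closed set $\I_i$ separates points / is dense in the sup norm.
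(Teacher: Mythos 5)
Your proposal is correct and follows essentially the same route as the paper: you cancel the finite-dimensional data $(\nu,\Psi)$ via $\pi$, subtract the two stationarity conditions to see that the difference measure annihilates $g'(\yb)z[v,z_0]$ for all directions, invoke the surjectivity of lemma~\ref{bigresult}~a) together with density of smooth functions in the continuous functions to conclude the measures coincide, and finish with the uniqueness of the adjoint state from lemma~\ref{existadj}. The paper packages the same argument slightly differently (working with the reduced Lagrangian $L_R$, the functional $F_\rho$ and the injectivity of $DG_1(\ub,\yb_0)^*$ on $\left(W^{(q),\infty}(\I^\e)\right)^*$, with density taken in $\prod_i C(\I_i^\e)$ rather than after localizing the support to $\I_i$), but this is only a cosmetic difference.
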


\begin{proof}
%  \textit{(i)}
% Recall that $\Lambda_R$ is the set of Lagrange multipliers of ($P_R$) at $(\bar u,\bar y_0)$.
% By lemma~\ref{firstreduced}, $\Lambda_R$ is nonempty, convex and weakly$*$ compact.
% By lemma~\ref{mult1}, $\Lambda_\pi$ is also the projection of $\Lambda_R$ on $\R^{N*} \times \R^{s*}$.
% Then $\Lambda_\pi$ is nonempty, convex and compact (since it lies in a vector space of finite dimension).
% 
% \textit{(ii)}
We will use the fact that one of the constraint, namely $G_1$, has a surjective derivative.
For $ \ud \rho \in \prod_{i=1}^r \M\left( \I_i^{\e}\right)$, we define $F_\rho \in \left( W^{(q),\infty}(\I^\e) \right)^*$ by
\begin{equation*}
  F_\rho(\p):= \sum_{1\le i\le r} \int_{\I_i^{\e}} \p_{i,t} \ud \rho_{i,t}  \quad \text{for all } \p \in  W^{(q),\infty}(\I^\e).
\end{equation*}
Since by lemma~\ref{derivative},
$DG_1(\bar u,\bar y_0)(v,z_0) \in  W^{(q),\infty}(\I^\e)$ for all $(v,z_0) \in \U \times \R^n$, we have
\begin{align*}
  \left\langle \ud \rho, DG_1(\bar u,\bar y_0)(v,z_0)  \right\rangle &=  \left\langle F_\rho, DG_1(\bar u,\bar y_0)(v,z_0)  \right\rangle  \\
  & = \left\langle \left(DG_1(\bar u,\bar y_0)\right)^*F_\rho, (v,z_0)  \right\rangle .
\end{align*}
Then differentiating $L_R$, defined by \eqref{lagclass}, w.r.t. $(u, y_0)$ we get
\begin{multline} \label{station}
D_{(u,y_0)} L_R(\bar u,\bar y_0,\ud \rho,\nu,\Psi) \\
=  DJ(\bar u,\bar y_0) + DG_1(\bar u,\bar y_0) ^* F_\rho + DG_2(\bar u,\bar y_0) ^* \nu + DG_3(\bar u,\bar y_0) ^* \Psi .
\end{multline}
Let $(\ud \eta, \Psi, p), (\ud \eta', \Psi', p') \in \Lambda$ and suppose that
$\pi\left((\ud \eta, \Psi, p)\right) = \pi \left((\ud \eta', \Psi', p')\right)$.
By lemma~\ref{mult1}, let $(\ud \rho,\nu), (\ud \rho',\nu')$ be such that $(\ud \rho,\nu,\Psi),(\ud \rho',\nu',\Psi') \in \Lambda_R$.
Then $(\nu,\Psi)= (\nu',\Psi')$, and by definition of $\Lambda_R$,
\begin{equation*}
 D_{(u,y_0)} L_R(\bar u,\bar y_0,\ud \rho,\nu,\Psi) =D_{(u,y_0)} L_R(\bar u,\bar y_0,\ud \rho',\nu,\Psi) =0 .
\end{equation*}
Then by \eqref{station}, $ DG_1(\bar u,\bar y_0)^* F_\rho =  DG_1(\bar u,\bar y_0) ^* F_{\rho'}$.
And it is a consequence of lemma~\ref{bigresult}~a) that 
$DG_1(\bar u,\bar y_0)^*$ is injective on $\left( W^{(q),\infty}(\I^\e) \right)^*$.
Then $F_\rho =F_{\rho'}$, and by density of $W^{(q),\infty}(\I^\e) $ in $\prod C \left( \I_i^\e \right)$, we get $\ud \rho= \ud \rho'$.
Together with $\nu=\nu'$, it implies $\ud \eta = \ud \eta'$ and then $(\ud \eta, \Psi, p)= (\ud \eta', \Psi', p')$.
\end{proof}

\paragraph{}
As a corollary, we get a refinement of theorem~\ref{firsteasy}:
\begin{first} \label{first} \hspace{0.cm}
Let $(\bar u,\bar y)$ be a qualified local solution of ($P$). Then $\Lambda$ is nonempty, convex, of finite dimension and compact.
\end{first}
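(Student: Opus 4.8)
The bulk of the statement is already in hand: by theorem~\ref{firsteasy} applied to $(P)$ --- equivalently, to the reduced problem via remark~\ref{link} and lemma~\ref{mult1} --- the set $\Lambda$ is nonempty, convex, bounded and weakly~$*$ compact. So the only genuinely new points to establish are that $\Lambda$ has finite dimension and that, as a consequence, its weak~$*$ compactness can be promoted to plain compactness. The whole argument will rest on lemma~\ref{inj}, which asserts that the linear map $\pi\colon \M\times\R^{s*}\times\cP\to\R^{N*}\times\R^{s*}$ is injective on $\Lambda$; observe that the target space here is finite-dimensional, of dimension $N+s$.

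First I would record the elementary linear-algebra fact that a linear map injective on a convex set is injective on the affine hull of that set. Concretely, fix $\lambda_0\in\Lambda$ and pass to $\Lambda-\lambda_0$, so that we may assume $0\in\Lambda$ and hence $\affhull(\Lambda)=\mathrm{span}(\Lambda)$. Given $x\in\mathrm{span}(\Lambda)$, write $x=\sum_{i\in P}c_i\lambda_i-\sum_{i\in Q}|c_i|\lambda_i$ with $\lambda_i\in\Lambda$ and $c_i\in\R$; for $t>0$ small enough, both $\alpha:=t\sum_{i\in P}c_i\lambda_i$ and $\beta:=t\sum_{i\in Q}|c_i|\lambda_i$ are convex combinations of the $\lambda_i$ and of $0$, hence lie in $\Lambda$. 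Since $\pi(\alpha)-\pi(\beta)=t\,\pi(x)$, injectivity of $\pi$ on $\Lambda$ forces $\pi(x)=0\Rightarrow\alpha=\beta\Rightarrow x=0$, so $\pi$ is injective on $\mathrm{span}(\Lambda)$. Therefore $\pi$ maps $\affhull(\Lambda)$ linearly and injectively into $\R^{N*}\times\R^{s*}$, which gives $\dim\affhull(\Lambda)\le N+s<\infty$, i.e.\ $\Lambda$ has finite dimension.

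It then remains to upgrade compactness. The affine subspace $\affhull(\Lambda)$ is finite-dimensional, hence closed in the locally convex space $\M\times\R^{s*}\times\cP$, and on it all Hausdorff vector topologies --- in particular the weak~$*$ topology and the norm topology --- coincide. Since $\Lambda$ is weakly~$*$ compact by theorem~\ref{firsteasy} and is contained in $\affhull(\Lambda)$, it is compact. The substantial obstacle has in fact already been cleared in lemma~\ref{inj} (which itself relies on lemma~\ref{bigresult}~a)), so what is left here is light; the only points deserving a little care are the convexity trick used to pass from $\Lambda$ to its affine hull, and the remark that on a finite-dimensional piece weak~$*$ compactness is the same as compactness.
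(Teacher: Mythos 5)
Your proof is correct, and it rests on exactly the same two pillars as the paper's: theorem~\ref{firsteasy} for nonemptiness, convexity and weak~$*$ compactness, and lemma~\ref{inj} for the injectivity of $\pi$ on $\Lambda$. Where you diverge is in how that injectivity is converted into finite dimension and compactness. The paper works on the image side: it sets $\Lambda_\pi:=\pi(\Lambda)$, uses the convexity of $\Lambda$ and $\Lambda_\pi$ to show that the inverse $m$ of $\pi|_\Lambda$ preserves convex combinations, extends $m$ to an affine map on the (finite-dimensional, hence giving continuity) affine span of $\Lambda_\pi$, and then reads off all properties of $\Lambda=m(\Lambda_\pi)$ as the continuous affine image of a compact convex finite-dimensional set. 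You work on the source side: the same convexity trick (translating so that $0\in\Lambda$ and splitting a span element into two convex combinations) shows that $\pi$ is injective on $\mathrm{span}(\Lambda-\lambda_0)$, which bounds $\dim\affhull(\Lambda)$ by $N+s$ directly, and you then upgrade weak~$*$ compactness to norm compactness by invoking the uniqueness of the Hausdorff vector topology on a finite-dimensional subspace. The two arguments are really dual packagings of one idea --- your injectivity of $\pi$ on the affine hull is precisely what makes the paper's affine extension of $m$ well defined --- but yours makes the dimension bound $N+s$ explicit and avoids constructing $m$, at the price of appealing to the topological fact about finite-dimensional subspaces of a TVS, which the paper's push-forward argument never needs. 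Both the translation step (injectivity on $\Lambda$ is unaffected by subtracting a fixed $\lambda_0$, by linearity of $\pi$) and the choice of $t$ small enough that the coefficient sums stay below $1$ are handled correctly, so there is no gap.
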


\begin{proof}
Let $\Lambda_\pi := \pi \left( \Lambda \right)$.
By theorem~\ref{firsteasy}, 
$\Lambda$ is nonempty, convex, weakly~$*$ compact and 
$\Lambda_\pi$ is nonempty, convex, of finite dimension and compact
($\pi$ is linear continuous and its values lie in a finite-dimensional vector space).
By lemma~\ref{inj}, $\pi|_{\Lambda} \colon \Lambda  \rightarrow \Lambda_\pi$ is a bijection.
We claim that its inverse
  \begin{equation*}
   \begin{array}{rccc}
m :&  \Lambda_\pi& \longrightarrow & \Lambda   \\
& \left( ([\eta_{i,\tau}])_{\tau,i},\Psi \right)  & \longmapsto &(\ud \eta, \Psi, p) 
 \end{array}
\end{equation*}
 is the restriction of a continuous affine map.
Since $\Lambda = m\left(\Lambda_\pi \right)$, the result follows.
For the claim, using the convexity of both $\Lambda_ \pi$ and $\Lambda$, the linearity of $\pi$ and its injectivity when restricted to $\Lambda$,
we get that $m$ preserves convex combinations of elements from $\Lambda_\pi$.
Thus we can extend it to an affine map on the affine subspace of $\R^{N*}\times \R^{s*}$ spanned by $\Lambda_\pi$.
Since this subspace is of finite dimension, the extension of $m$ is continuous.
\end{proof}

\subsubsection{Second-order conditions on a large critical cone}

Recall that for $\lambda \in \Lambda$, $\J[\lambda]$ has been defined on $\U \times \R^n$ by \eqref{new} or \eqref{old}.

\begin{exlemma} \label{exlemma}
 $\J$ is quadratic w.r.t. $(v,z_0)$ and  affine w.r.t. $\lambda$.
By lemmas~\ref{estimsobo}, \ref{interest} and~\ref{derivative},
$\J[\lambda]$ can be extended continuously to $\V_2 \times \R^n$ for any $\lambda \in \Lambda$.
We obtain the so-called \emph{Hessian of Lagrangian}
\begin{equation} \label{hessian}
 \J \colon \left[ \Lambda \right] \times \V_2 \times \R^n \longrightarrow \R
\end{equation}
which is jointly continuous w.r.t. $\lambda$ and $(v,z_0)$.
\end{exlemma}

\noindent
The critical $L^2$ cone $C_2$ has been defined by \eqref{critical1}-\eqref{critical3}.
Let the \emph{strict critical $L^2$ cone} be the set
\begin{equation*}
 C_2^S := \left\{ (v,z_0) \in C_2 \, : \, g_i'(\bar y)z = 0 \ \text{on } \I_i, \ i=1,\ldots,r \right\},
\end{equation*}
where $z=z[v,z_0] \in \Z_2$.

% \paragraph{} 
% Suppose that $\Lambda \ne \emptyset$ and let $\bar \lambda=(\bar \eta,\bar \Psi,\bar p) \in  \Lambda$.
% We define the \emph{strict critical $L^2$ cone} $C_2^S$ as the set of $(v,z_0)\in \V_2 \times  \R^n$ such that:
% % \eqref{critical1}, \eqref{critical2}, \eqref{critical3} hold, i.e. such that:
% \begin{align}
%  &&& \left\{ \begin{array}{l}
%              g_i'(\bar y)z = 0
%            \end{array} \right.
% \  \text{on } \I_i, \ i=1,\ldots,r, \label{v2}\\
% (C_2^S)&&&  \left\{ \begin{array}{l}
%             g_i'(\bar y_\tau)z_\tau \le 0, \\
%             \left[\bar \eta_{i,\tau} \right] g_i'(\bar y_\tau)z_\tau= 0,
%            \end{array}\right.
% \ \tau \in \mathcal T_i, \ i=r_1+1, \ldots,r , \label{t2}\\
%  &&& \left\{ \begin{array}{l}
%             D\Phi(\bar y_0,\bar y_T)(z_0,z_T) \in T_K \left(\Phi(\bar y_0,\bar y_T)\right), \\
%             \bar \Psi D\Phi(\bar y_0,\bar y_T)(z_0,z_T)=0,
%            \end{array} \right. \label{if2}
% \end{align}
% where $z=z[v,z_0] \in \Z_2$.
% % Note that \eqref{critical2}-\eqref{critical3} and \eqref{t2}-\eqref{if2} are the same.
% Again, $C^S_2$ does not depend on the choice of $\bar \lambda$.

% \paragraph{}
% \bigskip
\begin{second} \label{second}
Let $(\bar u,\bar y)$ be a qualified local solution of ($P$).
 Then for any $(v,z_0) \in C_2^S$, there exists $\lambda \in \Lambda$ such that
\begin{equation}
 \mathcal J[\lambda](v,z_0) \ge 0 .
\end{equation}
\end{second}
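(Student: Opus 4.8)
The plan is to deduce Theorem~\ref{second} from the second-order necessary conditions on the radial critical cone (Lemma~\ref{absecond}) together with the density result contained in Lemma~\ref{bigresult}~b). The key point is that $C_2^S$ is, by construction, the set of critical directions on which the extra curvature term coming from the touch points is "tame", and this is precisely the regime in which the radial critical cone $C^R_\infty$ is dense; once density is established, the conclusion follows from the joint continuity of $\J$ (Remark~\ref{exlemma}) by passing to the limit in Lemma~\ref{absecond}.

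First I would fix $(v,z_0) \in C_2^S$, set $z = z[v,z_0] \in \Z_2$ and $\bb := g'(\yb)z|_{\I^\e} \in W^{(q),2}(\I^\e)$; note $\bb_i = 0$ on $\I_i$ by the definition of $C_2^S$, while the complementarity and sign conditions \eqref{critical1}--\eqref{critical3} hold. The strategy is to approximate $(v,z_0)$ by a sequence $(v^k,z_0) \in \U \times \R^n$ lying in $C^R_\infty$ with $v^k \to v$ in $L^2$. To do this I would first regularize $\bb$ at the level of the state constraints: choose $b^k \in W^{(q),\infty}(\I^\e)$ with $b^k \xrightarrow{W^{(q),2}(\I^\e)} \bb$ and, crucially, with $b^k_i \le 0$ on $\I_i^\e$ and $b^k_i$ chosen so that, together with $g_i(\yb)$, the radiality inequality $g_i(\yb) + \bar\sigma b^k_i \le 0$ on $\I_i^\e$ holds for some $\bar\sigma > 0$ — this is possible because on the contact set $g_i(\yb) = 0$ and there $\bb_i = 0$, so one truncates $\bb_i$ slightly downward near $\I_i$ and uses that $g_i(\yb) < 0$ with a quantitative gap on $\I_i^\e \setminus \I_i$; near touch points one uses the reducibility (negativity of the second derivative) to control the one-variable maximum functional $\mu_\tau$. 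Then apply Lemma~\ref{bigresult}~b) to obtain $v^k \in \U$ with $v^k \to v$ in $L^2$ and $(\A_{\infty,z_0}v^k)|_{\I^\e} = b^k$, i.e. $g_i'(\yb)z[v^k] = b^k_i$ on $\I_i^\e$.

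Next I would check that $(v^k,z_0)$ indeed belongs to $C^R_\infty$ for $k$ large: the running-constraint conditions \eqref{critical1}--\eqref{critical2} and the radiality inequality hold by the choice of $b^k$; the endpoint conditions \eqref{critical3} must be repaired, since perturbing $v$ perturbs $z_T$. Here I would use the qualification hypothesis — more precisely surjectivity of $(v,z_0)\mapsto D\Phi^E(\yb_0,\yb_T)(z_0,z[v,z_0]_T)$ from Definition~\ref{qualif}(i) and the Mangasarian–Fromovitz-type direction $(\vb,\zb_0)$ of Definition~\ref{qualif}(ii) — to add a small correction to $v^k$ (supported away from the contact sets $\I_i^\e$, using \textbf{(A4)} and the arc structure, or using the freedom in Lemma~\ref{bigresult} which only constrains $v$ on the $\I_i^\e$) restoring $D\Phi(\yb_0,\yb_T)(z_0,z[v^k,z_0]_T) \in T_K(\cdot)$ and the orthogonality to $\bar\Psi$, while keeping $v^k \to v$ in $L^2$; this is the standard argument that the radial critical cone is dense in the (strict) critical cone. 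Finally, for each $k$, Lemma~\ref{absecond} gives $\lambda^k \in \Lambda$ with $\J[\lambda^k](v^k,z_0) \ge 0$; since $\Lambda$ is compact (Theorem~\ref{first}) I extract $\lambda^k \to \lambda \in \Lambda$, and joint continuity of $\J$ (Remark~\ref{exlemma}) together with $v^k \to v$ in $L^2$ yields $\J[\lambda](v,z_0) \ge 0$.

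The main obstacle I anticipate is the approximation step that produces $b^k$ and simultaneously guarantees (a) $W^{(q),2}$-convergence to $\bb$, (b) the correct sign/complementarity on each $\I_i^\e$, and (c) strict radial feasibility $g_i(\yb)+\bar\sigma b^k_i \le 0$ on $\I_i^\e$ with a uniform $\bar\sigma$ — reconciling these near junction times, and in particular near reducible touch points where the behaviour of $g_i(\yb)$ is governed by a strictly negative second derivative, is the delicate part and is exactly where assumptions \textbf{(A1)}--\textbf{(A4)} and Lemma~\ref{muep} are used. The endpoint-correction step is routine once the qualification conditions are invoked, and the limiting argument is immediate from the compactness and continuity already established.
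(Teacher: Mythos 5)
Your top-level limiting argument is exactly the paper's: approximate the critical direction by radial critical directions, apply lemma~\ref{absecond} to each approximant, use compactness of $\Lambda$ (theorem~\ref{first}) and joint continuity of $\J$ (remark~\ref{exlemma}) to pass to the limit. The gap is in the density step, which is the real content here. First, your claim that conditions \eqref{critical1}--\eqref{critical2} ``hold by the choice of $b^k$'' is not correct: the touch-point conditions \eqref{critical2} concern $g_i'(\yb_\tau)z[v^k]_\tau$ at points $\tau\in\T_i$ which lie \emph{outside} $\I_i^\e$ (touch points are removed from $\I_i$, and by remark~\ref{assump} the junction times are more than $2\e_0>\e$ apart), so lemma~\ref{bigresult}~b) gives you no control there and these values move when $v$ is perturbed; likewise, truncating $\bb_i$ ``slightly downward near $\I_i$'' destroys the complementarity $g_i'(\yb)z=0$ on $\supp(\ud\bar\eta_i)\cap\I_i$ required for membership in $C_\infty$, which is a prerequisite for $C^R_\infty$ and hence for invoking lemma~\ref{absecond}. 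The correct choice is $b^k_i=0$ on a whole neighbourhood $\I_i^{\delta_k}$ of $\I_i$ (this is lemma~\ref{density}, built from lemma~\ref{rempl2}); then complementarity is preserved and radiality is automatic, since $g_i(\yb)$ is bounded away from $0$ on $\I_i^\e\setminus\I_i^{\delta}$, with no need for sign conditions on $b^k$ or for reducibility of touch points.

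Second, your ``repair'' of \eqref{critical3} (and, necessarily, of \eqref{critical2}) via the qualification conditions is not routine as stated: a correction ``supported away from the contact sets'' does \emph{not} leave $g'(\yb)z[\cdot]$ unchanged on $\I^\e$, because the linearized state equation \eqref{lindyn} has memory, so any earlier modification of the control propagates into the contact neighbourhoods. To make your route work you would need a correction lying in the kernel of $v\mapsto\big(g'(\yb)z[v,z_0]\big)|_{\I^\e}$ that is simultaneously surjective onto the finitely many scalar quantities in \eqref{critical2}--\eqref{critical3}, a nontrivial simultaneous-surjectivity argument you have not supplied. The paper avoids this entirely: it observes that $C^{R+}_\infty$ and $C_2^S$ are the \emph{same} polyhedral cone, cut out by the finitely many linear conditions \eqref{critical2}--\eqref{critical3}, inside the two spaces $X_\infty^+\subset X_2$, and invokes Lemma~1 of \cite{MR2472878} to reduce the density of the cones to the density of the spaces, which is then exactly lemma~\ref{bigresult}~b) combined with lemma~\ref{density}; notably this density step (lemma~\ref{polyhedricity}) uses no qualification hypothesis at all.
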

% \bigskip
The proof is based on the following density lemma, announced in the introduction and proved in the next section:
 \begin{polyhedricity} \label{polyhedricity}
 $C^R_\infty \cap C^S_2$ is dense in $C^S_2$ for the $L^2 \times \R^n$ norm.
\end{polyhedricity}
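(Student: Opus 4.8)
The plan is to show that every $(v,z_0) \in C_2^S$ can be approximated in the $L^2 \times \R^n$ norm by elements of $C_\infty^R \cap C_2^S$, exploiting the surjectivity result in lemma~\ref{bigresult}~b). First I would reformulate membership in $C_2^S$: since $(v,z_0) \in C_2^S$ means $g_i'(\bar y)z[v,z_0] = 0$ on $\I_i$ for all $i$, and since by \textbf{(A1)} the sets $\I_i^\e$ differ from $\I_i$ by finitely many extra pieces (and the touch points $\T_i$ have been removed for $i > r_1$), the function $\bb := g'(\bar y)z[v,z_0]|_{\I^\e} \in W^{(q),2}(\I^\e)$ vanishes on $\I$ but need not vanish on $\I^\e \setminus \I$. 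The idea is to perturb $\bb$ slightly so that the perturbed function is strictly negative on $\I^\e \setminus \I$ (which will yield the radiality condition defining $C_\infty^R$), while keeping it zero on $\I$ (to stay in $C_2^S$) and keeping it close in $W^{(q),2}$.

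Concretely, I would fix a smooth $W^{(q),\infty}$ function $w$ that is $\le 0$ on $\I^\e$, equals $0$ on a neighbourhood of $\I$ (in the relevant $W^{q_i,\cdot}$ sense, so all derivatives up to order $q_i$ vanish there too), and is $<0$ on $\I^\e \setminus \I$; such $w$ exists because $\I^\e \setminus \I$ lies at positive distance from $\I$ by \textbf{(A1)}. Then for small $t > 0$ I set $b^t := \bb + t w \in W^{(q),\infty}(\I^\e)$, which satisfies $b^t \to \bb$ in $W^{(q),2}(\I^\e)$ as $t \to 0$. By lemma~\ref{bigresult}~b) applied with this sequence, there exist $v^t \in \U$ with $v^t \to v$ in $L^2$ and $(\A_{\infty,z_0} v^t)|_{\I^\e} = b^t$. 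I then need to check that $(v^t, z_0) \in C_\infty^R \cap C_2^S$ for all small $t$. The $C_2^S$ part is immediate: $b^t = \bb = 0$ on $\I$, and the constraints \eqref{critical2}-\eqref{critical3} involving touch points and endpoint data must be re-examined, which is the delicate bookkeeping step — one must check the touch-point conditions $g_i'(\bar y_\tau)z[v^t]_\tau \le 0$ and $[\bar\eta_{i,\tau}]g_i'(\bar y_\tau)z[v^t]_\tau = 0$ together with \eqref{critical3}. Since $z[v^t]_\tau \to z[v]_\tau$ and $z[v^t]_T \to z[v]_T$ by lemma~\ref{estimsobo}, the equality-type conditions in \eqref{critical1}-\eqref{critical3} that hold for $(v,z_0)$ are the ones one must preserve exactly; these are linear conditions on $v^t$ of the type handled by lemma~\ref{bigresult}, and one may need to project $v^t$ onto the affine subspace they cut out, or choose $w$ to also respect them. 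The radiality is then clear: for $i$ with $q_i = 1$, $g_i(\bar y) + \bar\sigma g_i'(\bar y)z[v^t] = g_i(\bar y) + \bar\sigma b^t_i \le 0$ on $\I_i^\e$ since $g_i(\bar y) \le 0$ everywhere, $g_i(\bar y) = 0$ on $\I_i$ where $b^t_i = 0$, and $g_i(\bar y) < 0$ with a quantitative gap on $\I_i^\e \setminus \I_i$ (by \textbf{(A1)} and \textbf{(A4)}) against which $\bar\sigma b^t_i$ can be absorbed for suitable $\bar\sigma$; similarly for $q_i > 1$ using that $b^t_i < 0$ strictly on $\I_i^\e \setminus \I_i$ and $b^t_i, \ldots, (b^t_i)^{(q_i-1)}$ control $g_i(\bar y) + \bar\sigma g_i'(\bar y)z[v^t]$ near $\I_i$ via Taylor expansion with the negative leading term $\frac{d^{q_i}}{dt^{q_i}}g_i(\bar y)$.

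The main obstacle I anticipate is the last point made above — ensuring simultaneously that (a) the perturbed direction stays in the \emph{strict} cone $C_2^S$ (all the equality constraints preserved exactly, not just approximately), and (b) the radiality inequality $g_i(\bar y) + \bar\sigma g_i'(\bar y)z \le 0$ holds on the full enlarged set $\I_i^\e$, including near the boundary $\partial \I_i$ where $g_i(\bar y)$ and its first $q_i - 1$ derivatives vanish but the $q_i$-th does not. Controlling the sign there requires a careful Taylor/interpolation argument showing that the negative $q_i$-th derivative of $g_i(\bar y)$ dominates the contribution from $\bar\sigma g_i'(\bar y)z[v^t]$, whose value and first $q_i-1$ derivatives are prescribed to be $\bb_i + t w_i$ — and since $\bb_i$ itself vanishes on $\I_i$ while being merely $W^{q_i,2}$, one must use the $W^{q_i,\infty}$ regularity of the \emph{approximants} $b^t_i$ (which is exactly what lemma~\ref{bigresult}~b) delivers) rather than $\bb_i$ directly. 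This is precisely the kind of argument where the flaw in \cite{MR2779110} is signalled, so the care taken in invoking lemma~\ref{bigresult}~b) with $W^{(q),\infty}$ approximants — rather than working in $W^{(q),2}$ throughout — is the crux of the whole proof.
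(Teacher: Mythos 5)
Your approach (perturb $\bb := g'(\bar y)z[\vb,\zb_0]|_{\I^\e}$ and lift the perturbation through lemma~\ref{bigresult}~b)) is in the right family, but the construction has a genuine flaw exactly at the point you identify as delicate. First, the perturbing function $w$ you postulate cannot exist: $\I^\e \setminus \I$ is \emph{not} at positive distance from $\I$ (it accumulates at the entry/exit points $\partial \I_i$), so no $w$ can vanish on a neighbourhood of $\I$ and be strictly negative on all of $\I_i^\e \setminus \I_i$; likewise there is no ``quantitative gap'' $g_i(\bar y)\le -c<0$ on $\I_i^\e\setminus\I_i$, since $g_i(\bar y_t)\to 0$ as $t$ approaches $\partial\I_i$ from outside. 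Consequently, near $\partial\I_i$ your approximant $b^t_i$ coincides with $\bb_i$, which is only constrained on $\I_i$ and may well be positive just outside $\I_i$, while $g_i(\bar y)$ may vanish there to arbitrarily high order: reducibility is assumed only at touch points, so there is no negative leading term $\frac{\ud^{q_i}}{\ud t^{q_i}}g_i(\bar y)$ available at entry/exit points, and the Taylor argument you sketch has nothing to lean on. The radiality inequality $g_i(\bar y)+\bar\sigma\, g_i'(\bar y)z[v^t]\le 0$ on $\I_i^\e$ can therefore fail for your $v^t$. The correct fix is to make the approximating linearized constraint vanish \emph{identically on a full neighbourhood} $\I_i^\delta$ of $\I_i$: since $\bb_i$ and its first $q_i-1$ derivatives vanish at $\partial\I_i$, lemma~\ref{density} produces $b^\delta_i\in W^{(q_i),\infty}(\I_i^\e)$ with $b^\delta_i=0$ on $\I_i^\delta$ and $b^\delta_i\to\bb_i$ in $W^{(q_i),2}$; then on $\I_i^\delta$ radiality reduces to $g_i(\bar y)\le 0$, and off $\I_i^\delta$ one has a genuine uniform gap by compactness, so no strict negativity of the perturbation is needed at all.

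Second, the finitely many remaining conditions \eqref{critical2}--\eqref{critical3} (touch points and end-point constraints) are left unresolved in your proposal; an ad hoc ``projection'' of $v^t$ would destroy the identity $(\A_{\infty,z_0}v^t)|_{\I^\e}=b^t$ and the $L^\infty$ membership unless justified separately. The paper disposes of this cleanly: it introduces the linear spaces $X_\infty^+$ (directions in $\U\times\R^n$ whose linearized constraint vanishes on some $\I^\delta$) and $X_2$ (directions in $\V_2\times\R^n$ whose linearized constraint vanishes on $\I$), notes that $C_\infty^{R+}$ and $C_2^S$ are the \emph{same} polyhedral cone cut out by \eqref{critical2}--\eqref{critical3} in these two spaces, and invokes Lemma~1 of \cite{MR2472878} to reduce the whole statement to the density of the subspace $X_\infty^+$ in $X_2$, which is then exactly what lemma~\ref{density} combined with lemma~\ref{bigresult}~b) delivers. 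Without these two ingredients (vanishing on $\I^\delta$, and the polyhedral-cone density lemma), your argument does not close.
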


\begin{proof}[Proof of theorem~\ref{second}]
Let $(v,z_0) \in C^S_2$.
By lemma~\ref{polyhedricity}, there exists a sequence $(v^k,z_0^k) \in C^R_\infty \cap C^S_2$, $k \in \NN$, such that
\begin{equation*}
 (v^k,z_0^k) \longrightarrow (v,z_0).
\end{equation*}
By lemma~\ref{absecond}, there exists a sequence $\lambda^k \in \Lambda$, $k \in \NN$, such that
\begin{equation}\label{limit}
 \mathcal J[\lambda^k](v^k,z_0^k) \ge 0 .
\end{equation}
By theorem~\ref{first}, $\Lambda$ is strongly compact; then there exists $ \lambda \in \Lambda$ such that, up to a subsequence,
\begin{equation*}
 \lambda^k \longrightarrow \lambda.
\end{equation*}
We conclude by passing to the limit in \eqref{limit}, thanks to remark~\ref{exlemma}.
\end{proof}

\subsubsection{A density result} \label{densityresult}

In this section we prove lemma~\ref{polyhedricity}, using lemma~\ref{bigresult}~b).
A result similar to lemma~\ref{polyhedricity} is stated, in the framework of ODEs, as Lemma~5 in \cite{MR2779110}, but the proof given there is wrong.
Indeed, the costates in the optimal control problems of steps a) and c) are actually not of bounded variations and thus the solutions are not essentially bounded.
It has to be highlighted that in lemma~\ref{bigresult}~b) we get a sequence of essentially bounded $v^k$.

% We will use lemma~\ref{bigresult}~b) and the following, proved 

\begin{proof}[Proof of lemma~\ref{polyhedricity}]
We define one more cone:
\begin{equation*}
 C^{R+}_\infty = \left\{ (v,z_0) \in C^R_\infty \cap C^S_2 \, : \,
\exists \delta>0 \,:\, g_i'(\bar y)z[v,z_0] =0  \ \text{on } \I_i^{\delta}, \ i=1,\ldots,r \right\},
\end{equation*}
and we show actually that $C^{R+}_\infty$ is dense in $C^S_2$.

To do so, we consider the following two normed vector spaces:
\begin{gather*}
 X_\infty^+ := \left\{ (v,z_0) \in \U \times \R^n \,: \, 
\exists \delta>0 \,:\, g_i'(\bar y)z[v,z_0] =0  \ \text{on } \I_i^{\delta}, \ i=1,\ldots,r \right\},  \\
X_2:= \left\{ (v,z_0) \in \V_2 \times \R^n \,: \, 
 g_i'(\bar y)z[v,z_0] =0  \ \text{on } \I_i, \ i=1,\ldots,r \right\}.
\end{gather*}
Observe that $C_\infty^{R+}$ and $C_2^S$ are defined as the same polyhedral cone by \eqref{critical2}-\eqref{critical3},
respectively in $X_\infty^+$ and $X_2$.
In view of Lemma~1 in \cite{MR2472878}, it is then enough to show that $X_\infty^+$ is dense in $X_2$.

We will need the following lemma, proved in appendix~\ref{noass}:
\begin{density} \label{density}
 Let $\bb_i \in W^{(q_i),2}(\I_i^\e)$ be such that
\begin{equation}
 \bb_i = 0 \ \text{on } \I_i .
\end{equation}
Then there exists $b_i^\delta \in W^{(q_i),\infty}(\I_i^\e)$, $\delta \in (0,\e)$, such that
 $b_i^\delta \xrightarrow[\delta \to 0]{W^{(q_i),2}} \bb_i$ and
\begin{equation}
%  \left\{ \ba{l}
 b^\delta_i = 0 \ \text{on } \I^\delta_i .
%  b^\delta \xrightarrow[\delta \to 0]{W^{(q),2}} \bb .
% \ea \right.
\end{equation}
\end{density}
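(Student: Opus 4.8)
The plan is to prove Lemma~\ref{density} by a cutoff construction on each connected component of $\I_i^\e$ separately, since the statement decouples over $i$. Fix $i$ and drop it from the notation: we have $\bb \in W^{(q),2}(\I^\e)$ with $\bb = 0$ on $\I$, and we must produce $b^\delta \in W^{(q),\infty}(\I^\e)$ with $b^\delta = 0$ on $\I^\delta$ and $b^\delta \to \bb$ in $W^{(q),2}$ as $\delta \to 0$. By assumption \textbf{(A1)}, $\I^\e$ has finitely many connected components, each a closed subinterval of $[0,T]$; on a component disjoint from $\I$ the function $\bb$ is $W^{(q),2}$ but carries no vanishing constraint, so there we simply take $b^\delta := \bb$ truncated (mollified) to make it essentially bounded together with its derivatives up to order $q$ — more precisely, since $\bb$ already lies in $W^{(q),2}$, I would approximate it in $W^{(q),2}$ by $W^{(q),\infty}$ functions by standard mollification, which poses no difficulty. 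The work is on the components that meet $\I$.

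On such a component $[\alpha,\beta]$, the set $\I \cap [\alpha,\beta]$ is a relatively closed subset, and $\bb$ vanishes on it together with all its derivatives up to order $q-1$ (this follows because $\bb \in W^{(q),2} \subset C^{q-1}$, and a $C^{q-1}$ function vanishing on a set vanishes there with its derivatives at every accumulation point; at isolated points of $\I$ one argues directly, but generically $\I$ is a finite union of closed arcs by \textbf{(A1)}). I would then multiply $\bb$ by a smooth cutoff $\chi_\delta$ that equals $0$ on $\I^\delta$ and $1$ outside $\I^{2\delta}$, with $|\chi_\delta^{(k)}| \le C\delta^{-k}$. Set $b^\delta := \chi_\delta \bb$. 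Then $b^\delta = 0$ on $\I^\delta$ by construction, and $b^\delta \in W^{(q),\infty}$ because near $\I$ the function $\bb$ and its derivatives are small (of size controlled by the distance to $\I$ raised to appropriate powers), which compensates the blow-up of $\chi_\delta^{(k)}$. The convergence $b^\delta \to \bb$ in $W^{(q),2}$ requires estimating $\|\bb - b^\delta\|_{q,2} = \|(1-\chi_\delta)\bb\|_{q,2}$, and by the Leibniz rule this reduces to bounding $\int_{\I^{2\delta}} |\chi_\delta^{(k)}|^2 |\bb^{(q-k)}|^2$ for $0 \le k \le q$; using $|\bb^{(q-k)}(t)| \le \omega(\dist(t,\I))\,\dist(t,\I)^{\,k-1}$ for $k\ge 1$ — where $\omega$ is a modulus of continuity coming from $\bb^{(q-1)} \in C^0$ vanishing on $\I$ plus $\bb^{(q)}\in L^2$ — together with $|\chi_\delta^{(k)}|\le C\delta^{-k}$ and $|\I^{2\delta}\setminus\I^\delta|\le C\delta$, each term is $o(1)$ as $\delta\to 0$. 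For $k=0$ the term is $\int_{\I^{2\delta}}|1-\chi_\delta|^2|\bb^{(q)}|^2 \le \int_{\I^{2\delta}}|\bb^{(q)}|^2 \to 0$ by dominated convergence since $|\I^{2\delta}| \downarrow |\I|$ where $\bb^{(q)}$ is essentially zero (as $\bb\equiv 0$ on $\I$ forces $\bb^{(q)} = 0$ a.e.\ there).

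The main obstacle I anticipate is the careful bookkeeping of the decay rates of the derivatives $\bb^{(j)}$ as one approaches $\I$: one needs that $\bb$ vanishing on $\I$ with $\bb \in W^{(q),2}$ implies quantitative decay $|\bb^{(q-k)}(t)| \lesssim \dist(t,\I)^{k-1/2}\|\bb^{(q)}\|_{L^2}$ (a Taylor/Hardy-type estimate: integrate $\bb^{(q)}\in L^2$ from the boundary point of $\I$). This is where the Sobolev exponent $2$ and the order $q_i$ interact, and getting the powers of $\delta$ to match so that the sum over $k$ genuinely goes to zero is the crux; it works precisely because the cutoff loses $\delta^{-k}$ while the function gains $\delta^{\,k-1/2}$ and the shrinking domain of integration gives another $\delta^{1/2}$, for a net $\delta^0$ that then tends to $0$ by absolute continuity of the integral. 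Once this estimate is in hand the rest is routine. A minor point to handle cleanly is the case where a connected component of $\I$ touches an endpoint of $[0,T]$ or where $\I$ has isolated points, but \textbf{(A1)} (finitely many junction times, all touch points reducible hence removed) guarantees $\I_i$ is a finite union of closed intervals, so only finitely many configurations arise and each is treated by the same one-sided Taylor estimate.
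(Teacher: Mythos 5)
Your convergence analysis is essentially sound: the one-sided Taylor/Cauchy--Schwarz estimate $|\bb_i^{(q_i-k)}(t)|\lesssim \dist(t,\I_i)^{k-1/2}\bigl(\int |\bb_i^{(q_i)}|^2\bigr)^{1/2}$ (integral taken between $t$ and the nearest endpoint of $\I_i$), combined with $|\chi_\delta^{(k)}|\le C\delta^{-k}$ and the measure $O(\delta)$ of the transition region, does give $o(1)$ by absolute continuity of the integral, and the vanishing of the boundary data of order $\le q_i-1$ at the endpoints of $\I_i$ is legitimate since, by \textbf{(A1)} and the definition \eqref{bound}, singleton components of $\I_i$ occur only when $q_i=1$. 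The genuine gap is elsewhere: your $b^\delta:=\chi_\delta\,\bb_i$ does not, in general, satisfy the conclusion, because it need not belong to $W^{(q_i),\infty}(\I_i^\e)$. Off the transition region $\chi_\delta\equiv 1$, so $b^\delta=\bb_i$ there, and $\bb_i^{(q_i)}$ is only $L^2$; your boundedness argument only covers the neighbourhood of $\I_i$ where $\chi_\delta$ varies. This regularity is not cosmetic: the lemma is used through lemma~\ref{bigresult}~b), where $b^k\in W^{(q),\infty}(\I^\e)$ is precisely what produces directions $v^k\in\U=L^\infty$, the point on which the paper says the argument of \cite{MR2779110} breaks down. (Also, your preliminary case of components of $\I_i^\e$ disjoint from $\I_i$ is vacuous: $\I_i^\e$ is the $\e$-neighbourhood of $\I_i$, so every component meets $\I_i$.)

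The gap is fixable, and the natural fix essentially reproduces the paper's proof, which avoids cutoff functions altogether: since $\bb_i^{(j)}=0$ at the endpoints of $\I_i$ for $j=0,\ldots,q_i-1$, on each component of $\I_i^\e\setminus\I_i$ one approximates the top derivative $\bb_i^{(q_i)}$ in $L^2$ by an $L^\infty$ function vanishing within distance $\delta$ of $\I_i$ (truncation suffices), and re-integrates $q_i$ times from the endpoint with zero initial data (lemma~\ref{rempl2}, built on the operator $u\mapsto x_u$ of lemma~\ref{rempl}); this operator is continuous $L^2\to W^{q_i,2}$ and maps $L^\infty$ into $W^{q_i,\infty}$, so the resulting $b_i^\delta$ is genuinely in $W^{(q_i),\infty}(\I_i^\e)$, vanishes on $\I_i^\delta$, and converges in $W^{(q_i),2}$. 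You could instead keep your cutoff and additionally truncate $\bb_i^{(q_i)}$ away from $\I_i^\delta$ before re-integrating, but that amounts to the same construction; as written, your route trades away the essential boundedness that is the whole point of the statement.
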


Going back to the proof of lemma~\ref{polyhedricity},
let $(\vb,\zb_0) \in X_2$
and $\bb:=\left( \A_{2,\zb_0} \vb \right) |_{\I^\e }.$
%  \in W^{(q),2}(\I^\e)$
We consider a sequence $\delta_k \searrow 0$
and for $i=1,\ldots,r$, $b_i^k:= b_i^{\delta_k} \in W^{(q_i),\infty}(\I_i^\e)$ given by lemma~\ref{density}. 
Applying lemma~\ref{bigresult}~b) to $b^k$, we get $v^k$, $k \in \NN$.
We have $(v^k,\zb_0) \in X_\infty^+$ and $(v^k,\zb_0) \longrightarrow (\vb,\zb_0)$. 
The proof is completed.
\end{proof}

\subsection{Sufficient conditions}

We still are under the assumptions \textbf{(A0)}-\textbf{(A4)}.

\begin{legendre}
 A quadratic form $Q$ over a Hilbert space $X$ is a \emph{Legendre form} if it is weakly lower semi-continuous and if it satisfies the following property:
if $x^k \rightharpoonup x$ weakly in $X$ and $Q(x^k) \rightarrow Q(x)$, then $x^k \rightarrow x$ strongly in $X$.
\end{legendre}

% The suitable cone for the sufficient conditions is the analogue of $C_\infty$ in $L^2$.
% Suppose that $\Lambda \ne \emptyset$ and let $\bar \lambda=(\ud \bar \eta,\bar \Psi,\bar p) \in  \Lambda$.
% We consider the \emph{critical $L^2$ cone} $C_2$ as the set of $(v,z_0)\in \V_2 \times  \R^n$ such that:
% \begin{align}
% &&& \left\{ \begin{array}{ll}
%              g_i'(\bar y)\zb \le 0&  \text{on } \I_i,\\
%              g_i'(\bar y)\zb =0 &\text{on } \supp\left(\ud \bar \eta_i \right)\cap \I_i, 
%            \end{array} \right.
% \ i=1,\ldots,r,\\
% (C_2) &&&  \left\{ \begin{array}{l}
%             g_i'(\bar y_\tau)\zb_\tau \le 0, \\
%             \left[\bar \eta_{i,\tau}\right] g_i'(\bar y_\tau)\zb_\tau= 0,
%            \end{array}\right.
% \ \tau \in \mathcal T_i, \ i=r_1+1, \ldots,r ,\\
% &&& \left\{ \begin{array}{l}
%             D\Phi(\bar y_0,\bar y_T)(\zb_0,\zb_T) \in T_K \left(\Phi(\bar y_0,\bar y_T)\right), \\
%             \bar \Psi D\Phi(\bar y_0,\bar y_T)(\zb_0,\zb_T)=0,
%            \end{array} \right.
% \end{align}
% where $z=z[v,z_0] \in \Z_2$.

% \paragraph{}
\begin{sufficient} \label{sufficient}
 Suppose that for any $(v,z_0) \in C_2$, there exists
$\lambda\in \Lambda$ such that $\mathcal J[\lambda]$ is a Legendre form and
\begin{equation} \label{suffcond}
 \mathcal J[\lambda](v,z_0) > 0 \quad \text{if } (v,z_0) \ne 0 .
\end{equation}
Then $(\ub,\yb)$ is a local solution of ($P$) satisfying the following quadratic growth condition:
there exists $\beta >0$ and $\alpha > 0$ such that
\begin{equation} \label{quadgrowth}
 J(u,y_0) \ge J(\ub,\bar y_0) + \frac{1}{2} \beta \left( \|u- \bar u\|_2 + |y_0 -\bar y_0| \right)^2
\end{equation}
for any trajectory $(u,y)$ feasible for ($P$) and such that $\|u- \bar u\|_\infty + |y_0 -\bar y_0| \le \alpha$.
\end{sufficient}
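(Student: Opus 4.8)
The strategy is the classical one for second-order sufficient conditions: argue by contradiction, construct a minimizing sequence, extract a weak limit from a suitable rescaling of the increments, and use the Legendre property to force a contradiction between the first-order optimality of $(\ub,\yb)$ and the strict positivity of the Hessian. The main work is to translate everything into the reduced problem $(P_R)$ and to exploit remark~\ref{link}: $(\ub,\yb)$ is a local solution of $(P)$ satisfying~\eqref{quadgrowth} if and only if $(\ub,\yb_0)$ is a local solution of $(P_R)$ with the analogous quadratic growth in $\U\times\R^n$, and $(P_R)$ is qualified (in Robinson's sense) so that $\Lambda_R$, identified with $\Lambda$ via lemma~\ref{mult1}, is nonempty. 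I would therefore carry the whole argument inside $(P_R)$, where the abstract second-order sufficient condition machinery of \cite{MR1756264} applies.

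First I would suppose, for contradiction, that~\eqref{quadgrowth} fails; then there is a feasible sequence $(u^k,y^k)$ for $(P)$ with $u^k\to\ub$ in $L^\infty$, $y_0^k\to\yb_0$, and $J(u^k,y_0^k) \le J(\ub,\yb_0) + o(\gamma_k^2)$, where $\gamma_k := \|u^k-\ub\|_2 + |y_0^k-\yb_0| \to 0$ and $\gamma_k>0$. Set $(v^k,z_0^k):=(u^k-\ub,\, y_0^k-\yb_0)/\gamma_k$, so $\|v^k\|_2 + |z_0^k| $ is bounded; passing to a subsequence, $v^k \rightharpoonup v$ weakly in $\V_2$ and $z_0^k \to z_0$ in $\R^n$. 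Using the $C^\infty$ regularity of $\Gamma$ (assumption~\textbf{(A0)}) and lemma~\ref{estimsobo}, one gets the first-order expansions of the cost and of each constraint $G_1,G_2,G_3$ of $(P_R)$: feasibility of $(u^k,y_0^k)$ and $J(u^k,y_0^k)\le J(\ub,\yb_0)+o(\gamma_k^2)$ force, in the limit, $(v,z_0)\in C_2$ — this is where one checks that the weak limit satisfies the sign conditions~\eqref{critical1}-\eqref{critical3} defining the critical $L^2$ cone (the running-state inequalities pass to the limit because $g_i'(\yb)z[\cdot,\cdot]$ is weakly continuous $\V_2\times\R^n\to C$ by lemma~\ref{estimsobo}, and $DJ(\ub,\yb_0)(v,z_0)\ge 0$ by first-order optimality, combined with complementary slackness, gives the equalities on the supports).

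Next, take $\lambda\in\Lambda$ as in the hypothesis, so $\J[\lambda]$ is a Legendre form and $\J[\lambda](w)>0$ for $w\neq 0$. Writing the second-order Taylor expansion of the Lagrangian $L_R(\cdot,\cdot,\ud\rho,\nu,\Psi)$ along $(u^k,y_0^k)$ — using $D_{(u,y_0)}L_R=0$ (relation~\eqref{normalstation}), the expression~\eqref{lagclass}, and lemma~\ref{mult2} which identifies $D^2_{(u,y_0)^2}L_R$ with $\J[\lambda]$ — one obtains
\begin{equation*}
 L_R(u^k,y_0^k,\ud\rho,\nu,\Psi) = L_R(\ub,\yb_0,\ud\rho,\nu,\Psi) + \tfrac{1}{2}\gamma_k^2\, \J[\lambda](v^k,z_0^k) + o(\gamma_k^2).
\end{equation*}
On the other hand, feasibility of $(u^k,y_0^k)$ together with the sign conditions~\eqref{normalrho}-\eqref{normalpsi} on the multiplier gives $L_R(u^k,y_0^k,\ud\rho,\nu,\Psi)\le J(u^k,y_0^k)\le J(\ub,\yb_0)+o(\gamma_k^2) = L_R(\ub,\yb_0,\ud\rho,\nu,\Psi)+o(\gamma_k^2)$. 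Hence $\limsup_k \J[\lambda](v^k,z_0^k)\le 0$. Since $\J[\lambda]$ is weakly l.s.c. and $(v^k,z_0^k)\rightharpoonup(v,z_0)$ in $\V_2\times\R^n$, we get $\J[\lambda](v,z_0)\le \liminf_k\J[\lambda](v^k,z_0^k)\le 0$; as $(v,z_0)\in C_2$ and $\J[\lambda]>0$ off the origin, this forces $(v,z_0)=0$. Then $\J[\lambda](v^k,z_0^k)\to 0 = \J[\lambda](0)$ and, by the Legendre property, $(v^k,z_0^k)\to 0$ strongly in $\V_2\times\R^n$, i.e. $\|v^k\|_2\to 0$ and $z_0^k\to 0$. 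But $\|v^k\|_2+|z_0^k| \to 0$ contradicts $\|v^k\|_2+|z_0^k|\ge 1 - o(1)$ (which holds since $\gamma_k = \|u^k-\ub\|_2+|y_0^k-\yb_0| = \gamma_k(\|v^k\|_2+|z_0^k|)$ gives $\|v^k\|_2+|z_0^k|=1$ exactly — actually one should be slightly careful with $\|\cdot\|_\infty$ vs $\|\cdot\|_2$ in the definition of $\gamma_k$; normalizing by $\gamma_k=\|u^k-\ub\|_2+|y_0^k-\yb_0|$ makes $\|v^k\|_2+|z_0^k|=1$, which is the clean contradiction). This proves~\eqref{quadgrowth} for $(P_R)$, hence for $(P)$, and in particular $(\ub,\yb)$ is a local solution.

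The main obstacle is the passage to the limit in the constraints to show $(v,z_0)\in C_2$ rather than merely in some larger cone: one must verify that the running-state constraint expansion is uniform enough (using~\eqref{quadgrowth}'s $L^\infty$-neighbourhood hypothesis and lemma~\ref{estimsobo}) so that the nonactive constraints stay strictly negative along the sequence while the active ones produce the correct linearized sign conditions and the complementary-slackness equalities on $\supp(\ud\bar\eta_i)$; a secondary technical point is controlling the remainder term $o(\gamma_k^2)$ in the Lagrangian expansion uniformly, which again follows from the $C^\infty$ regularity of $\Gamma$, lemma~\ref{estimsobo}, and boundedness of $(v^k,z_0^k)$ in $\V_2\times\R^n$ together with $u^k\to\ub$ in $L^\infty$. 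These are exactly the ingredients packaged in the abstract sufficient-condition theorem (e.g. Theorem~3.63 in \cite{MR1756264}) applied to $(P_R)$, so in a streamlined write-up one may simply invoke that result once lemma~\ref{mult2} and the identification of $\Lambda_R$ with $\Lambda$ are in place.
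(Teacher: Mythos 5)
Your proposal is correct and follows essentially the same route as the paper: contradiction with a rescaled minimizing sequence, weak limit shown to lie in $C_2$ via the expansions and the multiplier/complementarity identity, the bound $\limsup_k \J[\lambda](v^k,z_0^k)\le 0$ obtained from the second-order expansion of $L_R$ together with $L_R\le J$, and the Legendre-form property forcing strong convergence to zero, contradicting the normalization $\|v^k\|_2+|z_0^k|=1$. Only two cosmetic remarks: the hypothesis gives a $\lambda$ adapted to the particular limit direction (not one with $\J[\lambda]>0$ on all of $C_2\setminus\{0\}$), which is all your argument actually uses, and no qualification assumption is needed since the hypothesis itself guarantees $\Lambda\neq\emptyset$.
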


\begin{huu}
 Let $\lambda =(\ud \eta, \Psi,p) \in \Lambda$. 
The \emph{strengthened Legendre-Clebsch condition}
\begin{equation} \label{clebsch}
 \exists \bar \alpha >0 \, : \, D^2_{uu}H[p](t,\ub_t,\yb_t) \ge \bar \alpha I_m \ \text{for a.a. } t \in  [0,T] 
\end{equation}
is satisfied \textit{iff} $\mathcal J[\lambda]$ is a Legendre form
(it can be porved by combining Theorem~11.6 and Theorem~3.3 in \cite{MR0046590}).
\end{huu}

\begin{proof}[Proof of theorem~\ref{sufficient}]

\noindent \textit{(i)}
 Let us assume that \eqref{suffcond} holds but that \eqref{quadgrowth} does not.
Then there exists a sequence of feasible trajectories $(u^k,y^k)$ such that
\begin{equation} \label{negation}
\left\{ \ba{l}
 (u^k,y_0^k) \xrightarrow[]{L^\infty \times \R^n}(\bar u,\bar y_0),\ (u^k,y_0^k) \ne (\bar u,\bar y_0),\\
 J(u^k,y_0^k) \le J(\bar u,\bar y_0) + o \left( \|u^k- \bar u\|_2 + |y_0^k -\bar y_0| \right)^2.
\ea \right.
\end{equation}
Let $\sigma_k:= \|u^k- \bar u\|_2 + |y_0^k -\bar y_0|$ and
$(v^k,z_0^k):=\sigma_k^{-1}\left( u^k-\ub,y_0^k-\yb_0 \right) \in \U \times \R^n$.
There exists $(\vb,\zb_0)\in \V_2 \times \R^n$ such that, up to a subsequence,
\begin{equation*}
(v^k,z_0^k)\rightharpoonup (\vb,\zb_0) \text{ weakly in } \V_2 \times \R^n . 
\end{equation*}

\bigskip
\noindent
\textit{(ii)}
We claim that  $(\vb,\zb_0) \in C_2$.

Let $z^k:=z[v^k,z_0^k] \in \Y$ and $\zb:=z[\vb,\zb_0] \in \Z_2$.
We derive from the compact embedding $\Z_2 \subset C\left( [0,T];\R^n \right)$ that, up to a subsequence,
\begin{equation} \label{compactinj}
 z^k \rightarrow \zb \text{ in } C\left( [0,T];\R^n \right).
\end{equation}
Moreover, it is classical (see e.g. the proof of Lemma~20 in \cite{MR2421298}) that
\begin{gather}
  J(u^k,y_0^k)= J(\ub,\yb_0) + \sigma_k DJ(\ub,\yb_0)(v^k,z_0^k) + o(\sigma_k)
%  \ \text{in } \R 
,\label{have1} \\
  g(y^k)= g(\yb)+\sigma_k g'(\yb)z^k + o(\sigma_k)
% \ \text{in } C\left([0,T];\R^r \right)
 ,\label{have2} \\
 \Phi(y_0^k,y^k_T) = \Phi(\yb_0,\yb_T) + \sigma_k D\Phi(\bar y_0,\bar y_T)(z_0^k,z^k_T) + o(\sigma_k)
% \ \text{in } \R^s ,
.\label{have3}
\end{gather}
It follows that
\begin{gather}
  DJ(\ub,\yb_0)(\vb,\zb_0)  \le 0, \label{part1}\\
\left\{ \ba{ll}
 g_i'(\yb)\zb \le 0 \ \text{on } \I_i & i=1,\ldots,r_1 ,\\
 g_i'(\yb)\zb \le 0 \ \text{on } \I_i\cup \T_i & i=r_1+1,\ldots,r .
\ea \right. \label{part2} \\
 D\Phi(\bar y_0,\bar y_T)(\zb_0,z[\vb,\zb_0]_T) \in T_K \left(\Phi(\bar y_0,\bar y_T)\right) , \label{part3}
\end{gather}
using \eqref{negation} for \eqref{part1} and the fact that $(\ub,\yb)$, $(u^k,y^k)$ are feasible for \eqref{part2} and \eqref{part3}.
By lemma~\ref{coupe}, given $\bar \lambda=(\ud \bar \eta,\bar \Psi,\bar p) \in \Lambda$, we have
\begin{equation*}
  DJ(\ub,\yb_0)(\vb,\zb_0) + \int_{[0,T]}\ud \bar \eta_t g'(\yb_t) + \bar \Psi  D\Phi(\bar y_0,\bar y_T)(\zb_0,\zb_T) = 0.
\end{equation*}
Together with definition~\ref{lagrangemult} and \eqref{part1}-\eqref{part3}, it implies that each of the three terms is null, i.e. $(\vb,\zb_0) \in C_2$.

\bigskip
\noindent
\textit{(iii)}
Then by \eqref{suffcond} there exists $ \bar \lambda \in \Lambda$ such that $\J[\bar \lambda]$ is a Legendre form and
\begin{equation} \label{conclu1}
 0  \le \J[\bar \lambda](\vb,\zb_0) .
\end{equation}
In particular, $\J[\bar \lambda]$ is weakly lower semi continuous. Then
\begin{equation}  \label{conclu2}
 \J[\bar \lambda](\vb,\zb_0) \le \liminf_{k} \J[\bar \lambda](v^k,z_0^k) \le \limsup_{k}\J[\bar \lambda](v^k,z_0^k).
\end{equation}
And we claim that
\begin{equation} \label{conclu3}
 \limsup_{k}\J[\bar \lambda](v^k,z_0^k) \le 0 .
\end{equation}
Indeed, similarly to \eqref{have1}-\eqref{have3}, one can show that, $\bar \lambda$ being a multiplier,
\begin{equation}
 L_R(u^k,y_0^k,\bar \lambda) -  L_R(\ub,\yb_0,\bar \lambda) \label{cool}
=\frac{1}{2} \sigma_k^2 D^2_{(u,y_0)^2}L_R(\ub,\yb_0,\bar \lambda)(v^k,z_0^k)^2 + o(\sigma_k^2).
\end{equation}
Since $L_R(u^k,y_0^k,\bar \lambda) -  L_R(\ub,\yb_0,\bar \lambda) \le J(u^k,y_0^k)-J(\ub,\yb_0)$,
we derive from \eqref{negation}, \eqref{cool} and lemma~\ref{mult2} that
\begin{equation} \label{contradict}
  \J[\bar \lambda](v^k,z_0^k) \le  o(1) .
\end{equation}

\bigskip
\noindent
\textit{(iv)}
We derive from \eqref{conclu1}, \eqref{conclu2} and \eqref{conclu3} that
\begin{equation*}
 \J[\bar \lambda](v^k,z_0^k)\longrightarrow 0 =  \J[\bar \lambda](\vb,\zb_0) .
\end{equation*}
By \eqref{suffcond}, $(\vb,\zb_0) = 0 $, and by definition of a Legendre form, $(v^k,z_0^k)\longrightarrow(\vb,\zb_0)$  strongly in $\V_2 \times \R^n$.
We get a contradiction with the fact that $\|v^k\|_2 + |z^k_0| = 1 $ for all $k$.

\end{proof}

In view of theorems~\ref{second} and~\ref{sufficient} it appears that under an extra assumption, of the type of strict complementarity on the running state constraints,
we can state no-gap second-order optimality conditions.
We denote by $\ri\left( \Lambda \right)$ the relative interior of $\Lambda$ (see Definition~2.16 in \cite{MR1756264}).
\begin{nogap}
Let $(\ub,\yb)$ be a qualified feasible trajectory for $(P)$.
We assume that $C_2^S = C_2$ and that for any $\lambda \in \ri\left( \Lambda \right)$,
the strengthened Legendre-Clebsch condition \eqref{clebsch} holds.
Then $(\ub,\yb)$ is a local solution of ($P$) satisfying the quadratic growth condition \eqref{quadgrowth}
\textnormal{iff}
for any $(v,z_0) \in C_2 \setminus \{0\}$, there exists
$\lambda\in \Lambda$ such that
\begin{equation} \label{finfin}
 \mathcal J[\lambda](v,z_0) > 0  .
\end{equation}
\end{nogap}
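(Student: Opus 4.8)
The plan is to show both implications essentially by reducing to Theorems~\ref{second} and~\ref{sufficient}, with the extra assumptions bridging the gap between necessary and sufficient conditions. For the "if" direction, observe that if \eqref{finfin} holds for every $(v,z_0) \in C_2 \setminus \{0\}$, we want to upgrade it to the hypothesis of Theorem~\ref{sufficient}, i.e. we need a $\lambda$ for which $\J[\lambda]$ is moreover a Legendre form. The idea is to exploit convexity of $\Lambda$ together with \textbf{(A3)} and the relative interior: given $(v,z_0) \neq 0$, pick $\lambda_1 \in \Lambda$ with $\J[\lambda_1](v,z_0) > 0$ and any $\lambda_2 \in \ri(\Lambda)$; by remark~\ref{huu} the strengthened Legendre-Clebsch condition \eqref{clebsch} holds for $\lambda_2$, so $\J[\lambda_2]$ is a Legendre form. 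For $t$ small, $\lambda_t := (1-t)\lambda_2 + t\lambda_1$ still lies in $\Lambda$ (by convexity) and, since $\J$ is affine in $\lambda$ (remark~\ref{exlemma}) and the set of $p$ satisfying \eqref{clebsch} with a uniform $\bar\alpha$ is relatively open along the segment, $\J[\lambda_t]$ remains a Legendre form for $t$ small while $\J[\lambda_t](v,z_0) = (1-t)\J[\lambda_2](v,z_0) + t\J[\lambda_1](v,z_0)$. Taking $t$ small enough keeps this positive provided $\J[\lambda_2](v,z_0) \ge 0$; if $\J[\lambda_2](v,z_0) < 0$ one still gets positivity for $t$ close to $1$, as long as one checks \eqref{clebsch} survives — which it does since $\lambda_1$ need not satisfy it, so one must argue more carefully, perhaps by instead using that \emph{every} $\lambda \in \ri(\Lambda)$ satisfies \eqref{clebsch} and that $\ri(\Lambda)$ is dense in $\Lambda$, so one can perturb $\lambda_1$ itself into $\ri(\Lambda)$ by a small move, preserving the strict inequality $\J[\cdot](v,z_0)>0$ by continuity (remark~\ref{exlemma}) and gaining \eqref{clebsch}. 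This cleaner route is the one I would write up. Then Theorem~\ref{sufficient} applies and gives the quadratic growth \eqref{quadgrowth}.

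For the "only if" direction, suppose $(\ub,\yb)$ is a local solution satisfying the quadratic growth \eqref{quadgrowth}. By Theorem~\ref{second}, for every $(v,z_0) \in C_2^S$ there is $\lambda \in \Lambda$ with $\J[\lambda](v,z_0) \ge 0$; and since we assume $C_2^S = C_2$, this holds for all $(v,z_0) \in C_2$. It remains to promote the non-strict inequality to the strict one \eqref{finfin} for $(v,z_0) \neq 0$. Here the quadratic growth must be used directly: revisiting the computation \eqref{cool} in the proof of Theorem~\ref{sufficient}, for a feasible trajectory one has $L_R(u,y_0,\lambda) - L_R(\ub,\yb_0,\lambda) \le J(u,y_0) - J(\ub,\yb_0)$, and \eqref{quadgrowth} forces a lower bound $\frac{1}{2}\beta\sigma^2$ on the right side along suitable sequences; combined with a second-order expansion this yields $\J[\lambda](v,z_0) \ge \beta |z_0|^2 + \beta\|v\|_2^2 \cdot(\text{const}) > 0$ when $(v,z_0) \neq 0$ — but this requires choosing, for the given direction $(v,z_0)$, a feasible path tangent to it, which needs the qualification/metric regularity to construct feasible trajectories realizing the critical direction. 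Alternatively, and more safely, I would pick $\lambda \in \ri(\Lambda)$ so that \eqref{clebsch} holds and hence $\J[\lambda]$ is a Legendre form (remark~\ref{huu}); a Legendre form that is $\ge 0$ on the closed cone $C_2$ and whose associated bilinear form is, by \eqref{clebsch}, coercive in the $v$-variable, is automatically $>0$ on $C_2 \setminus \{0\}$ unless it vanishes on a nonzero $(v,z_0)$ with $v = 0$; the case $v=0$, $z_0 \neq 0$ is excluded because then $\J[\lambda](0,z_0)$ reduces to a finite-dimensional quadratic form that, were it to vanish, would contradict the quadratic growth in the $|y_0 - \bar y_0|$ direction. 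Assembling these pieces gives \eqref{finfin}.

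The main obstacle is the \emph{only if} direction: turning the quadratic growth \eqref{quadgrowth}, which is an estimate over genuine feasible trajectories, into the strict positivity of $\J[\lambda]$ over the abstract critical cone $C_2$. The difficulty is that an arbitrary critical direction $(v,z_0) \in C_2$ need not be the velocity of any feasible trajectory — this is precisely why the density lemma~\ref{polyhedricity} and the construction in lemma~\ref{bigresult}~b) were needed for Theorem~\ref{second}. I expect the correct argument to go through the Legendre-form property: under \eqref{clebsch} for $\lambda \in \ri(\Lambda)$, $\J[\lambda]$ restricted to $C_2$ is a weakly lower semicontinuous quadratic form, coercive modulo a finite-dimensional (compact) part, and the hypothesis $C_2^S = C_2$ together with Theorem~\ref{second} gives non-negativity; a standard argument (as in the classical theory, e.g. the decomposition of a Legendre form on a cone) then shows that non-negativity plus the absence of a nonzero null direction — which must be ruled out separately using that a null direction of a Legendre form on which $\J$ vanishes would, by \eqref{cool}-type expansions, violate \eqref{quadgrowth} — yields strict positivity. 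I would flag that this last step relies on remark~\ref{huu} and on the strong compactness of $\Lambda$ from Theorem~\ref{first}.
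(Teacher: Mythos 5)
Your ``if'' direction is essentially the paper's argument: since \eqref{finfin} is an open condition in $\lambda$ (by affinity and continuity of $\J[\lambda](v,z_0)$ in $\lambda$), one may move the multiplier into $\ri(\Lambda)$, where \eqref{clebsch} and remark~\ref{huu} give the Legendre-form property, and then invoke theorem~\ref{sufficient}. That half is fine.

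The ``only if'' direction, however, has a genuine gap, and you in fact flag it yourself without closing it. Theorem~\ref{second} applied to $(P)$ only yields, for each $(v,z_0)\in C_2^S=C_2$, \emph{some} $\lambda\in\Lambda$ with $\J[\lambda](v,z_0)\ge 0$; neither of your two routes upgrades this to the strict inequality \eqref{finfin}. The route through \eqref{cool}-type expansions requires, for an arbitrary critical direction, a feasible trajectory tangent to it, which is precisely what is not available (this is why lemmas~\ref{bigresult} and~\ref{polyhedricity} were needed in the first place). The ``safer'' route is not correct either: you fix a single $\lambda\in\ri(\Lambda)$ and treat $\J[\lambda]$ as nonnegative on all of $C_2$, but theorem~\ref{second} gives a direction-dependent multiplier, not one multiplier nonnegative on the whole cone; and even granting nonnegativity, a Legendre form that is $\ge 0$ on a closed cone can vanish on nonzero directions, so strictness does not follow without exactly the argument you are trying to avoid. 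The missing idea is the standard perturbation of the cost: set $J_\beta(u,y_0):=J(u,y_0)-\tfrac{1}{2}\beta\left(\|u-\ub\|_2+|y_0-\yb_0|\right)^2$ with $\beta$ from \eqref{quadgrowth}. Then $(\ub,\yb_0)$ is a local solution of the problem obtained by replacing $J$ with $J_\beta$ under the same constraints $G_i(u,y_0)\in K_i$; this perturbed problem has the same Lagrange multipliers (the derivative of the quadratic term vanishes at $(\ub,\yb_0)$), the same critical cones, and its Hessian of Lagrangian is
\begin{equation*}
\J_\beta[\lambda](v,z_0)=\J[\lambda](v,z_0)-\beta\left(\|v\|_2+|z_0|\right)^2 .
\end{equation*}
Applying theorem~\ref{second} to this perturbed problem gives, for every $(v,z_0)\in C_2\setminus\{0\}$, a $\lambda\in\Lambda$ with $\J[\lambda](v,z_0)\ge\beta\left(\|v\|_2+|z_0|\right)^2>0$, which is \eqref{finfin}. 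Without this (or an equivalent) device, your proof of the necessity half does not go through.
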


\begin{proof}
 Suppose \eqref{finfin} holds for some $\lambda \in \Lambda$; then it holds for some $\lambda \in \ri\left( \Lambda \right)$ too and now $\J[\lambda]$ is a Legendre form.
 By theorem~\ref{sufficient}, there is locally quadratic growth.

Conversely, suppose \eqref{quadgrowth} holds for some $\beta >0$ and let 
\[
 J_\beta (u,y_0):=J(u,y_0)-  \frac{1}{2}\beta \left( \|u- \bar u\|_2 + |y_0 -\bar y_0| \right)^2 .
\]
Then $(\ub,\yb_0)$ is a local solution of the following optimization problem:
\[
\displaystyle \min_{(u,y_0)\in \mathcal U \times \R^n} J_\beta(u,y_0), \quad \text{subject to} \quad 
G_i(u,y_0) \in K_i , \ i=1,2,3.
\]
This problem has the same Lagrange multipliers as the reduced problem (write that the respective Lagrangian is stationary at $(\ub,\yb_0)$),
the same critical cones and its Hessian of Lagrangian is
\[
 \J_\beta[\lambda](v,z_0) =  \J[\lambda](v,z_0) -\beta \left( \|v\|_2 + |z_0| \right)^2 .
\]
Theorem~\ref{second} applied to this problem gives \eqref{finfin}.
\end{proof}

\begin{finalrmk}
A sufficient condition (not necessary \textit{a priori}) to have $C_2^S = C_2$
is the existence of $(\ud  \bar \eta, \bar \Psi,\bar p) \in \Lambda$ such that
\begin{equation*}
 \supp (\ud \bar \eta_i) = \I_i, \ i = 1, \ldots,r .
\end{equation*}

\end{finalrmk}

% \newpage
\appendix
\section{Appendix}

\subsection{Functions of bounded variations} \label{BVfunc}

The main reference here is~\cite{MR1857292}, Section~3.2.
Recall that with the definition of $BV\left([0,T];\R^{n*}\right)$ given at the beginning of section~\ref{Lagrange multipliers},
for $h \in BV\left([0,T];\R^{n*}\right)$ there exist $h_{0_-}, h_{T_+} \in \R^{n*}$
such that \eqref{moinsplus} holds.
\begin{out} \label{out}
Let $h \in BV\left([0,T];\R^{n*}\right)$.
Let $h^l$, $h^r$ be defined for all $t \in [0,T]$ by
\begin{align}
 h^l_t & := h_{0_-} + \ud h\big( [0,t) \big) ,\\
 h^r_t & := h_{0_-} + \ud h\big( [0,t] \big).
\end{align}
Then they are both in the same equivalence class of $h$, $h^l$ is left continuous, $h^r$ is right continuous and, for all $t \in [0,T]$,
\begin{align}
 h^l_t & = h_{T_+}-  \ud h\big( [t,T] \big),  \label{yooo} \\
 h^r_t & = h_{T_+}-  \ud h\big( (t,T] \big).
\end{align}
 \end{out}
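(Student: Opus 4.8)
The plan is to reduce everything to the classical one-dimensional facts that the cumulative function of a finite measure has that measure as distributional derivative, and that a distribution with vanishing derivative on an interval is a.e. constant (see \cite{MR1857292}, Section~3.2). First I would, without loss of generality, replace $I$ by the connected component of $[0,T]$ in $I$, so that $I=(a,b)$ with $a<0<T<b$; this changes nothing since $h^l$ and $h^r$ only involve $\ud h$ restricted to $[0,T]$. Because $\supp(\ud h)\subset[0,T]$, for every $t\in[0,T]$ one has $\ud h([0,t))=\ud h((a,t))$ and $\ud h([0,t])=\ud h((a,t])$, so $h^l$ and $h^r$ are the restrictions to $[0,T]$ of the functions $H^l_t:=h_{0_-}+\ud h((a,t))$ and $H^r_t:=h_{0_-}+\ud h((a,t])$, $t\in I$. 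These are well defined since $\ud h$ is a finite $\R^{n*}$-valued measure.

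Next I would show $h^l=h^r=h$ a.e.\ on $[0,T]$. A Fubini computation gives, for every $\varphi\in C^\infty_c(I)$, the identity $\int_I H^l_t\,\varphi'(t)\,\ud t=-\int_I\varphi\,\ud h$, so the distributional derivative of $H^l$ is $\ud h=Dh$; hence $D(H^l-h)=0$ on the interval $I$, so $H^l-h$ equals a constant a.e., and evaluating on $(a,0)$ — where $h=h_{0_-}$ a.e.\ by \eqref{moinsplus} and $H^l\equiv h_{0_-}$ — shows the constant is $0$. Thus $H^l=h$ a.e.; since $H^r$ differs from $H^l$ only on the (at most countable, hence Lebesgue-null) set of atoms of $\ud h$, also $H^r=h$ a.e.

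Then I would derive the bridging identity $h_{T_+}-h_{0_-}=\ud h([0,T])$: for a.e.\ $t\in(T,b)$ we have $h_t=h_{T_+}$ by \eqref{moinsplus} and $h_t=H^l_t=h_{0_-}+\ud h((a,t))=h_{0_-}+\ud h([0,T])$, once more using $\supp(\ud h)\subset[0,T]$. With this, the alternative formulas are immediate: for $t\in[0,T]$, decomposing $[0,T]=[0,t)\sqcup[t,T]$ gives $h^l_t=h_{0_-}+\ud h([0,t))=h_{T_+}-\ud h([0,T])+\ud h([0,t))=h_{T_+}-\ud h([t,T])$, which is \eqref{yooo}, and $[0,T]=[0,t]\sqcup(t,T]$ gives $h^r_t=h_{T_+}-\ud h((t,T])$. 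One-sided continuity follows from continuity of the finite total variation measure $|\ud h|$: if $s_k\uparrow t\in(0,T]$ then $[0,s_k)\uparrow[0,t)$, so $\ud h([0,s_k))\to\ud h([0,t))$ and $h^l$ is left continuous; if $s_k\downarrow t\in[0,T)$ then $[0,s_k]\downarrow[0,t]$, so by continuity from above (legitimate since $|\ud h|(I)<\infty$) $\ud h([0,s_k])\to\ud h([0,t])$ and $h^r$ is right continuous.

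I expect no genuine obstacle here: the one step with any subtlety is the identification of the distributional derivative of the cumulative function $t\mapsto\ud h((a,t))$ with $\ud h$ and the passage from ``vanishing distributional derivative on $I$'' to ``a.e.\ constant'', both of which are standard; everything else is bookkeeping with the support condition $\supp(\ud h)\subset[0,T]$.
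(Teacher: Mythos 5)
Your argument is correct, but it follows a genuinely different route from the paper: the paper disposes of this lemma with a one-line citation (Theorem~3.28 in the reference \cite{MR1857292} on BV functions of one variable, which provides exactly the left- and right-continuous ``good representatives'' of a BV function together with their expression through the derivative measure), whereas you reprove the statement from scratch in this particular setting. Your three ingredients --- the Fubini computation identifying the distributional derivative of the cumulative function $t\mapsto h_{0_-}+\ud h\big((a,t)\big)$ with $\ud h$, the fact that a distribution with vanishing derivative on a connected open interval is a.e.\ constant (which is precisely why your reduction to the connected component of $[0,T]$ in $I$ is needed), and continuity of the finite measure from below/above for the one-sided continuity --- are all sound, and the bookkeeping with $\supp(\ud h)\subset[0,T]$ is handled correctly: anchoring the constant on $(a,0)$ via \eqref{moinsplus} gives $H^l=h$ a.e., the atoms argument gives $H^r=h$ a.e., and evaluating on $(T,b)$ yields the bridging identity $h_{T_+}-h_{0_-}=\ud h\big([0,T]\big)$, from which \eqref{yooo} and its right-continuous analogue follow by splitting $[0,T]$. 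What the paper's approach buys is brevity and reliance on a standard reference; what yours buys is a self-contained elementary proof that makes explicit where connectedness, the support condition, and the finiteness of $|\ud h|$ (for continuity from above) actually enter --- and it derives the identity $h_{T_+}-h_{0_-}=\ud h\big([0,T]\big)$, which in the cited theorem is implicit in the formula for the good representative.
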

 
 \begin{proof}
Theorem~3.28 in~\cite{MR1857292}.
 \end{proof}

The identification between measures and functions of bounded variations that we mention at the beginning of section~\ref{Lagrange multipliers}
relies on the following:

\begin{isombv} \label{isombv}
 The linear map
\begin{equation}
  (c, \mu) \longmapsto \Big( h \colon t \mapsto  c- \mu\left([t,T]\right) \Big)
\end{equation}
is an isomorphism between $\R^{r*} \times \M\left( [0,T];\R^{r*} \right)$ and $BV\left( [0,T];\R^{r*} \right)$, whose inverse is
\begin{equation}
 h \longmapsto \Big( h_{T_+}, \ud h \Big) .
\end{equation}
\end{isombv}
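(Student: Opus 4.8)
The plan is to check by hand that $\Theta\colon (c,\mu)\mapsto h$, with $h_t:=c-\mu([t,T])$, maps into $BV([0,T];\R^{r*})$, that $h\mapsto(h_{T_+},\ud h)$ is a two-sided inverse, and that both maps are continuous, so that $\Theta$ is a topological linear isomorphism. For well-definedness: since $\mu$ is a finite measure, $t\mapsto\mu([t,T])$ is bounded, hence $h\in L^\infty(I)\subset L^1(I)$ (note $\mu([t,T])=\mu([0,T])$ for $t\le 0$ and $\mu([t,T])=0$ for $t>T$, so $h$ is constant outside $[0,T]$, with value $c$ on $(T,+\infty)\cap I$). To identify $Dh$, take $\varphi\in C^\infty_c(I;\R^r)$; the constant $c$ contributes nothing and Fubini's theorem yields
\[
\langle Dh,\varphi\rangle=-\int_I h_t\,\varphi'(t)\,\ud t=\int_I\Big(\int_{[0,T]}\mathbf 1_{\{t\le s\}}\,\ud\mu_s\Big)\varphi'(t)\,\ud t=\int_{[0,T]}\varphi(s)\,\ud\mu_s,
\]
using $\varphi(-\infty)=0$. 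Hence $Dh=\mu$, so $\supp(Dh)=\supp\mu\subset[0,T]$ and $h\in BV([0,T];\R^{r*})$; moreover $h_{T_+}=c$. This already gives $(h_{T_+},\ud h)=(c,\mu)$, so $\Theta$ is injective and $h\mapsto(h_{T_+},\ud h)$ is a left inverse.

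For surjectivity I would take an arbitrary $h\in BV([0,T];\R^{r*})$ and show $\Theta(h_{T_+},\ud h)=h$ as elements of $L^1(I)$. On $[0,T]$ this is precisely the identity $h^l_t=h_{T_+}-\ud h([t,T])$ of Lemma~\ref{out}, combined with the fact that the left-continuous representative $h^l$ belongs to the equivalence class of $h$. On $(T,+\infty)\cap I$ both sides equal $h_{T_+}$ because $\ud h([t,T])=0$ there. On $(-\infty,0)\cap I$ both sides equal the constant $h_{T_+}-\ud h([0,T])$, which coincides with $h_{0_-}$ by \eqref{yooo} and the left-continuity of $h^l$ at $0$. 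Hence the composition $\Theta\circ\big(h\mapsto(h_{T_+},\ud h)\big)$ is the identity on $BV([0,T];\R^{r*})$.

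Linearity of $\Theta$ is obvious, and continuity in both directions (for the natural norms, e.g.\ $|c|+\|\mu\|_{\M}$ on the source and $\|h\|_{L^1(I)}+\|\ud h\|_{\M}$ on the target) follows from crude estimates: $\|\Theta(c,\mu)\|_{L^1(I)}\le|I|\,(|c|+\|\mu\|_{\M})$ and $\|\ud h\|_{\M}=\|\mu\|_{\M}$, while conversely $|h_{T_+}|$ is controlled by $\|h\|_{L^1((T,+\infty)\cap I)}$ since $h$ is a.e.\ constant there. I expect the only point requiring care to be the surjectivity step: reconciling the pointwise formulas of Lemma~\ref{out}, which hold on $[0,T]$, with the behaviour of representatives on $I\setminus[0,T]$ and with the identification of $BV$ functions with $L^1$ equivalence classes. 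The derivative computation in the first step is routine once Fubini is applied.
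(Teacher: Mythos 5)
Your argument is correct, but it is a genuinely different route from the paper's: the paper disposes of this lemma with a one-line citation (Theorem~3.30 in \cite{MR1857292}, the one-dimensional characterization of $BV$ functions via their distributional derivatives), just as it cites Theorem~3.28 there for lemma~\ref{out}. You instead verify everything by hand: the Fubini computation identifying $Dh=\mu$ (hence $\supp(Dh)\subset[0,T]$ and $h_{T_+}=c$), which gives injectivity and the left-inverse property; surjectivity via the representation $h^l_t=h_{T_+}-\ud h([t,T])$ of lemma~\ref{out}, taking care to match representatives on $I\setminus[0,T]$ (your identification $h_{0_-}=h_{T_+}-\ud h([0,T])$ is exactly \eqref{yooo} at $t=0$ combined with $h^l_0=h_{0_-}$); and explicit norm estimates making the isomorphism topological, which the paper never states but implicitly uses. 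What your version buys is self-containedness (modulo lemma~\ref{out}) and the quantitative continuity of both directions; what the citation buys is brevity. Two small points to make explicit if you keep this proof: the open set $I$ must be taken bounded (otherwise $h\equiv c\neq 0$ near $+\infty$ is not in $L^1(I)$ and the statement itself would degenerate, so this is clearly the intended reading, but your estimate $\|\Theta(c,\mu)\|_{L^1(I)}\le|I|\,(|c|+\|\mu\|_{\M})$ silently uses it), and the equality $\ud h=\mu$ involves the routine identification of a measure on $[0,T]$ with its extension by zero to $I$.
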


\begin{proof}
Theorem~3.30 in~\cite{MR1857292}.
\end{proof}

Let us now prove lemma~\ref{existadj}:

\begin{proof}[Proof of lemma~\ref{existadj}]
By \eqref{yooo}, a solution in $\cP$ of~\eqref{adjfinal} is any $p \in L^1(0,T;\R^{n*})$ such that, for a.e. $t \in [0,T]$,
\begin{equation}
  p_t  = D_{y_2}\Phi[\Psi](y_0,y_T) + \int_t^T D_y H[p](s,u_s,y_s)\ud s + \int_{[t,T]} \ud \eta_s g'(y_s) .
\end{equation}
 We define $ \Theta \colon L^1(0,T;\R^{n*}) \rightarrow L^1(0,T;\R^{n*})$ by
 \begin{equation}
    \Theta(p)_t := D_{y_2}\Phi[\Psi](y_0,y_T) + \int_t^T D_y H[p](s,u_s,y_s)\ud s + \int_{[t,T]} \ud \eta_s g'(y_s)
 \end{equation}
for a.e. $t \in [0,T]$, and we show that $\Theta$ has a unique fixed point.
Let $C>0$ such that $\| D_y f \|_\infty ,\| D^2_{y,\tau} f \|_\infty \le C$ along $(u,y)$.
\begin{align*}
\left| \Theta(p_1)_t - \Theta(p_2)_t \right|& =  \left|\int_t^T \left( D_y H[p_1](s,u_s,y_s)- D_y H[p_2](s,u_s,y_s) \right) \ud s \right| \\
& \le  C \int_t^T \left[ |p_1(s)-p_2(s) | + \int_s^T |p_1(\theta) - p_2(\theta) | \ud \theta \right] \ud s \\
& =  C \int_t^T \left[ |p_1(s)-p_2(s) | + \int_t^s |p_1(s) - p_2(s) | \ud \theta \right] \ud s \\
& \le  C(1+T) \int_t^T  |p_1(s)-p_2(s) | \ud s .
\end{align*}
We consider the family of equivalent norms on $L^1(0,T;\R^{n*})$
\begin{equation}
 \| v \|_{1,K} := \| t \mapsto e^{-K(T-t)}v(t) \|_1 \quad (K \ge 0).
\end{equation}
\begin{align*}
  \|  \Theta(p_1) - \Theta(p_2) \|_{1,K} & \le  C(1+T) \int_0^T \int_t^T e^{-K(T-t)} |p_1(s)-p_2(s) |\ud s \ud t \\
  & =  C(1+T) \int_0^T e^{-K(T-s)} |p_1(s)-p_2(s)|  \left[ \int_0^s e^{K(t-s)} \ud t \right] \ud s \\
  & \le \frac{C(1+T)}{K} \|  p_1 -p_2 \|_{1,K}.
\end{align*}
For $K$ big enough $\Theta$ is a contraction on $L^1(0,T;\R^{n*})$ for $ \| \cdot \|_{1,K} $;
its unique fixed point is the unique solution of~\eqref{adjfinal}.
\end{proof}

Another useful result is the following integration by parts formula:
\begin{ipp} \label{ipp} 
 Let $h,k \in BV\left( [0,T] \right)$. Then $h^l \in L^1(\ud k)$, $k^r \in L^1(\ud h)$ and
 \begin{equation} \label{ippform}
    \int_{[0,T]} h^l \ud k+ \int_{[0,T]} k^r \ud h = h_{T_+}k_{T_+}- h_{0_-}k_{0_-} .
 \end{equation}
\end{ipp}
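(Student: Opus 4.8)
The plan is to obtain \eqref{ippform} from Fubini's theorem applied to the product of the finite signed measures $\ud h$ and $\ud k$ on the square $[0,T]^2$.

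First I would record the integrability statement. By Lemma~\ref{out}, $h^l_t = h_{0_-} + \ud h([0,t))$ is left continuous, hence Borel, and bounded by $|h_{0_-}| + |\ud h|([0,T])$; likewise $k^r_t = k_{0_-} + \ud k([0,t])$ is right continuous, Borel, and bounded. A bounded Borel function is integrable against any finite signed Radon measure, so $h^l \in L^1(\ud k)$ and $k^r \in L^1(\ud h)$, which takes care of the first two assertions.

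Next, I would form the product measure $\ud h \otimes \ud k$ on $[0,T]^2$ (defined via the Jordan decompositions of $\ud h$ and $\ud k$, so that the classical Fubini theorem applies to each of the four resulting positive finite product measures). Its total mass is $\big(\ud h([0,T])\big)\big(\ud k([0,T])\big) = (h_{T_+} - h_{0_-})(k_{T_+} - k_{0_-})$, again by Lemma~\ref{out}. Then decompose the square as the disjoint union $[0,T]^2 = A \sqcup B$ with $A = \{(s,t) : s < t\}$ and $B = \{(s,t) : s \ge t\}$. Integrating over $A$ in the order $\ud h_s$ then $\ud k_t$ gives $\int_{[0,T]} \ud h([0,t))\,\ud k_t = \int_{[0,T]}(h^l_t - h_{0_-})\,\ud k_t$, while integrating over $B$ in the order $\ud k_t$ then $\ud h_s$ gives $\int_{[0,T]} \ud k([0,s])\,\ud h_s = \int_{[0,T]}(k^r_s - k_{0_-})\,\ud h_s$. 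The asymmetry between the strict inequality defining $A$ and the non-strict one defining $B$ is exactly what makes the left-continuous representative $h^l$ (corresponding to $[0,t)$) appear against $\ud k$ and the right-continuous representative $k^r$ (corresponding to $[0,s]$) appear against $\ud h$.

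Finally I would add the two contributions, use $\ud k([0,T]) = k_{T_+} - k_{0_-}$ and $\ud h([0,T]) = h_{T_+} - h_{0_-}$ to expand $\int h^l_{0_-}\,\ud k$ and $\int k^r_{0_-}\,\ud h$, and set the sum equal to the total mass $(h_{T_+} - h_{0_-})(k_{T_+} - k_{0_-})$ computed above; the cross terms in $h_{0_-}$ and $k_{0_-}$ cancel, leaving \eqref{ippform}. The only delicate point is the bookkeeping of half-open versus closed intervals together with the matching left/right continuous representatives, but once the decomposition $A \sqcup B$ is fixed with the correct strictness this is a routine computation.
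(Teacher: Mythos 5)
Your proof is correct and follows essentially the same route as the paper: Fubini's theorem for the product measure $\ud h\otimes\ud k$ on the square, with the half-open versus closed inner intervals producing exactly the representatives $h^l$ and $k^r$ from lemma~\ref{out}. The only difference is bookkeeping: the paper computes the measure of the single region $\{0\le y\le x\le T\}$ by two iterated integrals and equates them, while you split the square into the two complementary regions $\{s<t\}$ and $\{s\ge t\}$, integrate each once, and sum to the total mass $(h_{T_+}-h_{0_-})(k_{T_+}-k_{0_-})$ --- an equivalent computation.
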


 \begin{proof}
Let $\Omega:=\{0\le y\le x\le T \}$. Since $\chi_\Omega \in L^1(\ud h \otimes \ud k)$, we have by Fubini's Theorem (Theorem~7.27 in~\cite{MR1681462})
and lemma~\ref{out} that
$h^l \in L^1(\ud k)$, $k^r \in L^1(\ud h)$ and we can compute $\ud h\otimes \ud k(\Omega)$ in two different ways:
\begin{align*}
 \ud h\otimes \ud k(\Omega) & = \int_{[0,T]} \int_{[y,T]}\ud h_x \ud k_y \\
 & = \int_{[0,T]}\left( h_{T_+}-h^l_y \right) \ud k_y \\
 & = h_{T_+}\left( k_{T_+}-k_{0_-} \right) - \int_{[0,T]}h^l_y \ud k_y , \\
 \ud h\otimes \ud k(\Omega) & = \int_{[0,T]} \int_{[0,x]}\ud k_y \ud h_x \\
 & = \int_{[0,T]} k^r_x \ud h_x - k_{0_-}\left( h_{T_+}-h_{0_-} \right) .
\end{align*}
 \end{proof}

\subsection{The hidden use of assumption 3} \label{ass3}

We use \textbf{(A3)} to prove lemma~\ref{resol} (and then lemma~\ref{bigresult}, and then \ldots)
through the following:

\begin{invers} \label{invers}
Recall that $M_t:= D_{\tilde u}G^{(q)}_{I^{\e_0}(t)}(t,\bar u_t,\bar y_t,\bar u,\bar y) 
\in \R^{|I^{\e_0}(t)|} \times \R^{m*}$.
Then for all $t \in [0,T]$, $M_tM_t^T$ is invertible and $|\left( M_tM_t^T\right) ^{-1}| \le \gamma^{-2}$.
\end{invers}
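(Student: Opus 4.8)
\emph{Proof idea.} The plan is to reduce the statement to a one-line application of \textbf{(A3)} together with the Cauchy--Schwarz inequality, exploiting that $M_tM_t^T$ is symmetric. First I would note the bookkeeping point that the matrix $M_t$ of this lemma is, by the very definitions in the \textbf{Notations} paragraph, exactly $M_t^{\e_0}$ (since $I^{\e_0}(t)$ denotes the set $I_t^{\e_0}$), so that \textbf{(A3)} applies verbatim. Next I would observe that $M_tM_t^T$ is a symmetric matrix of size $|I^{\e_0}(t)| \times |I^{\e_0}(t)|$, and that for every $\xi \in \R^{|I^{\e_0}(t)|}$,
$\xi^T (M_tM_t^T)\xi = |M_t^T\xi|^2 \ge \gamma^2 |\xi|^2$
by \textbf{(A3)}. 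In particular $M_tM_t^T$ is positive definite, hence invertible (the case $|I^{\e_0}(t)|=0$ being vacuous).

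For the norm estimate I would fix $\xi$, set $\zeta := (M_tM_t^T)^{-1}\xi$, and discard the trivial case $\zeta = 0$. Writing $\xi = (M_tM_t^T)\zeta$ and applying Cauchy--Schwarz,
$|\xi|\,|\zeta| \ge \zeta^T\xi = \zeta^T (M_tM_t^T)\zeta \ge \gamma^2 |\zeta|^2$,
so $|\zeta| \le \gamma^{-2}|\xi|$. Taking the supremum over unit $\xi$ yields $|(M_tM_t^T)^{-1}| \le \gamma^{-2}$ in the operator norm subordinate to the Euclidean norms, which is the assertion. (Equivalently: the first displayed inequality says $M_tM_t^T$ is symmetric with smallest eigenvalue at least $\gamma^2$, so its inverse has largest eigenvalue at most $\gamma^{-2}$.)

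I do not expect any real obstacle here; this is routine linear algebra. The only two points deserving a word of care are the identification $M_t = M_t^{\e_0}$ so that \textbf{(A3)} is literally applicable, and the (implicit) convention that $|\cdot|$ applied to a matrix means the operator norm associated with the Euclidean norms — which is the only reading making the constant $\gamma^{-2}$ correct.
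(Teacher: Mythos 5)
Your proof is correct and follows essentially the same route as the paper: the bound $\xi^T(M_tM_t^T)\xi = |M_t^T\xi|^2 \ge \gamma^2|\xi|^2$ from \textbf{(A3)} for invertibility, and the same Cauchy--Schwarz argument applied to $\zeta = (M_tM_t^T)^{-1}\xi$ for the estimate $|(M_tM_t^T)^{-1}| \le \gamma^{-2}$. No discrepancies worth noting.
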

\begin{proof}
For any $x \in \R^{|I^{\e_0}(t)|}$,
\begin{equation*}
 \left\langle  M_t M_t^T x,x\right\rangle  =|M_t^Tx|^2 \ge \gamma^2 |x|^2.
\end{equation*}
Then $M_t M_t^T x =0 $ implies $x=0$ and the invertibility follows.

Let $y \in \R^{|I^{\e_0}(t)|}$ and $x:=\left( M_tM_t^T\right) ^{-1}y$.
\begin{equation*}
 |y||x| \ge \left\langle y,x\right\rangle  = \left\langle M_tM_t^Tx,x\right\rangle = |M_t^Tx|^2 \ge \gamma^2 |x|^2 .
\end{equation*}
For $y \ne 0$, we have $x \ne 0$; dividing the previous inequality by $|x|$, we get
\begin{equation*}
  \gamma^2 \left|\left( M_tM_t^T\right) ^{-1}y \right| \le |y| .
\end{equation*}
The result follows.
\end{proof}

Before we prove lemma~\ref{resol}, we define the truncation of an integrable function:
\begin{truncation} \label{truncation}
Given any $\phi \in L^s(J)$ ($s \in [1,\infty)$ and $J$ interval),
we will call \emph{truncation of $\phi$} the sequence $\phi^k\in L^\infty(J)$
defined for $k \in \mathbb N$ and a.a. $t \in J$ by
\begin{equation*}
  \phi^k_t := \begin{cases}
        \phi_t  & \text{if }|\phi_t| \le k,\\
	 k \dfrac{\phi_t}{|\phi_t|} & \text{otherwise.} 
          \end{cases}
\end{equation*}
Observe that $ \phi^k \xrightarrow[k \to  \infty ]{L^s} \phi$.
\end{truncation}

\begin{proof}[Proof of lemma~\ref{resol}]
 In the sequel we omit $z_0$ in the notations.

\noindent
\textit{(i)}
Let $v \in \V_s$. We claim that $v$ satisfies
\begin{equation} \label{up}
 M_t v_t + N_t \left( z[v]_t,v,z[v]\right) = h_t \ \text{for a.a. } t \in J_l
\end{equation}
\textit{iff} there exists $w \in L^s(J_l;\R^m)$ such that $(v,w)$ satisfies
\begin{equation} \label{down}
 \left\{ \ba{l}
M_t w_t = 0 , \\
v_t  = M_t^T \left( M_t M_t^T \right)^{-1} \left( h_t - N_t (z[v]_t,v,z[v]) \right) + w_t, 
\ea \right. 
\ \text{for a.a. } t \in J_l.
\end{equation}
Clearly, if $(v,w)$ satisfies \eqref{down}, then $v$ satisfies \eqref{up}.
Conversly, suppose that $v$ satisfies \eqref{up}.
With lemma~\ref{invers} in mind, we define $\alpha \in L^s(J_l;\R^{|I_l|})$ and $w\in L^s(J_l;\R^m)$ by
\begin{gather*}
\alpha := \left( M M^T\right)^{-1}M v ,\\
w:= \left(I_m -M^T \left( M M^T\right)^{-1}M\right)  v .
\end{gather*}
Then
\begin{equation} \label{deri}
 \left\{ \ba{l}
Mw =0 , \\
v = M^T  \alpha + w , 
\ea \right.
\ \text{on } J_l .
\end{equation}
We derive from \eqref{up} and \eqref{deri} that
\begin{equation*}
 M_tM_t^T \alpha_t + N_t \left( z[v]_t,v,z[v]\right) = h_t \ \text{for a.a. } t \in J_l .
\end{equation*}
Using again lemma~\ref{invers} and \eqref{deri}, we get \eqref{down}.

\bigskip
\noindent
\textit{(ii)}
Given $(v,h,w) \in \V_s \times L^s(J_l;\R^{|I_l|}) \times L^s(J_l;\R^m)$,
there exists a unique $\vt \in \V_s$ such that
\begin{equation} \label{wellposed}
 \left\{ \ba{l}
\vt = v \ \text{on } J_0 \cup \cdots \cup J_{l-1} \cup J_{l+1} \cup \cdots \cup J_\kappa , \\
\vt_t  = M_t^T \left( M_t M_t^T \right)^{-1} \left( h_t - N_t (z[\vt]_t,\vt,z[\vt]) \right) + w_t \ \text{for a.a. } t \in J_l,
\ea \right. 
\end{equation}
Indeed, one can define a mapping from $\V_s$ to $\V_s$, using the right-hand side of \eqref{wellposed}.
Then it can be shown, as in the proof of lemma~\ref{existadj}, that this mapping is a contraction for a well-suited norm,
using lemmas~\ref{estimsobo},~\ref{interest} and~\ref{invers}.
The existence and uniqueness follow.
Moreover, a version of the contraction mapping theorem with parameter (see e.g. Th\'eor\`eme~21-5 in \cite{MR0253266})
shows that $\vt$ depends continuously on $(v,h,w)$.

\bigskip
\noindent
\textit{(iii)}
Let us prove a):
let $(\hb,v) \in L^s(J_l;\R^{|I_l|}) \times \V_s$ and let $w:= 0$.
Let $\vt \in \V_s$ be the unique solution of \eqref{wellposed} for $(v,\hb,w)$.
Then $\vt$ is a solution of \eqref{efb} by \textit{(i)}.

\bigskip
\noindent
\textit{(iv)}
Let us prove b):
let $(\hb,\vb) \in L^s(J_l;\R^{|I_l|}) \times \V_s$ as in the statement and let $\wb$ be given by \textit{(i)}.
Then $\vb$ is the unique solution of \eqref{wellposed} for $(\vb,\hb,\wb)$.

Let $(h^k,v^k) \in L^\infty(J_l; \R^{|I_l|}) \times \U$, $k \in \NN$, be such that
$(h^k,v^k)\xrightarrow[]{L^s\times L^s} (\hb,\vb)$ and let $w^k \in L^\infty(J_l;\R^m)$, $k \in \NN$, be the truncation of $\wb$.
It is obvious from definition~\ref{truncation} that
\begin{equation*}
 M_t w^k_t = 0 \ \text{for a.a. } t \in J_l .
\end{equation*}
Let $\vt^k \in \U$ be the unique solution of \eqref{wellposed} for $(v^k,h^k,w^k)$, $k \in \NN$.
Then by uniqueness and continuity in \textit{(ii)},
\begin{equation}
 \vt^k \xrightarrow[]{L^s} \vb.
\end{equation}
And $\vt^k$ is a solution of \eqref{hfb} by \textit{(i)}.
\end{proof}

\subsection{Approximations in $W^{q,2}$} \label{noass}
We will prove in this section lemmas~\ref{raccords} and~\ref{density}.
First we give the statement and the proof of a general result:

\begin{rempl} \label{rempl}
 Let $\xh \in W^{q,2}([0,1])$.
For $j=0,\ldots,q-1$, we denote

\begin{equation}
 \left\{ \ba{l}
\hat \alpha_j := \xh^{(j)}(0) , \\
\hat \beta_j := \xh^{(j)}(1) ,
\ea \right.
\end{equation}
and we consider $\alpha_j^k, \beta_j^k \in \R^q$, $k\in \NN$, such that $(\alpha_j^k ,\beta_j^k) \longrightarrow (\hat \alpha_j,\hat \beta_j)$.
Then there exists $x^k \in W^{q,\infty}([0,1])$, $k \in \NN$, such that $x^k \xrightarrow[]{W^{q,2}} \xh$ and, for $j=0,\ldots,q-1$,
\begin{equation} \label{ifsat}
 \left\{ \ba{l}
 (x^k)^{(j)}(0) = \alpha^k_j, \\
 (x^k)^{(j)}(1) = \beta^k_j.
\ea \right.
\end{equation}

\end{rempl}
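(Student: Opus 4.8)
The plan is to prove this by a two-stage argument: first approximate $\hat x$ in $W^{q,2}([0,1])$ by essentially bounded (indeed smooth) functions, ignoring the boundary jets, and then add a small correction term, built by interpolation, that restores the prescribed values $\alpha^k_j$ at $0$ and $\beta^k_j$ at $1$ of the derivatives up to order $q-1$ without spoiling the $W^{q,2}$-convergence.

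First I would invoke density of $C^\infty([0,1])$ in $W^{q,2}([0,1])$ to pick $y^k \in W^{q,\infty}([0,1])$ with $y^k \xrightarrow[]{W^{q,2}} \hat x$. The key point is the continuous embedding $W^{q,2}([0,1]) \hookrightarrow C^{q-1}([0,1])$, obtained by iterating $W^{1,2}([0,1]) \hookrightarrow C([0,1])$ on the bounded interval: it gives $\| w^{(j)} \|_\infty \le C \| w \|_{q,2}$ for $0 \le j \le q-1$, hence $y^k{}^{(j)} \to \hat x^{(j)}$ uniformly on $[0,1]$ for $j = 0,\ldots,q-1$. Consequently, setting $a_j^k := \alpha_j^k - y^k{}^{(j)}(0)$ and $b_j^k := \beta_j^k - y^k{}^{(j)}(1)$, we have $a_j^k \to \hat\alpha_j - \hat x^{(j)}(0) = 0$ and $b_j^k \to \hat\beta_j - \hat x^{(j)}(1) = 0$ for $j = 0,\ldots,q-1$.

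Next I would fix once and for all $\chi_0 \in C^\infty([0,1])$ with $\chi_0 \equiv 1$ on a neighbourhood of $0$ and $\chi_0 \equiv 0$ on a neighbourhood of $1$, set $\chi_1 := 1-\chi_0$, and define the correction
\[
 z^k(t) := \chi_0(t) \sum_{j=0}^{q-1} a_j^k \frac{t^j}{j!} + \chi_1(t) \sum_{j=0}^{q-1} b_j^k \frac{(t-1)^j}{j!}.
\]
Then $z^k \in W^{q,\infty}([0,1])$; near $0$ it coincides with $\sum_j a_j^k t^j/j!$ and near $1$ with $\sum_j b_j^k (t-1)^j/j!$, so $z^k{}^{(j)}(0) = a_j^k$ and $z^k{}^{(j)}(1) = b_j^k$ for $j=0,\ldots,q-1$; and $\| z^k \|_{q,\infty} \le C \max_j \big( |a_j^k| + |b_j^k| \big) \to 0$, with $C$ depending only on $q$ and the fixed cutoffs (one could equally use the Hermite interpolation polynomial of degree $\le 2q-1$ with these jet data, whose coefficients depend linearly and boundedly on the data). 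Putting $x^k := y^k + z^k \in W^{q,\infty}([0,1])$, we get $x^k \xrightarrow[]{W^{q,2}} \hat x$ since $\| z^k \|_{q,2} \le \sqrt{q+1}\,\| z^k \|_{q,\infty} \to 0$, and for $j = 0,\ldots,q-1$,
\[
 (x^k)^{(j)}(0) = y^k{}^{(j)}(0) + a_j^k = \alpha_j^k, \qquad (x^k)^{(j)}(1) = y^k{}^{(j)}(1) + b_j^k = \beta_j^k,
\]
which is exactly \eqref{ifsat}. I do not expect any genuinely hard step here; the only point deserving care is the remark that, by the Sobolev embedding above, the endpoint jets of the smooth approximants $y^k$ converge to those of $\hat x$ — this is what makes the correction term $z^k$ small and hence harmless for the $W^{q,2}$-convergence.
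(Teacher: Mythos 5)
Your proof is correct, but it takes a genuinely different route from the paper. The paper works at the level of the top derivative: it writes $\xh$ as a Taylor polynomial at $0$ plus the $q$-fold primitive $x_{\uh}$ of $\uh:=\xh^{(q)}$, observes that the endpoint conditions at $1$ become finitely many linear constraints $\langle a_j,u\rangle_{L^2}=\gamma_j$ with $a_j(t)=(1-t)^{q-1-j}/(q-1-j)!$, and then takes $u^k$ to be the truncation of $\uh$ (which gives $x^k\in W^{q,\infty}$) corrected by the finite-rank least-squares term $A(A^*A)^{-1}(\gamma^k-A^*\uh^k)$ so that the constraints are met exactly; convergence of $u^k$ to $\uh$ in $L^2$ then gives $W^{q,2}$-convergence. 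You instead smooth $\xh$ globally by density of $C^\infty$ in $W^{q,2}$, use the embedding $W^{q,2}([0,1])\hookrightarrow C^{q-1}([0,1])$ to see that the boundary jets of the approximants converge to those of $\xh$ (hence the discrepancies $a_j^k$, $b_j^k$ with the prescribed data tend to $0$), and restore the exact jets by a cutoff-times-Taylor (or Hermite) correction whose $W^{q,\infty}$ norm is controlled linearly by these discrepancies. Both arguments are sound; yours is more elementary and standard, localizing the repair at the two endpoints and making the Sobolev embedding the only analytic ingredient, while the paper's version is tailored to its surrounding machinery: it reuses the truncation device of definition~\ref{truncation} and the primitive map $x_u$, which also serves in lemma~\ref{rempl2}, and it never needs to invoke the embedding beyond the mere existence of the endpoint values.
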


\begin{proof}

 Given $u \in L^2([0,1])$, we define $x_u \in W^{q,2}([0,1])$ by
\begin{equation*}
 x_u(t):= \int_0^t \int_0^{s_1} \cdots \int_0^{s_{q-1}} u(s_q) \ud s_q \ud s_{q-1} \cdots \ud s_1 ,\ t \in [0,1].
\end{equation*}
Then $x_u^{(q)} = u$ and, for $j=0,\ldots,q-1$,
\begin{equation*}
 x_u^{(j)}(1) = \gamma_j \ \Longleftrightarrow \ \langle a_j ,u \rangle_{L^2} = \gamma_j
\end{equation*}
where $a_j \in C([0,1])$ is defined by
\begin{equation*}
 a_j(t) := \frac{(1-t)^{q-1-j}}{(q-1-j)!} , \ t \in [0,1].
\end{equation*}
Indeed, a straightforward induction shows that
\[
  x_u^{(j)}(1) = \int_0^t \int_0^{s_{j+1}} \cdots \int_0^{s_{q-1}} u(s_q) \ud s_q \ud s_{q-1} \cdots \ud s_{j+1}.
\]
Then integrations by parts give the expression of the $a_j$.
Note that the $a_j$ ($j=0, \ldots ,q-1$) are linearly independent in $L^2([0,1])$. Then
\begin{equation*}
\ba{rccc}
 A \colon & \R^q&\longrightarrow &L^2([0,1]) \\
 &\begin{pmatrix}
\lambda_0\\
\vdots\\
\lambda_{q-1}
\end{pmatrix} &\longmapsto & 
\displaystyle \sum_{j=0}^{q-1} \lambda_j a_j
\ea
\end{equation*}
is such that $A^* A$ is invertible ($A^*$ is here the adjoint operator). And
\begin{equation} \label{quinze}
  x_u^{(j)}(1) = \gamma_j, \ j = 0 , \ldots,q-1 \ \Longleftrightarrow \ A^*u= (\gamma_0,\ldots,\gamma_{q-1})^T .
\end{equation}

Going back to the lemma, let $\uh := \xh^{(q)} \in L^2([0,1])$.
Observe that
\begin{equation*}
 \xh(t) = \sum_{l=0}^{q-1}  \frac{\hat \alpha_l}{l !}t^l + x_{\uh}(t), \ t \in [0,1],
\end{equation*}
and that $A^* \uh = (\hat \gamma_0,\ldots,\hat \gamma_{q-1})^T$ where
\begin{equation*}
 \hat \gamma_j := \hat \beta_j - \sum_{l=j}^{q-1} \frac{\hat \alpha_l}{(l-j)!} , \ j = 0,\ldots,q-1 .
\end{equation*}
Then we consider, for $k \in \NN$, the truncation (definition~\ref{truncation}) $\uh^k \in L^\infty([0,1])$ of $\uh$, and
\begin{gather}
 \gamma_j^k := \beta_j^k - \sum_{l=j}^{q-1} \frac{\alpha_l^k}{(l-j)!} , \ j = 0,\ldots,q-1 , \label{huit}\\
\gamma^k := (\gamma_0^k,\ldots,\gamma_{q-1}^k)^T  , \nonumber\\
u^k:= \uh^k + A (A^*A)^{-1} \left( \gamma^k - A^*\uh^k \right)  ,\nonumber \\
x^k(t) := \sum_{l=0}^{q-1}  \frac{\alpha^k_l}{l !}t^l + x_{u^k}(t) , \ t \in [0,1] . \label{unvg}
\end{gather}
It is clear that $u^k \in L^\infty([0,1])$ (by definition of $A$); then $x^k \in W^{q,\infty}([0,T])$.
Since $A^*u^k = \gamma^k$ and in view of \eqref{quinze}, \eqref{huit} and \eqref{unvg}, \eqref{ifsat} is satisfied.
Finally, $\gamma^k_j \longrightarrow \hat \gamma_j$ ($j=0,\ldots,q-1$);
then $\gamma^k \longrightarrow A^* \uh$ and $u^k \longrightarrow \uh$.

\end{proof}

% 
% Similarly we can prove the following:
% 
% \begin{rempl2}
%  Let $\xh \in W^{q,2}([0,1])$ be such that $\xh^{(j)}(0)=0$ for $j=0,\ldots, q-1$.
% Then there exists $x^\delta \in W^{q,\infty}([0,1])$ for $\delta >0$ such that $x^\delta \xrightarrow[\delta \to 0]{W^{q,2}} \xh$ and:
% \begin{equation*}
%  \left\{ \ba{l}
% x^\delta = 0 \ \text{on } [0,\delta] ,\\
% (x^\delta)^{(j)}(1) = \xh^{(j)}(1), \ j=0 ,\ldots,q-1 .
% \ea \right.
% \end{equation*}
% 
% \end{rempl2}

We can also prove the following:

\begin{rempl2} \label{rempl2}
 Let $\xh \in W^{q,2}([0,1])$ be such that $\xh^{(j)}(0)=0$ for $j=0,\ldots, q-1$.
Then there exists $x^\delta \in W^{q,\infty}([0,1])$ for $\delta >0$ such that $x^\delta \xrightarrow[\delta \to 0]{W^{q,2}} \xh$ and
\begin{equation}
x^\delta = 0 \ \text{on } [0,\delta] .
\end{equation}

\end{rempl2}

\begin{proof}
 We consider $u^\delta \in L^\infty([0,1])$, $\delta >0$, such that $u^\delta = 0$ on $[0,\delta]$ and
 $u^\delta \xrightarrow[\delta \to 0]{L^2}\uh :=  \xh^{(q)}$.
 Then we define $x^\delta:=x_{u^\delta}$ (see the previous proof).
\end{proof}

Now the proof of lemma~\ref{density} is straightforward.

\begin{proof}[Proof of lemma~\ref{density}]
 We observe that $\bb_i= 0$ on $\I_i$ implies that $\bb_i^{(j)}=0$ at the end points of $\I_i$ for $j=0,\ldots,q_i-1$
(note that with the definition \eqref{bound}, if one component of $\I_i$ is a singleton, then $q_i=1$).
Then the conclusion follows with lemma~\ref{rempl2} applied on each component of $\I_i^\e \setminus \I_i$.  
\end{proof}

Finally, we use lemma~\ref{rempl} to prove lemma~\ref{raccords}.

\begin{proof}[Proof of lemma~\ref{raccords}]
In the sequel we omit $z_0$ in the notations.
We define a \emph{connection in $W^{q,\infty}$} between $\psi_1$ at $t_1$ and $\psi_2$ at $t_2$
as any $\psi \in W^{q,\infty}([t_1,t_2])$ such that
\begin{equation*}
  \left\{ \ba{l}
\psi^{(j)}(t_1) = \psi_1^{(j)}(t_1) , \\
\psi^{(j)}(t_2) = \psi_2^{(j)}(t_2) ,
\ea \right.
\ j=0,\ldots,q-1 .
\end{equation*}

\noindent a)
We define $\bt_i$ on $[0,t_0]$ by $\bt_i := g_i'(\yb)z[v]$, $\ i = 1, \ldots,r$.
We need to explain how we define $\bt_i$ on $(t_0,T]$, using $\bb_i$ and connections,
to have $\bt_i \in W^{q_i,s}([0,T])$ and $\bt_i=\bb_i$ on each component of $\I_i^\e \cap (t_0,T]$.
The construction is slightly different whether $t_0 \in \I_i^\e$ or not, i.e. whether $i \in I^\e_{t_0}$ or not.
Note that by definition of $\e_0$ and of $t_0$, $I^\e_t$ is constant for $t$ in a neighbourhood of $t_0$.
We now distinguish the 2 cases just mentioned:
\begin{enumerate}
 \item $i \in I^\e_{t_0}$:
We denote by $[t_1,t_2]$ the connected component of $\I_i^{\e}$ such that $t_0 \in (t_1,t_2)$.
We derive from \eqref{hypa1} that $\bt_i= \bb_i$ on $[t_1,t_0]$.
Then we define $\bt_i : = \bb_i $ on $(t_0,t_2]$.

If $\I_i^{\e}$ has another component in $(t_2,T]$, we denote the first one by $[t_1',t_2']$.
Let $\psi$ be a connection in $W^{q_i,\infty}$ between $\bt_i$ at $t_2$ to $\bb_i$ at $t_1'$.
We define $\bt_i:= \psi$ on $(t_2,t_1')$, $\bt_i:=\bb_i$ on $[t_1',t_2']$, and so forth on $(t_2',T]$.

If $\I_i^{\e}$ has no more component, we define $\bt_i$ on what is left as a connection in $W^{q_i,\infty}$ between $\bb_i$
and $g_i'(\yb)z[v]$ at $T$.

 \item $i \not \in I^\e_{t_0}$: 
If $\I_i^{\e}$ has a component in $[t_0,T]$, we denote the first one by $[t_1,t_2]$.
Note that $t_1 - t_0 \ge \e_0 -\e >0$.
We consider a connection in $W^{q_i,\infty}$ between $\bt_i$ at $t_0$ and $\bb_i$ at $t_1$ and we continue as in 1.

If $\I_i^{\e}$ has no component in $[t_0,T]$, we do as in 1.
\end{enumerate}

\noindent b)
For all $k \in \NN$, we apply a) to $(b^k,v^k)$ and we get $\bt^k$. We just need to explain how we can get, for $\ i = 1, \ldots,r $,
\begin{equation*}
 \bt^k_i \xrightarrow[k \to \infty]{W^{q_i,2}} g_i'(\yb)z[\vb].
\end{equation*}
By construction we have
\begin{equation*}
 \ba{ll}
\text{on } [0,t_0],& \bt^k_i =g_i'(\yb)z[v^k] \longrightarrow g_i'(\yb)z[\vb] ,\\
\text{on } \I_i^\e,&\bt^k_i = b^k_i \longrightarrow  \bb_i = g_i'(\yb)z[\vb] .
\ea
\end{equation*}
Then it is enough to show that every connection which appears when we apply a) to $(b^k,v^k)$,
for example $\psi_i^k \in W^{q_i,\infty}([t_1,t_2])$, can be chosen in such a way that
\begin{equation*}
 \psi_i^k \longrightarrow g_i'(\yb)z[\vb]  \ \text{on } [t_1,t_2].
\end{equation*}
This is possible by lemma~\ref{rempl}.
\end{proof}

% Bibliography
%  \newpage

% \bibliographystyle{plain}
\bibliographystyle{abbrv}
\bibliography{integraleqns}

\end{document}